% \newif\iffinal
% \finalfalse	% Not final version
% %\finaltrue	% Final version

\documentclass[letterpaper,11pt,reqno]{amsart} 
\RequirePackage[OT1]{fontenc}
\usepackage[portrait,margin=3cm]{geometry}  
% \iffinal\else
% %\usepackage[notref,notcite]{showkeys}
% \fi
\usepackage{mathrsfs}
\usepackage[colorlinks,citecolor=blue,urlcolor=blue,linkcolor=blue,pagecolor=blue,linktocpage=true,backref=true]%
{hyperref}
\usepackage[foot]{amsaddr}
\usepackage{amssymb,amsthm,amsfonts,amsbsy,latexsym,dsfont}
\usepackage{graphicx}
\usepackage[numeric,initials,nobysame]{amsrefs}
\usepackage{upref,setspace}

\usepackage{enumerate}
\newenvironment{enumeratei}{\begin{enumerate}[\upshape (i)]}{\end{enumerate}}
\newenvironment{enumeratea}{\begin{enumerate}[\upshape (a)]}{\end{enumerate}}
\newenvironment{enumeraten}{\begin{enumerate}[\upshape 1.]}{\end{enumerate}}

\numberwithin{equation}{section}
\numberwithin{figure}{section}
\numberwithin{table}{section}

\sloppy

%\usepackage{pdfsync}
% \iffinal
% %\usepackage{verbatim}
% \else
%\usepackage[notref,notcite]{showkeys}
%\iffinal\else\usepackage[notref,notcite]{showkeys}\fi
%\else\usepackage[notref]{showkeys}

%\fi

%\usepackage{pslatex}
\usepackage{color}              % Need the color package
\usepackage{graphicx}
\usepackage[]{amsmath}
\usepackage{amssymb}
\usepackage{hyperref}
\usepackage{amsfonts}
\definecolor{MyDarkBlue}{rgb}{0,0.08,0.50}
\definecolor{BrickRed}{rgb}{0.65,0.08,0}
\usepackage[textsize=small]{todonotes}       % includes TO DO LIST
\usepackage{amsmath, amsthm, amssymb}
%% notation added by Xuan 12/03/2011

\newcommand{\bfw}{ {\mathbf{w}}}
\newcommand{\mirror}{ \Gamma }

\newcommand{\BS}{{\bf{BSR}}}

%%  new notation for BF paper

\newcommand{\BF}{{\bf {BF}}}

\newcommand{\BM}{{\mbox{BM}}}

\newcommand{\DD}{\mathcal{D}}
\newcommand{\CC}{\mathcal{C}}

\newcommand{\ompar}{\varpi}

\newcommand{\vol}{\mbox{vol}}
\newcommand{\Imm}{\mbox{Imm}}

\newcommand{\clc}{\mathcal{C}}

\newcommand{\clv}{\mathcal{V}}
\newcommand{\clg}{\mathcal{G}}

\newcommand{\clb}{\mathcal{B}}
%\texttt{}\newcommand{\clt}{\mathcal{T}}

\newcommand{\KK}{\mathcal{K}}
\newcommand{\VV}{\mathcal{V}}
\newcommand{\NNN}{\mathbb{N}}

\newcommand{\RRR}{\mathbb{R}}

\newcommand{\cla}{\mathcal{A}}

\newcommand{\RG}{{\bf{RG}}}

\newcommand{\XX}{\mathcal{X}}
\newcommand{\bfx}{{\bf{x}}}
\newcommand{\bfy}{{\bf{y}}}
\newcommand{\bfv}{{\bf{v}}}
\newcommand{\bfG}{{\bf{G}}}
\newcommand{\bfg}{{\bf{g}}}

\newcommand{\be}{\begin{equation}}
\newcommand{\ee}{\end{equation}}
\newcommand{\beq}{\begin{eqnarray*}}
\newcommand{\eeq}{\end{eqnarray*}}

\newcommand{\beqn}{\begin{eqnarray}}
\newcommand{\eeqn}{\end{eqnarray}}

\newcommand{\ba}{\begin{aligned}}
\newcommand{\ea}{\end{aligned}}
\newcommand{\bes}{\begin{equation*}}
\newcommand{\ees}{\end{equation*}}
%% notation for BF paper

\newtheorem{Lemma}{Lemma}[section]
\newtheorem{Proposition}[Lemma]{Proposition}

\newtheorem{Theorem}[Lemma]{Theorem}

\theoremstyle{definition}

\newtheorem{Corollary}[Lemma]{Corollary}

\theoremstyle{definition}

\newcommand{\EE}{\mathcal{E}}

\newcommand{\prob}{\mathbb{P}}
\newcommand{\PP}{\mathcal{P}}
\newcommand{\E}{\mathbb{E}}

\newcommand{\calA}{\mathcal{A}}

\newcommand{\FF}{\mathcal{F}}

\newcommand{\WW}{\mathcal{W}}
\newcommand{\eps}{\varepsilon}

\newcommand{\set}[1]{\left\{#1\right\}}

\newcommand{\Rbold}{{\mathbb{R}}}

\newcommand{\ind}[2]{1_{(e \in \pi(#1,#2))}}

\newcommand{\bfd}{{\bf d}}
\newcommand{\bfzero}{{\bf 0}}
\def\ind{{\rm 1\hspace{-0.90ex}1}}

\newcommand{\calS}{\mathcal{S}}

% Macros Remco

\newcommand{\sss}{\scriptscriptstyle}

\newcommand{\barx}{\bar{x}}
\def\1{{\mathchoice {1\mskip-4mu\mathrm l}      % Blackboard bold 1
{1\mskip-4mu\mathrm l}
{1\mskip-4.5mu\mathrm l} {1\mskip-5mu\mathrm l}}}

\newcommand {\convd}{\stackrel{d}{\longrightarrow}}
\newcommand {\convp}{\stackrel{\sss {\mathbb P}}{\longrightarrow}}

\newcommand{\erdos}{Erd\H{o}s-R\'enyi }

\setcounter{secnumdepth}{3} %Set the depth of sectioning.
\setcounter{tocdepth}{2}    %Set the depth of table of contents.
\numberwithin{equation}{section}
% End Macros Remco

%Wijzigingen pag. 17-46 van Gerard; begonnen 18 Maart 2009
%laatse wijzigingen toegevoegd door GH op 12 october 2009
%%% Partha%%%

\newcommand{\cB}{\mathcal{B}}
\newcommand{\cD}{\mathcal{D}}
\newcommand{\cI}{\mathcal{I}}
\newcommand{\cK}{\mathcal{K}}

\newcommand{\cQ}{\mathcal{Q}}

\newcommand{\cW}{\mathcal{W}}\newcommand{\cX}{\mathcal{X}}
  
%% End MathCal symbols
%%%%%%%%%%%%%%%%%%%%%%%%%%%%%%%%%%%%%%%%%%%
%%%%%%%%%%%%%%%%%%%%%%%%%%%%%%%%%%%%%%%%%%%
%% Math Boldface Symbols

\newcommand{\vM}{\mathbf{M}}

\newcommand{\va}{\mathbf{a}}\newcommand{\vc}{\mathbf{c}}
\newcommand{\vd}{\mathbf{d}}

%% End Math Boldface Symbols
%%%%%%%%%%%%%%%%%%%%%%%%%%%%%%%%%%%%%%%%%%%
%%%%%%%%%%%%%%%%%%%%%%%%%%%%%%%%%%%%%%%%%%%  
%% Math Bold Symbols commands  

\newcommand{\mvalpha}{\boldsymbol{\alpha}}
\newcommand{\mvgamma}{\boldsymbol{\gamma}}

%% Math Bold Symbols commands  
%%%%%%%%%%%%%%%%%%%%%%%%%%%%%%%%%%%%%%%%%%%
%%%%%%%%%%%%%%%%%%%%%%%%%%%%%%%%%%%%%%%%%%%
%% Math Frakur fonts except for i which is \mfi

\newcommand{\fI}{\mathfrak{I}}

%% End Math Frak                             
%%%%%%%%%%%%%%%%%%%%%%%%%%%%%%%%%%%%%%%%%%% 
%%%%%%%%%%%%%%%%%%%%%%%%%%%%%%%%%%%%%%%%%%%
% Double capital letters

% End Double capital letters      
%%%%%%%%%%%%%%%%%%%%%%%%%%%%%%%%%%%%%%%%%%%
%%%%%%%%%%%%%%%%%%%%%%%%%%%%%%%%%%%%%%%%%%% 
%% Blackboard bold

%% Blackboard bold
%%%%%%%%%%%%%%%%%%%%%%%%%%%%%%%%%%%%%%%%%%%
%%%%%%%%%%%%%%%%%%%%%%%%%%%%%%%%%%%%%%%%%%%
% Roman capital letters

% End Roman capital letters   

\begin{document}
	
	\title[Subcritical bounded-size rules]{Bounded-size rules: The barely subcritical regime}

	\date{\today}
	\subjclass[2000]{Primary: 60C05, 05C80, 90B15}
	\keywords{bounded-size rules, branching processes, phase transition, barely subcritical regime, 
	inhomogeneous random graphs, differential equation method, dynamic random graph models.}

	\author[Bhamidi]{Shankar Bhamidi}
	\address{Department of Statistics and Operations Research, 304 Hanes Hall CB \#3260, University of North Carolina, Chapel Hill, NC 27599}
	\author[Budhiraja]{Amarjit Budhiraja}
	\author[Wang]{Xuan Wang}
	\email{bhamidi@email.unc.edu, budhiraj@email.unc.edu, wangxuan@email.unc.edu}

\begin{abstract}
	Bounded-size rules(BSR) are dynamic random graph processes which incorporate limited choice along with randomness in the evolution of the system. Typically one starts with the empty graph and at each stage  two edges are chosen uniformly at random.  One of the two edges  is then placed into the system according to a decision rule   based on the sizes of the components containing the four vertices.
 For bounded-size rules, all components of size greater than some fixed $K\geq 1$ are accorded the same treatment. Writing $\BS(t)$ for the state of the system with $\lfloor nt/2\rfloor$ edges, Spencer and Wormald \cite{spencer2007birth} proved that for such rules, there exists a (rule dependent) critical time $t_c$ such that when $t< t_c$ the size of the largest component is of order $\log{n}$ while for $t> t_c$, the size of the largest component is of order $n$.
In this work we obtain upper bounds (that hold with high probability) of order $n^{2\gamma} \log ^4 n$, on the size of the largest component, at time instants $t_n = t_c-n^{-\gamma}$, where 
 $\gamma\in (0,1/4)$. This result for the barely subcritical regime forms a key ingredient in the study undertaken in \cite{amc-2012}, of the asymptotic dynamic behavior of the process describing the vector
of component sizes and associated complexity of the components for such random graph models in the critical scaling window.
% in terms of the augmented multiplicative coalescent process.
The proof uses a coupling of  BSR processes with a certain 
family of inhomogeneous random graphs with vertices in the type space $\Rbold_+\times \cD([0,\infty):\NNN_0)$ where $\cD([0,\infty):\NNN_0)$ is the Skorohod $D$-space of functions that are right continuous and have left limits, with values in the space of nonnegative integers $\NNN_0$,  equipped with the usual Skorohod topology. The coupling construction also gives an alternative characterization (than the usual explosion time of the susceptibility function) of the critical time $t_c$ for the emergence of the giant component in terms of the operator norm of integral operators
on certain $L^2$ spaces. 
%The result forms a major component in the analysis of the critical scaling window for such processes in .    % At each step, two candidate edges are chosen uniformly among all possibilities, and only one edge is linked. The decision is made based on the sizes of the four related components, with the additional condition that all components of size greater than some fixed threshold must be treated equally. Consider the bounded-size rule process with $nt/2$ edges, then it displays a similar phase transition as \erdos process at some critical time $t_c$. The critical time was characterized by the exploding time of the limiting susceptibility function. In this paper, we relate the bounded-size rule model to certain inhomogeneous random graph models, and give a novel characterization of the critical time as the time at which certain associate operator has norm one. In addition, we also give a shape upper bound of the largest component size in barely-subcritical case, i.e. at $t_c - n^{-\gamma}$ for $\gamma \in (0,1/4)$.
\end{abstract}

\maketitle

\section{Introduction}
\label{sec:intro}
The classical \erdos random graph can be thought of as a dynamic random graph process on the vertex set $[n]:=\set{1,2,\ldots, n}$, where one starts with the empty graph $\bfzero_n$ (the graph with $n$ vertices and no edges) and at each discrete time step  chooses an edge uniformly at random and places it in the configuration. Denote by ${\bf ER}^{\sss (n)}(t)$ for the state of the graph obtained after $\lfloor nt/2 \rfloor$ steps. For any graph $\bfG$, denote $\CC_i(\bfG)$ for the $i$-th largest component, and $|\CC_i(\bfG)|$ for its size (number of vertices).
Classical results (\cite{er-1,er-2,bollobas-rg-book}) say that for fixed $t< 1$, the size of the largest component $|\CC_1( {\bf ER}^{\sss (n)}(t))|$ is $ O(\log{n})$ while for $t> 1$, $|\CC_1( {\bf ER}^{\sss (n)}(t))| \sim f(t) n$. 
Here $f(t) > 0$ is the survival probability of an associated supercritical branching process. For $t> 1$, the size of the second largest component $|\CC_2( {\bf ER}^{\sss (n)}(t))| = O(\log{n})$. The largest component is often referred to as as the giant component. 

There have been several works aimed at  understanding the nature and emergence of this giant as $t$ transitions from below to above $t_c =1$ (\cite{aldous1997brownian,janson1994birth}). In recent years, motivated by a question of Achlioptas, there has been
a significant interest in investigating more general dynamical random graph models.  The driving theme has been to understand the role of limited choice along with randomness in the evolution of the network, in particular the time and nature of emergence of the giant component. 
The simplest such model that has been  rigorously analyzed,  referred to as the Bohman-Frieze process, can be described as follows. Start with the empty graph at time $t=0$. At each discrete time step, choose two edges $e_1, e_2$ uniformly at random amongst all pairs of ordered edges. Place edge $e_1 = (v_1, v_2)$ if both end points $v_1, v_2$ are isolated vertices (components of size one), otherwise use edge $e_2$.   

Despite this conceptually simple modification of the standard \erdos random graph process, a rigorous understanding of this process turns out to be  non-trivial. Write ${\bf BF}^{\sss (n)}(t)$ for the state of the system when we have placed $\lfloor nt/2 \rfloor$ edges.  Bohman and Frieze (\cite{bohman2001avoiding,bohman2004avoidance}) showed that there exists a time $t> 1$ and $\eps \in (0,1)$ such that the size of the largest component $|\CC_1({\bf BF}^{\sss (n)}(t))| = o(n^\eps)$. Thus this simple modification delays the time of emergence of a giant component.  

In \cite{spencer2007birth}, these results were substantially refined and  extended  to the context of all \emph{bounded-size rules} (BSR) which we now describe. 
%Notationally it will be easier to choose vertices instead of edges at random. 

\textbf{The bounded-size $K$-rule process $\{\BS^{\sss(n)}(t)\}_{t\ge 0}$.}
 Fix $K\geq 0$, this will be a parameter in the construction of the process. Bounded-size rules treat all components of size greater than $K$ in an identical fashion.  Let $\Omega_K =\set{1,2,\ldots, K, \ompar}$. Conceptually $\ompar$ represents components of size greater than $K$.  Given a graph $\bfG$ and a vertex $v\in\bfG$, write $\CC_v(\bfG)$ for the component that contains $v$. Define 
\begin{equation}
\label{eqn:cv-def}
	c_{\bfG}(v) := \left\{\begin{array}{ll}
		|\CC_v(\bfG)| & \mbox{ if }|\CC_v(\bfG)|\leq K\\
		\ompar & \mbox{ if } |\CC_v(\bfG)| > K
	\end{array}\right.
\end{equation}
%We shall sometimes refer to $c(v) $ as the $K$-adjusted component size of $v$ in the graph $\bfG$. 
For a quadruple of (not necessarily distinct) vertices $v_1, v_2, v_3, v_4$, write $\vec{v}$ for the ordered quadruple $\vec{v} = (v_1,v_2,v_3,v_4)$. Let $c_{\bfG}(\vec{v}) = (c_{\bfG}(v_1), c_{\bfG}(v_2), c_{\bfG}(v_3), c_{\bfG}(v_4))$. Fix $F\subseteq \Omega_K^4$. 
The set $F$ will be another parameter in the construction of the process.
The $F$-bounded-size rule($F$-BSR) is defined as follows:
\begin{enumeratea}
	\item At time $k=0$ start with the empty graph $\BS_0^{\sss(n)}:=\bfzero_n$ on $[n]$ vertices. 
	\item For $k\geq 0$, having constructed the graph $\BS_k^{\sss (n)}$,  construct $\BS_{k+1}^{\sss(n)}$ as follows. Choose $4$ vertices $\vec{v} = (v_1, v_2, v_3, v_4)$ uniformly at random amongst all $n^4$ possible quadruples and let 
	$c_k(\vec{v}) = c_{\BS_k}(\vec{v})$. If $c_k(\vec{v}) \in F$ then $\BS_{k+1}^{\sss(n)} = \BS_k^{\sss(n)}\cup (v_1, v_2) $ else $\BS_{k+1}^{\sss(n)} = \BS^{\sss(n)}_k \cup (v_3, v_4)$. 
\end{enumeratea}  
We are interested in the dynamics of the rescaled process $\BS^{\sss(n)}(t) = \BS_{\lfloor nt/2 \rfloor}^{\sss(n)} $. Mathematically it is more convenient  to work with a formulation in which edges are added at Poissonian time instants rather than at fixed discrete 
times.  More precisely, we will consider the following random graph process (denoted once more as $\BS^{\sss(n)}(t)$).
For every quadruple of vertices $\vec{v} = (v_1, v_2, v_3, v_4)\in [n]^4$, let $\PP_{\vec{v}}$ be a Poisson process with rate ${1}/{2n^3}$, independent between quadruples. Note that this implies that the rate of creation of edges is $n^4 \times 1/2n^3 = n/2$. Thus we have sped up time by a factor $n/2$ as in the above  discrete time construction. Start with $\BS^{\sss(n)}(0) =\bfzero_n$. For any $t\geq 0$, at which there is a point in $\PP_{\vec{v}}$ 
for a quadruple  $\vec{v} \in [n]^4$, define
\begin{equation}
	\label{eqn:f-rule-def-cts}
 	\BS^{\sss(n)}(t) =\left\{ \begin{array}{ll}
 		\BS^{\sss(n)}(t-)\cup (v_1, v_2) & \quad \mbox{ if } c_{t-}(\vec{v}) \in F\\
\BS^{\sss(n)}(t-)\cup (v_3, v_4) & \quad \mbox{ otherwise, }
 	\end{array} \right. 
 \end{equation}
where $c_{t-}(\vec{v}) = c_{\BS(t-)}(\vec{v})$.  

Two examples of such processes are  
 Erd\H{o}s-R\'{e}nyi process (here  $K=0$, $\Omega_K=\{\ompar\}$ and $ F = \set{ (\ompar, \ompar, \ompar,\ompar) }$) and 
 Bohman-Frieze process (here $K=1$, $\Omega_K=\{1,\ompar\}$ and $ F =\{ (1,1, j_3, j_4): j_3,j_4 \in \Omega_K \}$).
 % The following two processes are special cases of bounded-size rules:\\
 % (a)  at time $t$ whenever $t$ is a point in $\PP_{\vec{v}}$, and this results in the usual Erd\H{o}s-R\'{e}nyi random graph evolution. The total rate of edges creation is $n^4/2n^3 = n/2$ and
 % classical results (\cite{er-1,bollobas-rg-book}) then imply that the critical time for the emergence of a giant component for this model is $t_c=1$. For $t<1$ the size of the largest component $\CC^{\sss(1)}_n(t) = O(\log n)$ while for $t > 1$, the size of the largest component $\CC_1(t) \sim f(t)n$ where $f(t) > 0$.\\
 % (b)  In this model, one uses the first edge if it connects two singletons (vertices which have no connections at the present time), else uses the second edge. This was the first rigorously analyzed example of a rule that delayed the emergence of the giant component with $t_c \approx 1.1763$ (See \cite{bhamidi-budhiraja-wang2011,bohman2001avoiding,janson2010phase}).
Spencer and Wormald\cite{spencer2007birth} showed that  every bounded-size rules exhibits a phase transition similar to the \erdos random graph process.  More precisely, write $\CC_i^{\sss(n)}(t)$ for the $i$-th largest component in $\BS^{\sss(n)}(t)$, and $|\CC_i^{\sss(n)}(t)|$ for the size of this component.  Define the \emph{susceptibility} function  
\begin{equation}
\label{eqn:suscept-def}
	\calS_2(t) = \sum_{i=1}^\infty |\CC_i^{\sss (n)}(t)|^2.
\end{equation}  
Then \cite{spencer2007birth} proves the following result. 
\begin{Theorem}[Theorem 1.1, \cite{spencer2007birth}]\label{thm:spencer-wormald}
Fix $F \in \Omega_K^4$.  Then for the random graph process associated with the $F$-BSR, there exists deterministic monotonically increasing function $s_2(t)$ and a critical time $t_c$ such that $\lim_{t \uparrow t_c} s_2(t) = \infty$ and
	$$ \frac{\calS_2(t)}{n} \convp s_2(t) \mbox{ as } n \to \infty \mbox{, for all } t \in [0,t_c). $$
For fixed $t< t_c$, $|\CC_1^{\sss(n)}(t)|= O(\log{n})$ while for $t> t_c$, $|\CC_1^{\sss(n)}(t)| = \Theta_P(n)$. 		
\end{Theorem}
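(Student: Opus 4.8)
The plan is to establish Theorem~\ref{thm:spencer-wormald} by combining the differential equation method with a branching-process comparison, as in \cite{spencer2007birth}. \textbf{Step 1 (closed autonomous ODE for the small-component profile).} For $1\le j\le K$ let $X_j^{\sss(n)}(t)$ be the number of vertices in components of size exactly $j$ at time $t$. The defining feature of a bounded-size rule is that the event $c_{t-}(\vec v)\in F$ depends on $\vec v$ only through the $\Omega_K$-types of the four components, and a uniformly chosen vertex has type $j\le K$ with probability $X_j^{\sss(n)}(t)/n$ and type $\ompar$ with the complementary probability. Hence the jump rates of $(X_1^{\sss(n)},\dots,X_K^{\sss(n)})$ are Lipschitz functions of $(X_1^{\sss(n)}/n,\dots,X_K^{\sss(n)}/n)$ alone, and the differential equation method (Wormald) yields a deterministic Lipschitz trajectory $(x_1(t),\dots,x_K(t))$ with $\sup_{t\le T}|X_j^{\sss(n)}(t)/n-x_j(t)|=\Op(n^{-1/2}\log n)$ on every compact $[0,T]$.

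\textbf{Step 2 (susceptibility blows up at a finite $t_c$).} A single merge of components of sizes $a,b$ changes $\calS_2$ by $2ab$, so $\calS_2(t)$ is nondecreasing along every sample path (whence $s_2$, once identified, is monotone increasing). Writing the generator acting on $\calS_2$ and discarding $\Op(\calS_2)$ and $\op(n)$ corrections gives a Riccati-type relation $\tfrac{d}{dt}(\calS_2/n)\approx \beta(x_1(t),\dots,x_K(t))\,(\calS_2/n)^2+(\text{lower order})$, where $\beta$ is the asymptotic probability that the edge actually inserted joins two distinct components; $\beta$ is strictly positive once any component of size exceeding $K$ is present, because the ``rejected'' alternative $(v_3,v_4)$ is always a fresh uniform edge. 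A Gr\"onwall/martingale concentration argument (valid while $\calS_2=o(n^2)$) upgrades this to $\calS_2(t)/n\convp s_2(t)$ for $t$ in the maximal interval of existence of the limiting ODE, and a positive quadratic coefficient forces $s_2(t)\uparrow\infty$ as $t\uparrow t_c$ for a finite $t_c$; this defines $t_c$.

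\textbf{Step 3 (subcriticality $\Rightarrow |\CC_1^{\sss(n)}(t)|=O(\log n)$).} Fix $t<t_c$ and explore the component of a vertex $v$ by breadth-first search. Because the rule is bounded-size, this exploration is closely approximated by a multitype branching process whose type records the size-history of a vertex's component at the creation times of its edges; the mean-offspring operator is an integral operator on a suitable $L^2$ space, and its norm is $<1$ exactly when $t<t_c$ (the two notions of $t_c$ agree since $s_2(t)$ is the limiting expected component size, finite iff the branching process is subcritical). A subcritical multitype branching process has an exponentially decaying total-progeny tail, so $\prob(|\CC_v^{\sss(n)}(t)|\ge C\log n)=o(1/n)$ for $C=C(t)$ large, and a union bound over $v\in[n]$ gives $|\CC_1^{\sss(n)}(t)|=O(\log n)$ with high probability. \textbf{Step 4 (supercriticality $\Rightarrow |\CC_1^{\sss(n)}(t)|=\Theta_P(n)$).} For $t>t_c$ choose $t_c<t'<t$. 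Just below $t_c$ the blow-up in Step~2 forces a positive fraction of the mass into components whose sizes tend to infinity; treating these ``large'' components as super-vertices, the edges added during $(t',t]$ act like an Erd\H{o}s--R\'enyi process on them whose effective parameter (number of large components $\times$ per-pair merging rate $\times (t-t')$) exceeds $1$, so a standard sprinkling/renormalisation argument produces a connected cluster carrying a positive fraction of the vertices, i.e.\ $|\CC_1^{\sss(n)}(t)|\ge\eps n$ with high probability; the matching $O(n)$ bound is trivial.

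\textbf{Main obstacle.} The delicate points are: (i) propagating the error estimates in Steps~1--2 well enough that the Riccati comparison survives up to times approaching $t_c$, together with the genuinely coupled nature of ``which of $e_1,e_2$ is inserted'' and the current component-type configuration (this is what makes the susceptibility recursion only approximately closed); and (ii) the supercritical sprinkling step, where one must quantify precisely how much mass lies in moderately large but still sublinear components as $t'\downarrow t_c$ and verify that the renormalised graph on these components is supercritical --- this step most resembles the classical Erd\H{o}s--R\'enyi phase-transition proof but requires extra care since the large components are neither independent nor exchangeable.
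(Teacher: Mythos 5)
Note first that Theorem~\ref{thm:spencer-wormald} is not proved in this paper at all: it is imported verbatim as Theorem~1.1 of Spencer and Wormald \cite{spencer2007birth}, and the present paper uses it as an input. So there is no ``paper's own proof'' of this exact statement to match against. Your outline is a faithful high-level summary of the Spencer--Wormald route (differential-equation method for the small-component densities, a Riccati-type comparison driving the susceptibility $\calS_2/n$ to a finite-time blow-up that defines $t_c$, a subcritical branching-process/union-bound argument for the $O(\log n)$ bound, and sprinkling for $\Theta_P(n)$).

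It is worth contrasting this with the route the present paper \emph{does} take to rederive the last sentence of the theorem, in the proof of Theorem~\ref{theo:new-crit-time}. Rather than tracking $\calS_2$, the paper couples the subgraph $\BS^*(t)$ of components of size $>K$ to an inhomogeneous random graph $\RG(\va,b,\vc)(t)$ on the type space $\cX=[0,\infty)\times\cD([0,\infty):\NNN_0)$ (Lemma~\ref{lemma:bsr-to-irg}), and characterizes the critical time as the unique $t$ at which the operator norm $\rho(t)$ of the associated integral operator crosses $1$ (Proposition~\ref{lemma:error-norm-t}). Below the crossing time, Lemmas~\ref{lemma:irg-unbounded} and~\ref{lem:lem1426} give $|\CC_1^{\sss(n)}(t)|=O(\log^4 n)$; this is a slightly weaker exponent than Spencer--Wormald's $O(\log n)$, but the bound degrades gracefully and is what the paper needs uniformly down to $t_c-n^{-\gamma}$. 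Above the crossing time, the paper invokes Theorem~3.1 of \cite{bollobas-riordan-janson} to get $|\CC_1^{\sss(n)}(t)|=\Theta_P(n)$, which replaces your sprinkling step. What the susceptibility approach buys is the clean $O(\log n)$ at fixed $t<t_c$; what the operator-norm/IRG approach buys is quantitative control as $t\uparrow t_c$ and a structural identification of the supercritical phase with the BJR giant-component theory.

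The one genuine soft spot in your write-up is the parenthetical in Step~3: ``the two notions of $t_c$ agree since $s_2(t)$ is the limiting expected component size, finite iff the branching process is subcritical.'' For a branching process whose type space is a function space (here $[0,\infty)\times\cD([0,\infty):\NNN_0)$), the equivalence between finiteness of expected total progeny and the mean-offspring operator having norm $<1$ is precisely the kind of statement that requires the BJR $L^2$ machinery; it is not automatic. In the present paper this identification of the susceptibility-explosion time with the operator-norm-one time is itself a theorem --- it is the content of Theorem~\ref{theo:new-crit-time} --- and its proof occupies Sections~\ref{sec:proof-norm-abc}--\ref{sec:proof-phase-transition}. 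So you cannot dispose of it in an aside; either you carry out a finite-dimensional truncation with two-sided error control (the Spencer--Wormald style), or you do the $L^2$ operator analysis explicitly (this paper's style). Flagging this as the ``main obstacle'' would have been more accurate than the two items you list, which are real but secondary by comparison.
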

Here we use $o, O,\Theta$ in the usual manner.  Given a sequence of random variables $\{\xi_n\}_{n\ge 1}$ and a function $f(n)$, we say $\xi_n = O (f)$ if there is a constant $C$ such that $\xi_n \le C f(n)$ with high probability (whp), and we say $\xi_n = \Omega(f)$ if there is a constant $C$ such that $\xi_n \ge Cf(n)$ whp. Say that $\xi_n = \Theta(f)$ if $\xi_n = O(f)$ {\bf and} $\xi_n = \Omega(f)$. 
In addition, we say $\xi_n = o (f)$ if $\xi_n/f(n) \convp 0$.

Thus as $t$ transitions from less than $t_c$ to greater than $t_c$, the size of the largest component jumps from size $O(\log{n})$ to a giant component $\Theta(n)$. The aim of this work is to study the barely subcritical regime, i.e.
to analyze the behavior of the size of the largest component
at times $t= t_c-\eps_n$ where $\eps_n\to 0$. The following is the main result. 
% Further, they showed that when $t < t_c$, $\CC_i^{\sss (n)}(t) = O(\log n)$, and when $t > t_c$, $\CC_i^{\sss (n)} = \Theta(n)$, thus fully characterized $t_c$ as the critical time. 
% In their asympototics of the subcritical case, $t$ is fixed and $n \to \infty$. An interesting question is what happens at time $t_c -\eps_n$, when $\eps_n \downarrow 0$. We will give a very sharp bound on the component size of this barely sub-critical regime for some sequence of $\eps_n$ in the following theorem.
\begin{Theorem}[{\bf Barely subcritical regime}]
\label{thm:subcrit-reg}
Fix  $F \subset \Omega_K^4$ and $\gamma \in (0,1/4)$. Then there exists  $B\in  (0, \infty)$ such that,
\[ \prob\set{ |\CC_1^{\sss (n)}(t)| \le B\frac{(\log n)^4}{(t_c-t)^2},~ \forall t \le t_c-n^{-\gamma} } \to 1,  \]
as $n \to \infty$. 
\end{Theorem}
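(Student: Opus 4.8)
The plan is to couple the growth of a fixed component of $\BS^{\sss(n)}$ to a breadth-first exploration in an inhomogeneous random graph (IRG), dominate that exploration by a subcritical multitype branching process whose mean operator has a spectral gap of order $t_c-t$, extract an exponential tail bound for the total progeny, and then union over vertices and over a polylogarithmically fine grid of times. \emph{Step 1 (reduction to a single time).} Since edges are only ever added, $t\mapsto|\CC_1^{\sss(n)}(t)|$ is nondecreasing, and $t\mapsto(t_c-t)^{-2}$ is nondecreasing on $[0,t_c)$. I would fix a geometric grid $s_M\le 0<s_{M-1}<\dots<s_1<s_0=t_c-n^{-\gamma}$ with $t_c-s_{j-1}=(1+1/\log n)(t_c-s_j)$, so that $M=O((\log n)^2)$; on $[s_{j+1},s_j]$ one has $|\CC_1^{\sss(n)}(t)|\le|\CC_1^{\sss(n)}(s_j)|$ while $(t_c-t)^2\le e^2(t_c-s_j)^2$. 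Thus it suffices to prove, with probability $1-o(n^{-1})$, that $|\CC_1^{\sss(n)}(s_j)|\le B'(\log n)^4/(t_c-s_j)^2$ for every $j$ (with $B'$ a suitable multiple of $B$), and by a union bound over the grid and over vertices this reduces to the single-vertex estimate: for each fixed $s<t_c$ with $\delta:=t_c-s\ge n^{-\gamma}$ and $m:=B'(\log n)^4/\delta^2$, $\ \prob\big(|\CC_v^{\sss(n)}(s)|>m\big)=o\big(n^{-2}(\log n)^{-2}\big)$.

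\emph{Step 2 (IRG coupling and the operator $T_s$).} I would give each vertex $u$ a ``type'' recording its activation data together with the $\Omega_K$-valued trajectory $r\mapsto c_{\BS(r)}(u)$, $r\le s$ --- a point of $\cX:=\RRR_+\times\cD([0,\infty):\NNN_0)$. The differential equation method of \cite{spencer2007birth} shows that the random empirical law of types concentrates around a deterministic $\mu_s$ and the rescaled edge intensities around a kernel $\kappa_s$ on $\cX\times\cX$; set $T_sf(x)=\int_\cX\kappa_s(x,y)f(y)\,\mu_s(dy)$ on $L^2(\cX,\mu_s)$. The coupling realizes the exploration of $\CC_v^{\sss(n)}(s)$ as dominated by a breadth-first exploration of an IRG with kernel $\kappa_s$. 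The structural input --- the ``alternative characterization of $t_c$'' advertised in the abstract --- is that $t_c=\sup\{t:\|T_t\|_{\mathrm{op}}<1\}$, together with the quantitative sharpening that $t\mapsto\|T_t\|_{\mathrm{op}}$ is Lipschitz near $t_c$ with strictly negative left derivative there, so that
\begin{equation*}
\|T_s\|_{\mathrm{op}}\ \le\ 1-c_0\,\delta\qquad\text{for all }s<t_c\text{ with }\delta=t_c-s\text{ small,}
\end{equation*}
for some $c_0>0$ depending only on $(F,K)$.

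\emph{Step 3 (branching domination and progeny tail).} Running the BFS exploration of $\CC_v^{\sss(n)}(s)$ and halting it once the discovered set exceeds $m$, I would show that on an event of the required probability the relevant empirical quantities (type counts, edge counts touching the discovered set) stay within $o(\delta)$ of their $(\mu_s,\kappa_s)$ values, so that the exploration is stochastically dominated by a multitype Galton--Watson process with mean operator a small perturbation $T_s^+$ of $T_s$, whence $\|T_s^+\|_{\mathrm{op}}\le 1-\tfrac12 c_0\delta$. Each step enlarges the discovered set by the remaining size of a freshly attached component, which is $O(\log n)$ by the a priori largest-component bound of \cite{spencer2007birth} (applied at a slightly earlier time, after a crude bootstrap if necessary); hence the associated exploration random walk has increments bounded by $L=O(\log n)$, with mean $\le-\tfrac12 c_0\delta$ and variance $\mathrm{poly}(\log n)$, and a Bernstein/Bennett estimate gives $\prob(|\CC_v^{\sss(n)}(s)|>m)\le C\exp\!\big(-c\,\delta^2 m/\mathrm{poly}(\log n)\big)$. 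Taking $B'$ large makes the right side $\le n^{-3}$, which closes Step 1. The restriction $\gamma<1/4$ enters precisely here: the coupling error incurred while exploring $\asymp(\log n)^4/\delta^2$ vertices scales like a power of (discovered size)$/n$ inflated by a negative power of $\delta$, and keeping it negligible against $\delta=n^{-\gamma}$ forces $\gamma$ into this range (and is also what leaves the cushion of extra logarithmic factors).

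\emph{Main obstacle.} The technical heart is Steps 2--3: constructing the IRG coupling on the correct infinite-dimensional type space and kernel so that the BSR component is faithfully dominated; proving the \emph{uniform} spectral estimate $\|T_s\|_{\mathrm{op}}\le 1-c_0(t_c-s)$ as $s\uparrow t_c$, rather than merely $\|T_{t_c}\|_{\mathrm{op}}=1$; and upgrading the law of large numbers for the empirical type measure to a concentration bound strong enough, in this near-critical regime, to sustain the branching-process domination out to components of size $\mathrm{poly}(\log n)/\delta^2$. It is the interplay of these near-critical error bounds with the target window size that both generates the surplus logarithmic factors and pins down the admissible range of $\gamma$.
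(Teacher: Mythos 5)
Your Steps 2--3 track the paper's argument almost exactly: the same type space $\cX=\Rbold_+\times\cD([0,\infty):\NNN_0)$ and IRG coupling (Lemma~\ref{lemma:bsr-to-irg}), the same operator-norm characterization of $t_c$ (Theorem~\ref{theo:new-crit-time} / Proposition~\ref{lemma:error-norm-t}), the quantitative gap $\|\cK_s\|\le 1-c_0(t_c-s)$ (Lemma~\ref{lemma:der-norm-t} combined with the perturbation estimate, as in Lemma~\ref{lemma:norm-ia}), and an exponential tail for the component volume with decay rate proportional to the squared spectral gap (Lemma~\ref{lemma:irg-unbounded}, which is a branching-type bound). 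You also correctly identify that the $\gamma<1/4$ restriction is forced by the interaction of the $\sqrt{\eps}$ rate in the operator-norm perturbation estimate (Proposition~\ref{lemma:error-norm-abc}) with the sub-Gaussian concentration of the empirical densities (Lemma~\ref{lemma:approx-xi}). Two small points in your write-up: the operator norm is increasing, so the relevant left derivative near $t_c$ is strictly \emph{positive}, not negative; and your grid indexing has the relation $t_c-s_{j-1}=(1+1/\log n)(t_c-s_j)$ backwards (you want the distance from $t_c$ to grow as $j$ increases).

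Your Step~1, however, is a genuinely different route from the paper's. The paper does not use a time grid. It defines the stopping time $\tau=\inf\{t:|\CC_1^{\sss(n)}(t)|\ge m(n,t)\}$ and observes that the threshold is first crossed only by a merge of two components each of size $<m(n,\tau)$; bounding the merge rate by $2m^2(n,t)/n$, summing over the $\binom{n}{2}$ pairs of designated vertices, and integrating against the fixed-time bound from Proposition~\ref{prop:size-first-com} yields $\prob\{\tau\le t_n\}=O(n^{-1+4\gamma}(\log n)^8)$, which requires $\gamma<1/4$ a \emph{second} time. Your grid-plus-monotonicity argument (using that both $t\mapsto|\CC_1^{\sss(n)}(t)|$ and $t\mapsto(t_c-t)^{-2}$ are nondecreasing) is valid and arguably cleaner: union-bounding $M=O((\log n)^2)$ grid points against the $O(n^{-2})$ fixed-time bound of Proposition~\ref{prop:size-first-com} gives $O((\log n)^2/n^2)=o(1)$ with no extra constraint on $\gamma$; the only place $\gamma<1/4$ enters your argument is in the spectral-gap estimate. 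So your reduction buys a slightly less constrained final step; the paper's stopping-time argument is a somewhat heavier hammer that happens to hit the same constraint twice.

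One detail in Step~3 that differs from the paper and is, as stated, a bit shaky: you propose to bound each BFS increment by $O(\log n)$ ``by the a priori largest-component bound of \cite{spencer2007birth} applied at a slightly earlier time, after a crude bootstrap.'' Spencer--Wormald gives $O(\log n)$ only for \emph{fixed} $t<t_c$; it does not by itself control immigration clusters at $t_n=t_c-n^{-\gamma}$, so the ``crude bootstrap'' would need to be made precise. The paper avoids this issue entirely by truncating the IRG type space to $\Lambda_n=\{(s,w):w(t)\le B\log n\}$ and bounding the mass of $\Lambda_n^c$ directly (Lemma~\ref{lemma:error-wst} fed into Lemma~\ref{lemma:irg-unbounded}(ii)); the logarithmic bound then comes for free from the tail of a dominating Yule process (Lemma~\ref{lemm:yule-domn}), not from a black-box bound on the original process. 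If you intend to keep your route, replacing the Spencer--Wormald bootstrap with this internal truncation is the cleaner fix.
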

As another consequence of our proofs, we obtain  an alternative characterization of the critical time for a bounded-size rule given in Theorem \ref{theo:new-crit-time} below. Let 
$\cX = [0,\infty)\times \cD([0,\infty):\NNN_0)$
where $\cD([0,\infty):\NNN_0)$ is the Skorohod $D$-space of functions that are right continuous and have left limits with values in the space of nonnegative integers,  equipped with the usual Skorohod topology. 
Given  a finite measure $\mu$ on $(\cX, \cB(\cX))$ and a measurable map $\kappa: \cX \times \cX \to [0, \infty)$ satisfying $\int_{\cX \times \cX}\kappa^2(\bfx,\bfy)\mu(d\bfx)\mu(d\bfy) < \infty$, define the integral operator
$\cK: L^2(\cX, \mu) \to L^2(\cX, \mu)$ as
$$\cK f(\bfx) = \int_{\cX} \kappa(\bfx,\bfy) f(\bfy) \mu(d\bfy), f \in L^2(\cX, \mu), \; \bfx \in \cX .$$
We refer to $\kappa$ as a kernel on $\cX \times \cX$ and $\cK$ as the integral operator associated with $(\kappa, \mu)$.
We will show the following result.   
\begin{Theorem}[{\bf Characterization of the critical time}]
	\label{theo:new-crit-time}
	Fix  $F \subset \Omega_K^4$. Then there exists a collection of  kernels $\set{\kappa_t}_{t\geq 0}$ on $\cX \times \cX$ and finite measures $\set{\mu_t}_{t\geq 0}$ on $(\cX, \cB(\cX))$ such that the  integral operators $\cK_t$ associated with
	$(\kappa_t, \mu_t)$, $t > 0$,
	have the property that the operator norms $\rho(t) = \|\cK_t\|$ are continuous and strictly increasing in $t$.  Furthermore, $t_c$ is the unique time instant such that $\rho(t_c) = 1$.
	% 
	% 
	% ith a unique critical time $t_c^\prime$ defined by $\rho(t_c^\prime) =1$. Further the size of the largest component $|\CC_1^{\sss(n)}(t)|$ in $\BS(t)$ satisfies 
	% \begin{enumeratea}
	% 	\item For $t<t_c^\prime$  satisfies $|\CC_1^{\sss (n)}(t)| \le C (\log n)^4$ with high probability for some constant $C$.
	% 	\item For $t > t_c^\prime$, $|\CC_1^{\sss (n)}(t)| = \Theta(n)$.
	% \end{enumeratea}
	% In particular, $t_c^\prime =t_c$ as in Theorem \ref{thm:spencer-wormald}. 
\end{Theorem}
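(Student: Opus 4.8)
The plan is to extract the operators $\cK_t$ from the very coupling of $\BS^{\sss(n)}$ with inhomogeneous random graphs that underlies the proof of Theorem~\ref{thm:subcrit-reg}, and then to transport to $\rho(t)=\|\cK_t\|$ two facts we already possess about the BSR process: its component structure is monotone in $t$ (edges are only ever added), and by Theorem~\ref{thm:spencer-wormald} the limiting susceptibility $s_2(t)$ is finite for $t<t_c$ and blows up as $t\uparrow t_c$. Recall the shape of the coupling: a uniformly chosen vertex of $\BS^{\sss(n)}(t)$ is, to leading order, assigned a type $\bfx=(w,\phi)\in\cX=\Rbold_+\times\cD([0,\infty):\NNN_0)$, where $w$ is a weight-like parameter governing the rate at which the vertex's component participates in merges and $\phi$ records the trajectory $s\mapsto c_{\BS(s)}(v)$ of its component type (with $\ompar$ coded as, say, $K+1$), and the exploration of its component converges to a multitype branching process whose mean offspring operator is the $\cK_t$ of the theorem, built from a symmetric kernel $\kappa_t$ on $\cX\times\cX$ and the limiting empirical type measure $\mu_t$ of $\BS^{\sss(n)}(t)$. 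The first step is to make this identification precise and to verify $\int_{\cX\times\cX}\kappa_t^2(\bfx,\bfy)\,\mu_t(d\bfx)\,\mu_t(d\bfy)<\infty$ for every $t\le t_c$ — via the branching-process representation, the subcritical moment bounds available for $t<t_c$, and a direct finite-time computation at $t_c$ — so that $\cK_t$ is a self-adjoint Hilbert--Schmidt (hence compact) operator. Then $\rho(t)$ equals the largest eigenvalue of $\cK_t$, and, since $\kappa_t\ge 0$ and $\kappa_t$ is irreducible on the support of $\mu_t$ (any two ``live'' types communicate through the ever-present isolated-vertex types), a Krein--Rutman argument shows $\rho(t)$ is a simple eigenvalue with a strictly positive eigenfunction.

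Next I would establish the two analytic properties of $\rho$. Writing $\mu_t=g_t\,d\mu$ against a fixed reference measure $\mu$, the operator $\cK_t$ on $L^2(\mu_t)$ is unitarily equivalent to the self-adjoint operator $\widetilde\cK_t$ on $L^2(\mu)$ with symmetric kernel $\widetilde\kappa_t(\bfx,\bfy)=g_t(\bfx)^{1/2}\kappa_t(\bfx,\bfy)g_t(\bfy)^{1/2}$, so $\rho(t)=\|\widetilde\cK_t\|$. For \emph{monotonicity}, the construction should provide a coupling of the ingredients for $s<t$ under which $\widetilde\kappa_s\le\widetilde\kappa_t$ pointwise — more merges and more type-mass accumulate as time advances — whence $\widetilde\cK_s\le\widetilde\cK_t$ on nonnegative functions and $\rho(s)\le\rho(t)$; strictness is the usual Perron--Frobenius/Krein--Rutman input, requiring only that $\widetilde\kappa_t-\widetilde\kappa_s$ be strictly positive on a set of positive $\mu\otimes\mu$-measure while the leading eigenfunction stays strictly positive. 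For \emph{continuity}, I would show $t\mapsto\widetilde\kappa_t$ is continuous in $L^2(\mu\otimes\mu)$ — this reduces to continuity in $t$ of the joint law of the weights and the component-type trajectories, plus dominated convergence — and then conclude from $\big|\rho(t)-\rho(s)\big|\le\|\widetilde\cK_t-\widetilde\cK_s\|_{\mathrm{op}}\le\|\widetilde\kappa_t-\widetilde\kappa_s\|_{L^2(\mu\otimes\mu)}$.

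It remains to identify the critical time. In the inhomogeneous-random-graph picture the per-vertex limiting susceptibility is the Neumann-type series
\[ s_2(t)\;=\;\Big\langle (I-\cK_t)^{-1}\psi_t,\ \psi_t\Big\rangle_{L^2(\mu_t)}\;=\;\sum_{k\ge 0}\big\langle \cK_t^{\,k}\psi_t,\ \psi_t\big\rangle_{L^2(\mu_t)}, \]
for an explicit weight $\psi_t$ read off from the edge-formation rule, a quantity that is finite precisely when $\rho(t)<1$ and diverges as $\rho(t)\uparrow 1$. Combined with Theorem~\ref{thm:spencer-wormald}, this forces $\rho(t)<1$ for all $t<t_c$, and then continuity of $\rho$ gives $\rho(t_c)=1$; uniqueness of the crossing point is immediate from strict monotonicity. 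An alternative and in some ways more robust route is to argue directly in the coupled graph: if $\rho(t)>1$, the irreducible supercritical multitype branching process survives with positive probability and produces a component of size $\Theta(n)$, contradicting $|\CC_1^{\sss(n)}(t)|=O(\log n)$ for $t<t_c$, while $\rho(t)\le 1$ precludes a giant.

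\textbf{Main obstacle.} The abstract operator theory — compactness, Krein--Rutman, monotonicity and continuity of the top eigenvalue — is standard; the real work lies in the two bridging steps. First, one must justify that the operator $\cK_t$ read off from the finite-$n$ coupling genuinely captures the first-order local structure of $\BS^{\sss(n)}(t)$ finely enough that the sign of $\rho(t)-1$ controls the existence of a giant component; this is essentially the content of the coupling built for Theorem~\ref{thm:subcrit-reg}, so much of it is shared, but the supercritical direction near $t_c$ is delicate. Second, verifying irreducibility of $\kappa_t$ and the $L^2$-continuity of $t\mapsto\kappa_t$ on the infinite-dimensional Skorohod type space $\cX$, where the trajectories $\phi$ depend on $t$ in a subtle way, demands genuine care with the Skorohod topology and with tightness of the type measures $\mu_t$ as $t\uparrow t_c$.
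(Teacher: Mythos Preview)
Your outline is broadly in the right spirit but diverges from the paper in several places, and contains one genuine misidentification and one gap.

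\textbf{The type space.} Your description of $\bfx=(w,\phi)$ with $\phi$ recording $s\mapsto c_{\BS(s)}(v)$ is not what is built. In the paper the type of a vertex in the IRG is $\bfx=(s,w)$, where $s$ is the time at which an \emph{immigrant cluster} (a newly-formed component of size $>K$) enters $\BS^*$, and $w\in\cD([0,\infty):\NNN_0)$ records the size of that cluster together with all subsequent \emph{attachments} of small components (but not mergers with other large clusters). The kernel is then the completely explicit $\kappa_t(\bfx,\bfy)=\int_0^t w(u)\tilde w(u)\,b(u)\,du$, and $\mu_t$ is built from the immigration rates $a_i$ and the law $\nu_{s,i}$ of the attachment Markov process driven by the $c_i$. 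This concreteness matters for the next point.

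\textbf{Monotonicity and continuity.} The paper does not need Krein--Rutman or irreducibility. Because $w$ is nondecreasing and $b>0$, one has the pointwise ratio bound $\kappa_{t_2}(\bfx,\bfy)\ge\bigl(1+(t_2-t_1)\inf_{[t_1,t_2]}b/(t_1\|b\|_\infty)\bigr)\kappa_{t_1}(\bfx,\bfy)$, which immediately gives $\rho(t_2)\ge(1+c(t_2-t_1))\rho(t_1)$ with $c>0$ --- strict monotonicity with a quantitative lower slope, no eigenfunction positivity required. Continuity is, as you say, the Hilbert--Schmidt bound $|\rho(t_2)-\rho(t_1)|\le\|\kappa_{t_2}-\kappa_{t_1}\|_{L^2(\mu_{t_2}\otimes\mu_{t_2})}$, controlled via the Yule-process domination of $w(t)$. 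Your change-of-measure trick to a fixed reference $\mu$ is used in the paper, but only for perturbations of the \emph{rate functions} $(\va,\vc)$, not for time perturbation; for $t$-monotonicity it is unnecessary since $\mu_{t_1}$ is simply the restriction of $\mu_{t_2}$.

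\textbf{Identifying $t_c$.} Your primary route via the Neumann series $s_2(t)=\langle(I-\cK_t)^{-1}\psi_t,\psi_t\rangle$ has a gap: this identity is not established anywhere, and proving it for a general BSR would require showing that the susceptibility of $\BS^{\sss(n)}(t)$ agrees, to first order, with that of the IRG --- a statement at least as delicate as the component-size comparison. The paper instead takes (essentially) your ``alternative route'': having defined $t_c'$ as the unique solution of $\rho(t_c')=1$, it shows directly that for $t<t_c'$ the largest component is $O(\log^4 n)$ (via the subcritical IRG bound applied to $\RG^+$, after a small perturbation to absorb the coupling error), and for $t>t_c'$ the largest component is $\Omega(n)$ (via the supercritical IRG result of Bollob\'as--Janson--Riordan applied to $\RG^-$). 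Comparing with the Spencer--Wormald characterization of $t_c$ forces $t_c'=t_c$.
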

% (i) For $t < t_c'$, ;\\
% (ii)  \\
% In particular, $t_c'$ is also the time of phase transition and $t_c'=t_c$ where $t_c$ is the critical time defined in the classic way.
See Section \ref{sec:other-random-graph-processes} for a precise definition of $\kappa_t$ and $\mu_t$.  We postpone the discussion of the connection between the integral operators in Theorem \ref{theo:new-crit-time}
and the BSR processes to Section \ref{sec:discussion}. Using arguments similar to \cite{bf-spencer-perkins-kang}  for the Bohman-Frieze model, one can show that for any fixed $\eps> 0$, the size of the largest component 
at time $t=t_c-\eps$ can be lower bounded as $|\CC_1^{\sss(n)}(t)|\geq  A \log{n}/(t_c-t)^2$ where $A$ is a constant independent of $\eps$. Thus up to logarithmic factors one expects the upper bound in Theorem \ref{thm:subcrit-reg} to be tight.

% \footnote{(Shall we delete this comment since the case when $t=t_c-\eps$ for a constant $\eps$ is not comparable to our case.) }

Theorem \ref{thm:subcrit-reg} plays a central role in  the study of the asymptotic dynamic behavior of the process describing the vector
of component sizes and associated surpluses for BSR processes in the critical scaling window and its connections with the augmented multiplicative coalescent process.  This study is the subject of  \cite{amc-2012} to which we refer the reader for details.

% determining and analyzing the critical scaling window of the process. Using Theorem \ref{thm:subcrit-reg}, in a separate study \cite{amc-2012} we show  the following result. 
% \begin{Theorem}[{\bf Critical regime}, Theorem 3.4, \cite{amc-2012}]
% 	\label{theo:crit-regime}
% 	Fix $\lambda \in \Rbold$ and bounded-size rule $F$. There exist constants $\alpha = \alpha(F)$ and $\beta= \beta(F)$ such that writing $\bar{C}_n(\lambda)$ for the rescaled component sizes 
% 	\[\bar{C}_n(\lambda) := \left(\frac{\beta^{1/3}}{n^{2/3}}\CC_n^{\sss(i)}\left(t_c + \frac{\alpha \beta^{2/3}\lambda}{n^{2/3}}\right) : i \ge 1\right)\]
% 	Then $\bar{C}_n(\lambda) \convd X(\lambda)$ on $\ldown$ as $n\to\infty$. Here $X(\lambda)$ is the standard multiplicative coalescent at time $\lambda$. 
% \end{Theorem}
% The standard multiplicative coalescent is a Markov process on the space $\ldown = \set{(x_i)_{i\geq 1}: x_1\geq x_2\geq \cdots \geq 0, \sum_{i=1}^\infty x_i^2 < \infty}$ with the natural metric induced as a subset of $l^2$ and arises as the limit object for the \erdos random graph process at criticality. We refer the reader to \cite{aldous1997brownian} for the construction of this fundamental object as motivated by \erdos random graph and \cite{amc-2012} for its connection to general bounded-size rules at criticality. 
% % Our proof strategy of the above theorem actually suggest another characterization of the critical time $t_c$. The operator $\KK_t$ and the space $L^2(\XX_t, \mu_t)$ will be introduced later.

\subsection{Organization of the paper}
The paper is organized    as follows. In Section \ref{sec:discussion} we give a discussion  of the main result. Section \ref{sec:not} collects some notation used in this work. In Section \ref{sec:bsr-diff-eqns} we introduce and analyze
 certain inhomogeneous random graph processes associated with the BSR process.  Finally, in Section \ref{sec:bsr-largest-component} we  complete the proofs of Theorems \ref{thm:subcrit-reg}  and \ref{theo:new-crit-time}.

\section{Discussion}
\label{sec:discussion}
We now give some background, open problems and general discussion of the results in this work.

 \subsection{Subcritical and supercritical random graphs:} There has been a considerable interest  in understanding various properties of random graph models in the barely subcritical and supercritical regime, see for example (\cite{pittel-subcrit,janson-subcrit,janson-luczak-subcrit}) for various results on complex network models such as the configuration model in the subcritical regime, \cite{peres-ding,poisson-cloning} for structural properties of such graphs including mixing times of the nearly supercritical \erdos random graph, \cite{van2012hypercube} for an analysis of the Hamming cube near the critical regime and \cite{bollobas-riordan-janson} for an extensive analysis of a general class of the inhomogeneous random graphs.  
In recent  years there has been a significant effort in understanding a special (non-bounded-size) rule called the Product rule (\cite{achlioptas2009explosive}), where one uses the edge that minimizes the product of the components at the end points. Simulations in \cite{achlioptas2009explosive}  suggest that the nature of the emergence of the giant component is different from that observed for rules such as the \erdos or Bohman-Frieze process. Conceptually such rules tend to be harder to analyze since one needs to keep track not just of vertices in components upto size $K$ for $K< \infty$ but for all $K$. There has been recent progress in understanding such rules \cite{riordan2011achlioptas}. The subcritical regime for such processes has been studied in \cite{riordan2012evolution} where it is shown that there exists a critical time $t_c$ such that for $t> t_c$, the susceptibility function defined as in \eqref{eqn:suscept-def} satisfies $\calS_2(t)/n\convp \infty$ as $n\to\infty$. Furthermore there exist functions $\set{f_k(\cdot)}_{k\geq 1}$ such that for $t< t_c$ the proportion of vertices in components of size $k$ remains closely concentrated about $f_k(t)$ as $n\to\infty$.

For bounded-size rules, the only known results in the barely subcritical regime are in the context of the Bohman-Frieze process in \cite{bf-spencer-perkins-kang} and \cite{bhamidi-budhiraja-wang2011} (also see \cite{janson2010phase} where scaling exponents for susceptibility functions in the Bohman-Frieze process were derived). In \cite{bf-spencer-perkins-kang} it was shown that for fixed $\eps$, the largest component in the Bohman-Frieze process at time $t_c-\eps$ satisfies  $|\CC_1(\BF^{\sss (n)}(t_c-\eps))| = \Theta (\log{n}/\eps^2)$. In \cite{bhamidi-budhiraja-wang2011}, upper bounds when $\eps = \eps_n$, 
where $\eps_n \to 0$, were obtained and  a result  analogous to Theorem \ref{thm:subcrit-reg} was shown using the special structure of certain differential equations associated with the Bohman-Frieze process. 
  
\subsection{Optimal scaling and open questions:}  
Note that for a fixed $\eps > 0$, Theorem \ref{thm:subcrit-reg} gives an upper bound of $B(\log{n})^4/\eps^2$. One would expect, similar to the special case of the Bohman-Frieze process treated in \cite{bf-spencer-perkins-kang}, that for fixed $\eps$, $|\CC_1(\BF^{\sss (n)}(t_c-\eps))|= \Theta(\log{n}/\eps^2)$. It would be interesting to see if the results in the paper can be refined to prove this result for general bounded-size rules. 
Also note that Theorem  \ref{thm:subcrit-reg} considers time instants 
 $t_n = t_c -n^{-\gamma}$ for $\gamma \in (0,1/4)$. A separate analysis that uses this result as a starting point is carried out in \cite{amc-2012} to treat the 
  component sizes in the critical scaling window i.e. when $\gamma =1/3$. It would be interesting to extend the analysis of the current paper
to get  refined estimates on the size of the largest component
when $ \gamma \in [1/4,1/3)$.

\subsection{Connection to the discrete time system:} 
We use the continuous time construction given in terms of Poisson processes, as opposed to the discrete time construction, for mathematical convenience (see e.g. the various martingale estimates in Section \ref{sec:bsr-diff-eqns}). 
It is easy to show that in the continuous time construction, by time $t_n = t_c - n^{-\gamma}$, the number of edges in the system is of the order $nt_c - n^{1-\gamma} + O(\sqrt{n}) $. 
%For $\gamma \in [0,1/4]$, $n^{1-\gamma}>> \sqrt{n}$, in particular, 
Using this and the monotonicity of the process it is easy to check that Theorem \ref{thm:subcrit-reg} holds for the discrete time version as well.

\section{Notation}
\label{sec:not}
We collect some notation used through the rest of the paper. 
All unspecified limits are taken as $n \to \infty$. We use $\convp$ and $\convd$ to denote convergence in probability and in distribution respectively.
Given a sequence of events $\{E_n\}_{n\ge 1}$, we say $E_n$  occurs with high probability (whp) if $\prob\{E_n\} \to 1$.

% Given a sequence of random variables $\{\xi_n\}_{n\ge 1}$ and a function $f(n)$, we say $\xi_n = O (f)$ if there is a constant $C$ such that $\xi_n \le C f(n)$ whp, and we say $\xi_n = \Omega(f)$ if there is a constant $C$ such that $\xi_n \ge Cf(n)$ whp.
%In addition, we say $\xi_n = o (f)$ if $\xi_n/f(n) \convp 0$.

For a set $S$ and a function $\bfg: S \to \RRR^k$, we write $||\bfg||_{\infty} = \sum_{i=1}^k\sup_{s \in S}|g_i(s)|$, where $\bfg = (g_1, \cdots g_k)$.
For a Polish space $S$, we denote by $\BM(S)$, the space of bounded measurable functions on $S$ (equipped with the Borel sigma-field $\clb(S)$.)
For a finite set $S$, $|S|$ denotes the number of elements in the set. $\NNN_0$ is the set of nonnegative integers.
For ease of notation, we shall often suppress the dependence on $n$ and shall write for example $\BS(t) = \BS^{\sss(n)}(t)$.
Recall the Poisson  processes $\PP_{\vec{v}}$ used to construct $\BS(\cdot)$ in Section \ref{sec:intro}. Let $\set{\FF_t}_{t\geq 0}$ be the associated  filtration: $\FF_t = \sigma\set{\PP_{\vec{v}}(s): s\leq t, \vec{v}\in [n]^4}$. We shall often deal with $\set{\FF_t}$-semimartingales $\set{J(t)}_{t\geq 0}$  of the form 
\begin{equation}
\label{eqn:semi-mart-def}
dJ(t):= \alpha(t) dt + dM(t),
\end{equation} 
where $M$ is a $\set{\FF_t}$ local martingale. We shall denote $\alpha = \vd(J)$ and $M=\vM(J)$.  For a local martingale $M(t)$, we shall write $\langle M, M\rangle(t)$ for the predictable quadratic variation process namely the predictable process of bounded variation such that $M(t)^2- \langle M,M \rangle(t)$ is a local martingale.

\section{Inhomogeneous random graphs}
\label{sec:bsr-diff-eqns}

Fix $K\geq 0$ and a general bounded-size rule $F\subseteq \Omega_K^4$ and recall that $\set{\BS(t)}_{t\geq 0}$ denotes the continuous time bounded-size rule process started with the empty graph at $t=0$. Note that $K=0$ case corresponds to the \erdos random graph process for which results such as Theorem \ref{thm:subcrit-reg} are well known. Thus, henceforth we shall assume $K\geq 1$.  We begin in Section \ref{sec:small-component}   by analyzing the proportion of vertices in components of size $i$ for $i\leq K$. As shown in \cite{spencer2007birth}, these converge to a set of deterministic functions which 
can be characterized as the unique solution of a set of differential equations. We will need precise rates of convergence for these proportions  which we establish in  Lemma \ref{lemma:approx-xi}. We then study the evolution of components of size larger than $K$ in Section \ref{sec:evol-bst-st}. Finally, we relate the evolution of these components to an inhomogeneous random graph (IRG) model in  Section \ref{sec:other-random-graph-processes}.

\subsection{Density of vertices in components of size bounded by $K$}
\label{sec:small-component}
Recall from Section \ref{sec:intro}, \eqref{eqn:cv-def} that $c_t(v) = c_{\BS(t)}(v)$, for $v \in [n]$.    For $t\geq 0$ and $i\in \Omega_K$, define 
\be
X_i(t)=|\set{v \in [n] : c_t(v)=i}| \mbox{ and } \barx_i(t)=X_i(t)/n. \label{eqn:def-x}
\ee
%Thus $X_i(t)$ denotes the number of vertices in ($K$-adjusted) component size $i$ at time $t$ and $\barx_i(t)$ denotes the corresponding density. 
Following \cite{spencer2007birth},
the first step in analyzing bounded-size rules   is understanding the evolution of $\barx_i(\cdot)$ as functions of time as $n\to\infty$. 
Although \cite{spencer2007birth} proves the convergence of $\barx_i(t)$ as $n \to \infty$, we give here a self contained proof of this convergence with precise rates of convergence that will be needed in the proof of 
Theorem \ref{thm:subcrit-reg}.  
%We will give a proof of this convergence in the continuous time version since we will also need rates of convergence.   % One of the fundamental results of \cite{spencer2007birth} shows that these functions converge to deterministic functions as $n\to\infty$. Since we will also have to specify rates of convergence to these deterministic functions, so the main goal of this section is to prove the convergence with specified rates in the following Lemma \ref{lemma:approx-xi}. \\ 
 
Note that the BSR process changes values at the occurrence of points in the Poisson processes $\PP_{\vec{v}}$, $\vec{v} \in [n]^4$.  We call each such occurrence as a `round'
and call a round {\bf redundant} if the added edge in that round joins two vertices in the same component. Note that such rounds do not have any effect on component sizes or on the vector $\bar{\bfx}(t) = (\bar x_1(t), \bar x_2(t), \ldots, \bar x_K(t), \bar x_\ompar(t))$. We will in fact observe that such rounds are quite rare. We now describe the effect of non-redundant rounds on $\bar{\bfx}(\cdot)$.  For $\vec{j} \in \Omega_K^4$ and $i \in \Omega_K$, write $\Delta(\vec{j};i)$ for the change $\Delta X_i(t) := X_i(t)-X_i(t-)$ 
at an occurrence time $t$
if the chosen quadruple $\vec{v} \in [n]^4$  satisfies $ c_{t-}(\vec{v}) = \vec{j} $  and the round is not redundant. It is easy to check (see Section 2.1, \cite{spencer2007birth}) that, when $\vec{j}=(j_1,j_2,j_3,j_4) \in F$,
\begin{align*}
	\Delta(\vec{j}; i) &= i\cdot ({\bf 1}_{\{j_1+j_2=i\}} -{\bf 1}_{\{j_1=i\}} - {\bf 1}_{\{j_2=i\}}) , \mbox{ for }  1 \le i \le K,\\
	\Delta(\vec{j}; \ompar) &= \ind_{\{ j_1 + j_2 =\ompar\}} ( j_1 \ind_{\{j_1 \le K\}}+ j_2 \ind_{\{j_2 \le K\}} ),
\end{align*}
with the convention $j_1 + j_2 = \ompar$ when the  sum  of $j_1, j_2$ is greater than $K$, and $j_1 + \ompar = \ompar + j_1 =\ompar$ for all $j_1 \in \Omega_K$. For $\vec{j}=(j_1,j_2,j_3,j_4) \in F^c$  one uses the second edge $\set{v_3, v_4}$ and the expressions for $\Delta(\vec{j};i)$ are  identical to the above, with $(j_3, j_4)$ replacing $(j_1, j_2)$. Note that the corresponding change in the density $\barx_i(t) = X_i(t)/n$ is given by $\Delta(\vec{j};i)/n$. For $\vec{j}\in \Omega_K^4$ and $t> 0$, write  
\[\cQ(t;\vec{j}):= \set{\vec{v}\in [n]^4: c_t(\vec{v}) = \vec{j} }.\]
Since each quadruple $\vec{v}\in [n]^4$ is selected according to the Poisson process $\PP_{\vec{v}}$ with rate $1/2n^3$, the above description of the jumps of $X_i(\cdot)$ leads to a  semi-martingale decomposition of $\barx_i$ of the form \eqref{eqn:semi-mart-def}  
with 
% \[d\barx_i(t) = f_i(t) dt + dM_i(t)\]
% where 
\begin{equation}
\label{eqn:fi-eqn}
	\bfd(\bar x_i)(t) = \sum_{\vec{j}\in F} \sum_{\vec{v}\in \cQ(t; \vec{j})} \frac{\Delta(\vec{j};i)}{2n^4}\ind\set{\CC_{v_1}(t)\neq \CC_{v_2}(t)} + \sum_{\vec{j}\in F^c} \sum_{\vec{v}\in \cQ(t; \vec{j})} \frac{\Delta(\vec{j};i)}{2n^4}\ind\set{\CC_{v_3}(t)\neq \CC_{v_4}(t)},
\end{equation}
where  $\CC_v(t) := \CC_v(\BS(t))$ denotes the component containing $v$ in  $\BS(t)$.  
    % by replacing $j_1, j_2$ with $j_3,j_4$ in the above expression, we have
% \begin{align*}
% 	\Delta(\vec{j}; i) &= i\cdot ({\bf 1}_{\{j_3+j_4=i\}} -{\bf 1}_{\{j_3=i\}} - {\bf 1}_{\{j_4=i\}}) , \mbox{ for }  1 \le i \le K,\\
% 	\Delta(\vec{j}; \ompar) &= \ind_{\{ j_3 + j_4 =\ompar\}} ( j_3 \ind_{\{j_3 \le K\}}+ j_4 \ind_{\{j_4 \le K\}} ).
% \end{align*}

Define for $i \in \Omega_K$, the functions $F_{i}:[0,1]^{K+1}\to \Rbold$ mapping the vector  $\bfx=(x_1, x_2,...,x_K, x_\ompar) \in \RRR^{K+1}$ to  
\be
F_i^x(\bfx) = \frac{1}{2}\sum_{\vec{j} \in F } \Delta(\vec{j}; i) x_{j_1}x_{j_2}x_{j_3}x_{j_4} + \frac{1}{2}\sum_{\vec{j} \in F^c } \Delta(\vec{j}; i) x_{j_1}x_{j_2}x_{j_3}x_{j_4}. \label{eqn:def-fxi}
\ee 

%Note that the change $\Delta(\vec{j}; i) = 0$ if $(j_1, j_2) = (\ompar, \ompar)$ and $\vec{j} \in F$, or $(j_3, j_4) = (\ompar, \ompar)$ and $\vec{j} \in F^c$.
Note that  $|\Delta(\vec{j};i)|\leq 2K$ for all $\vec{j}\in \Omega_K^4$. Also,
$$
\max \left \{ 
\sum_{\vec{j}\in F} \sum_{\vec{v}\in \cQ(t; \vec{j})} \ind\set{\CC_{v_1}(t)= \CC_{v_2}(t)} , \sum_{\vec{j}\in F^c} \sum_{\vec{v}\in \cQ(t; \vec{j})} \ind\set{\CC_{v_3}(t)= \CC_{v_4}(t)} \right \} \le n^3K.$$
%The rate of choosing a redundant edge from a component of size $i\leq K$ is bounded by $2K n^3 \cdot \frac{1}{2n^3}$. 
Thus we have 
\be
|\bfd(\bar x_i)(t)- F_i^x(\bar \bfx(t))| \le \frac{2K}{2n^4} \cdot 2 Kn^3 =\frac{2K^2}{n}. \label{eqn:error-xi-trend}
\ee
Note that $\barx_1(0) = 1$ while $ \barx_i(0) = 0$  for other $i\in \Omega_K$.
Guided by equations \eqref{eqn:fi-eqn} -- \eqref{eqn:error-xi-trend}, \cite{spencer2007birth} considered  the system of differential equations for $ \bfx(t) := (x_j(t):j \in \Omega_K )$
\begin{equation}
x_i^\prime(t)= F_i^x(\bfx(t)), \;\; i \in \Omega_K, \;\; t\ge 0,\;	  \bfx(0)=(1,0,..., 0),	\label{eqn:sys-dif-eqns}
\end{equation}
and showed the following result. 
\begin{Theorem}[Theorem 2.1, \cite{spencer2007birth}]
	\label{theo:exist-diff-eqn}
	Equation \eqref{eqn:sys-dif-eqns} has a unique solution. For all $i \in \Omega_K$ and $t> 0$,  $x_i(t)>0$. Furthermore $ \sum_{i\in \Omega_K} x_i(t) = 1$ and $\lim_{t\to \infty} x_\ompar(t) =1$. 
\end{Theorem}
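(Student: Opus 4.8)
The plan is to establish, in order: (i) local existence and uniqueness; (ii) conservation of total mass; (iii) forward invariance of the probability simplex, hence global existence; (iv) strict positivity of every coordinate for $t>0$; and (v) the large-time limit. For (i)--(ii): each $F_i^x$ is a polynomial in $\bfx=(x_j)_{j\in\Omega_K}$, hence $C^\infty$ and locally Lipschitz on $\RRR^{K+1}$, so Picard--Lindel\"of gives a unique maximal solution $\bfx:[0,T_{\max})\to\RRR^{K+1}$ with $\bfx(0)=(1,0,\ldots,0)$. A direct check of the displayed formulas for $\Delta(\vec{j};i)$ shows $\sum_{i\in\Omega_K}\Delta(\vec{j};i)=0$ for every $\vec{j}\in\Omega_K^4$ (the algebraic form of the fact that adding an edge does not change the number of vertices: the $\mathbf{1}_{\{j_1+j_2=i\}}$ contributions sum to $(j_1+j_2)\mathbf{1}_{\{j_1+j_2\le K\}}$, the $-\mathbf{1}_{\{j_1=i\}}-\mathbf{1}_{\{j_2=i\}}$ contributions to $-j_1\mathbf{1}_{\{j_1\le K\}}-j_2\mathbf{1}_{\{j_2\le K\}}$, and these are balanced by $\Delta(\vec{j};\ompar)$). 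Hence $\sum_i F_i^x(\bfx)\equiv 0$ as a polynomial identity, so $\frac{d}{dt}\sum_i x_i(t)=0$ and $\sum_i x_i(t)=1$ throughout $[0,T_{\max})$.

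For (iii): for $1\le i\le K$ I split $F_i^x(\bfx)=A_i(\bfx)-x_iB_i(\bfx)$, where $A_i$ collects the $\mathbf{1}_{\{j_1+j_2=i\}}$ (resp.\ $\mathbf{1}_{\{j_3+j_4=i\}}$) contributions and $-x_iB_i$ collects the $-\mathbf{1}_{\{j_1=i\}}-\mathbf{1}_{\{j_2=i\}}$ (resp.\ with $j_3,j_4$) contributions; every monomial of the latter visibly carries the factor $x_i$, so $A_i,B_i$ are polynomials nonnegative on $\RRR^{K+1}_{\ge0}$. Since $\Delta(\vec{j};\ompar)\ge0$ always, $F_\ompar^x\ge0$ on $\RRR^{K+1}_{\ge0}$ as well. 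Thus the system is quasi-positive: $\bfx\ge0$ with $x_i=0$ forces $F_i^x(\bfx)\ge0$. A standard invariance argument (compare $\bfx(t)$ with the perturbed curve $\bfx(t)+\eps e^{\lambda t}\mathbf{1}$ for $\lambda$ exceeding $\sqrt{K+1}$ times the local Lipschitz constant, then let $\eps\downarrow0$) then yields $\bfx(t)\in\RRR^{K+1}_{\ge0}$ on $[0,T_{\max})$. Together with (ii), $\bfx(t)$ remains in the compact simplex, so the solution cannot blow up and $T_{\max}=\infty$.

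For (iv): first $A_1\equiv0$ (no $\vec{j}$ has $j_1+j_2=1$), so $x_1(t)=\exp\bigl(-\int_0^t B_1(\bfx(s))\,ds\bigr)>0$ for all $t$. For $2\le i\le K$, the quadruple $\vec{j}=(1,i-1,1,i-1)$ has $\Delta(\vec{j};i)=i$ whether or not $\vec{j}\in F$, so the monomial $x_1^2x_{i-1}^2$ occurs in $A_i$ with a positive coefficient; hence, by induction on $i$, $A_i(\bfx(t))\ge c\,x_1(t)^2x_{i-1}(t)^2>0$ for $t>0$, and since $x_i(0)=0$ the variation-of-constants formula $x_i(t)=\int_0^t A_i(\bfx(s))\exp\bigl(-\int_s^t B_i(\bfx(u))\,du\bigr)\,ds$ gives $x_i(t)>0$ for $t>0$. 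Finally $\vec{j}=(K,K,K,K)$ has $\Delta(\vec{j};\ompar)=2K$ whether or not $\vec{j}\in F$, so $F_\ompar^x(\bfx(t))\ge K\,x_K(t)^4>0$ for $t>0$, whence $x_\ompar(t)=\int_0^t F_\ompar^x(\bfx(s))\,ds>0$ for $t>0$.

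For (v): since $x_\ompar(t)=1-\sum_{i=1}^K x_i(t)$, it suffices to show $x_i(t)\to0$ for each $1\le i\le K$. The quadruple $(i,i,i,i)$ has $\Delta((i,i,i,i);i)=-2i$, which forces $B_i(\bfx)\ge i\,x_i^3\ge x_i^3$ on the simplex, while every monomial of $A_i$ carries a factor $x_j$ with $j\le i-1$, so $A_i(\bfx)\le C_i\sum_{j<i}x_j$ on the simplex. The base case is $x_1'=-x_1B_1\le-x_1^4$, giving $x_1(t)\le(1+3t)^{-1/3}\to0$. Inductively, if $x_j(t)\to0$ for all $j<i$ then $A_i(\bfx(t))\to0$, and from $x_i'\le A_i(\bfx(t))-x_i^4$ an elementary comparison (once $A_i(\bfx(t))<\tfrac12\eps^4$, $x_i$ cannot remain above $\eps$) yields $x_i(t)\to0$; hence $x_\ompar(t)\to1$. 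The technically most delicate point is the invariance argument of step (iii), and the conceptual crux is step (v); both hinge on two algebraic observations, namely the quasi-positive factorization $F_i^x=A_i-x_iB_i$ together with the cubic self-depletion bound $B_i(\bfx)\gtrsim x_i^3$ (from the "diagonal" quadruple $(i,i,i,i)$), and the fact that $A_i$ is driven only by strictly smaller components.
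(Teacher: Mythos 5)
The paper states this result as Theorem 2.1 of Spencer and Wormald \cite{spencer2007birth} and supplies no proof of its own, so there is no in-paper argument to compare against; your proof is a correct, self-contained ODE argument. The decomposition $F_i^x=A_i-x_iB_i$ with $A_i,B_i\ge0$ on the nonnegative orthant is valid because the positive indicator $\ind_{\{j_1+j_2=i\}}$ and the negative indicators $\ind_{\{j_1=i\}},\ind_{\{j_2=i\}}$ can never fire for the same quadruple (simultaneous firing would force some $j_\ell=0$, excluded since $j_\ell\ge1$), and every negative monomial manifestly carries the factor $x_i$; together with $F_\ompar^x\ge0$ this makes the system quasi-positive and the simplex forward-invariant, whence global existence. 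The positivity induction is correct: $A_1\equiv0$ gives $x_1(t)=\exp(-\int_0^t B_1(\bfx(s))\,ds)>0$ for all $t\ge0$, and the quadruple $(1,i-1,1,i-1)$ has $\Delta(\vec{j};i)=i$ regardless of its membership in $F$, so $A_i$ contains the monomial $x_1^2x_{i-1}^2$ with a strictly positive coefficient (and $A_i$ has no negative coefficients to cancel it), so the variation-of-constants formula yields $x_i(t)>0$ for $t>0$ by induction, and $F_\ompar^x\ge Kx_K^4$ then gives $x_\ompar(t)>0$. The decay induction is likewise sound: the diagonal quadruple $(i,i,i,i)$ contributes $\Delta=-2i$ and hence $B_i\ge ix_i^3$, every monomial of $A_i$ carries a factor $x_j$ with $j<i$ and is therefore $\le x_j$ on the simplex, and the comparison $x_i'\le A_i(\bfx(t))-x_i^4$ together with your elementary one-sided argument gives $x_i(t)\to0$ for each $i\le K$, hence $x_\ompar(t)\to1$ by conservation. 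The only presentational remark is that the forward-invariance step could be stated more crisply by appealing directly to the standard quasi-positivity (Nagumo/Kamke) criterion rather than the ad hoc $\eps e^{\lambda t}\vone$ perturbation; the verification you already give, namely $F_i^x(\bfx)=A_i(\bfx)\ge0$ whenever $x_i=0$ and $\bfx\ge0$, is exactly what that criterion requires.
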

The paper \cite{spencer2007birth}  also showed that  the functions $\barx_i(t) \convp x_i(t)$ for each fixed $t\geq 0$. We will need precise rates of convergence for our proofs for which we establish the following result.
\begin{Lemma}	
	\label{lemma:approx-xi}
Fix $\delta \in (0,1/2)$ and $T> 0$. There exist   $C_1, C_2 \in (0, \infty)$ such that for all $n$, 
	\[\prob\left( \sup_{i \in \Omega_K} \sup_{s \in [0,T]} |\barx_i(t)-x_i(t) | > n^{-\delta}\right) <C_1 \exp\left( -C_2 n^{1-2\delta}\right).\]
\end{Lemma}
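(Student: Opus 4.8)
This is a differential‑equation‑method estimate, and I would obtain it in three steps: a deterministic Gr\"onwall comparison that reduces the claim to a martingale tail bound, then that tail bound via a Bernstein/Freedman inequality, and finally the bookkeeping of constants.

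First, fix $i\in\Omega_K$ and combine the semimartingale decomposition \eqref{eqn:semi-mart-def}--\eqref{eqn:error-xi-trend} of $\barx_i$ with $\barx_i(0)=x_i(0)$ and the ODE \eqref{eqn:sys-dif-eqns} to write
\[
\barx_i(t)-x_i(t)=\int_0^t\!\big[\bfd(\barx_i)(s)-F_i^x(\bar{\bfx}(s))\big]ds+\int_0^t\!\big[F_i^x(\bar{\bfx}(s))-F_i^x(\bfx(s))\big]ds+\vM(\barx_i)(t).
\]
By \eqref{eqn:error-xi-trend} the first integrand is at most $2K^2/n$ in modulus; since $F_i^x$ is a fixed polynomial (see \eqref{eqn:def-fxi}) it is Lipschitz on the cube $[0,1]^{K+1}$ with some constant $L=L(K,F)$, and both $\bar{\bfx}(s)$ and $\bfx(s)$ take values in that cube (the former because $0\le X_j\le n$, the latter by Theorem \ref{theo:exist-diff-eqn}). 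Writing $D(t)=\sum_{j\in\Omega_K}|\barx_j(t)-x_j(t)|$ and $D^*(t)=\sup_{s\le t}D(s)$, summing the last display over $i\in\Omega_K$ and taking suprema yields an inequality of the form
\[
D^*(t)\le \frac{c_0}{n}+\sum_{j\in\Omega_K}\sup_{s\le T}\big|\vM(\barx_j)(s)\big|+(K+1)L\int_0^t D^*(s)\,ds,\qquad t\le T,
\]
with $c_0=c_0(K,T)$, whence Gr\"onwall gives $D^*(T)\le e^{(K+1)LT}\big(c_0/n+\sum_{j}\sup_{s\le T}|\vM(\barx_j)(s)|\big)$. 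Since $\delta<1$, for large $n$ the term $c_0e^{(K+1)LT}/n$ is below $\tfrac12 n^{-\delta}$, so it suffices to bound, for each $j\in\Omega_K$, the probability that $\sup_{s\le T}|\vM(\barx_j)(s)|$ exceeds $c_1 n^{-\delta}$, where $c_1=c_1(K,T)$ is chosen small enough that $(K+1)c_1e^{(K+1)LT}\le\tfrac12$.

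Second, I would control the martingale $\vM(\barx_j)$ through its bounded jumps and bounded predictable quadratic variation. The jumps of $\vM(\barx_j)$ equal those of $\barx_j$, hence (using $|\Delta(\vec v;j)|\le 2K$ from the jump description preceding \eqref{eqn:fi-eqn}, and noting redundant rounds cause no jump) are bounded by $b:=2K/n$. Each of the $n^4$ quadruples fires at rate $1/(2n^3)$ and adds at most $b^2$ to the quadratic variation at a jump, so $\langle\vM(\barx_j),\vM(\barx_j)\rangle(t)\le (2K^2/n)\,t\le 2K^2T/n=:V$ for all $t\le T$, \emph{deterministically}. Freedman's inequality for martingales with bounded jumps then gives $\prob\big(\sup_{s\le T}|\vM(\barx_j)(s)|>\lambda\big)\le 2\exp\big(-\lambda^2/(2V+2b\lambda)\big)$; taking $\lambda=c_1 n^{-\delta}$ and using $\delta<1/2$ (so $b\lambda=O(n^{-1-\delta})\ll V=O(n^{-1})$), the exponent is at least $C_2 n^{1-2\delta}$ for a suitable $C_2=C_2(K,T,c_1)>0$ and all large $n$.

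Finally, a union bound over the $K+1$ coordinates together with the reduction above gives $\prob\big(\sup_{i\in\Omega_K}\sup_{s\le T}|\barx_i(s)-x_i(s)|>n^{-\delta}\big)\le 2(K+1)e^{-C_2 n^{1-2\delta}}$ for all large $n$, and the remaining finitely many $n$ are absorbed by enlarging $C_1$ so that the bound is trivially at least $1$ there. The step I expect to need the most care is the martingale estimate: the $n^{1-2\delta}$ in the exponent rests on having \emph{deterministic} $O(1/n)$ bounds on both the jump sizes and the per‑unit‑time predictable quadratic variation of $\vM(\barx_j)$ — which follow from the uniform bound $|\Delta(\vec v;j)|\le 2K$ and from redundant rounds being inert — whereas the Lipschitz/Gr\"onwall comparison with the limiting ODE is routine.
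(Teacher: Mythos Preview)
Your proposal is correct and follows essentially the same route as the paper: a Lipschitz/Gr\"onwall reduction to a martingale tail bound, then a Bernstein-type inequality using the deterministic $O(1/n)$ bounds on jumps and predictable quadratic variation. The only cosmetic differences are that the paper uses $D(t)=\sup_{i}|\barx_i-x_i|$ rather than the sum, and invokes the Liptser--Shiryayev exponential bound (optimised in $\lambda$) in place of Freedman's inequality; these yield the same $\exp(-Cn^{1-2\delta})$ rate.
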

\begin{proof}
% The key ingredient in the proof of the above lemma is the following basic martingale result.
% 
% \begin{Theorem}[]
% 	\label{lemma:norris-martingale}
% 	Let $\{M(t)\}_{t \ge 0}$ be a local square integrable local martingale, suppose $|\Delta M(t)| \le c$ for all $ 0 \le t \le T$ and quadratic variation satisfying $\langle M, M \rangle_T < Q$. Then for any $\alpha > 0$ we have
% 	
% \end{Theorem}

 % denote $\bar \bfx(t) = (\bar x_i(t))_{i \in \Omega_K}$. For $i \in \Omega_K$, similar to the analysis \cite{spencer2007birth} denote $\E[\Delta \bar x_i(t)| \FF_t ]$ to be the $\FF_t$-adapted stochastic process $f_i(t)$ such that
 % $$ \bar x_i(t) - \int_0^t f_i(s)ds \mbox{ is a $\FF_t$-martingale.} $$
 % Note that $\bar x_i(t)$ has jumps of size $\Delta(\vec{j};i)/n$ at the rate $\frac{1}{2n^3} \cdot X_{j_1}(t)X_{j_2}(t)X_{j_3}(t)X_{j_4}(t)$ except for redundant rounds. Thus we have
 % $$\E[\Delta \bar x_i(t)| \FF_t] \approx \sum_{\vec{j} \in \Omega^4} \frac{\Delta(\vec{j};i)}{n} \cdot \frac{1}{2n^3} \cdot X_{j_1}(t)X_{j_2}(t)X_{j_3}(t)X_{j_4}(t) = F^x_i(\bar \bfx(t))$$
 % The error of the above approximation can be controlled precisely,
 % %The difference between $\E[\Delta \bar x_i(t)| \FF_t ]$ and $F^x_i(\bar \bfx(t))$ comes from the redundant rounds when $\Delta(\vec{j}; i) \neq 0$ but $\bfx_i(t)$ does not change. 
 % since the jump size is bounded by $2K/n$ and the rate of redundant rounds is bounded by $2 \cdot Kn^3 \cdot \frac{1}{2n^3}$, thus we have
 
Note that $F_i^x(\cdot)$ is a Lipchitz function, indeed for $\bfx, \tilde \bfx \in [0,1]^{K+1}$,
$$ |F_i^x(\bfx) - F_i^x(\tilde \bfx)| \le 4K(K+1)^4  \sum_{i \in \Omega_K}|x_i-\tilde x_i| \le 4K(K+1)^5 \sup_{i \in \Omega_K}|x_i-\tilde x_i|.$$
Write  $D(t) := \sup_{i \in \Omega_K}|\bar x_i(t)-x_i(t)|$ and $M_i(t) :=\vM(\bar x_i)(t)$. Using \eqref{eqn:error-xi-trend}, we get for all $i \in \Omega_K$ and $ t \in [0,T]$,
\begin{align*}
	|\bar x_i(t)-x_i(t)| 
	\le& \int_0^t |F_i^x(\bar \bfx(s))- F_i^x(\bfx(s))|ds + T \cdot \frac{2K^2}{n} + |M_i(t)|\\
	\le& 4K(K+1)^5 \int_0^t D(s) ds + T \cdot \frac{2K^2}{n} + |M_i(t)|.
\end{align*}
Taking $\sup_{i\in \Omega_K}$ on both sides and using Gronwall's lemma we have
\bes
\sup_{t \in [0,T]} D(t) \le \left( \sup_{i\in \Omega_K} \sup_{t \in [0,T]}|M_i(t)| + \frac{2TK^2}{n} \right) e^{4K(K+1)^5 T}.
\ees
Thus, for a suitable $d_1 \in (0, \infty)$,
\be
\prob\set{ \sup_{t \in [0,T]} D(t) > n^{-\delta} } \le \sum_{i \in \Omega_K} \prob\set{ \sup_{t \in [0,T]}|M_i(t)| > d_1 n^{-\delta} }. \label{eqn:1964}
\ee
To complete the proof we will use exponential tail bounds for martingales. From Theorem 5 in Section 4.13 of \cite{liptser-mart-book} we have that, for a square integrable martingale $M$ with $M(0) = 0$,   $|\Delta M(t)| \le c$ for all $t$, and  $\langle M, M \rangle(T) \leq Q$, a.s., 
for some $c,Q \in (0, \infty)$, 
$$ \prob\set{ \sup_{0 \le t \le T}|M(t)| > \alpha} \le 2 \exp \set{ -\sup_{\lambda > 0}\left[\alpha \lambda - Q \psi(\lambda)\right] }, \mbox{ for all } \alpha > 0,$$
where $\psi(\lambda) = \frac{e^{\lambda c}-1-\lambda c}{c^2}$. Optimizing over $\lambda$, we get the bound
\be
\prob\set{ \sup_{0 \le t \le T}|M(t)| > \alpha} \le 2 \exp \set{ - \frac{\alpha}{c} \log\left( 1+ \frac{\alpha c}{Q} \right) + \left[ \frac{\alpha}{c} - \frac{Q}{c^2} \log\left( 1+ \frac{\alpha c}{Q} \right)  \right] }. \label{eqn:norris-bound}
\ee
    In our context, note that for any $i\in \Omega_K$,  $| \Delta M_i(t)|=|\Delta \bar x_i(t)| \le 2K/n$. Also, the total rate of jumps is bounded by $n^4 \cdot \frac{1}{2n^3}$. Thus for all $i\in \Omega_K$, the quadratic variation process 
$$ \langle M_i, M_i \rangle(T) \leq   \int_0^T \left(\frac{2K}{n}\right)^2 \times \frac{n^4}{2n^3} dt = \frac{2K^2T}{n}.$$
Taking $\alpha = d_1 n^{-\delta}, Q={2K^2T}/{n}$ and $c = 2K/n$ in \eqref{eqn:norris-bound} completes the proof. 
\end{proof}

\subsection{Evolution of components of size larger than $K$}
\label{sec:evol-bst-st}
Let $\BS^*(t)$ denote the subgraph of $\BS(t)$ consisting of components of size greater than $K$. In this section, we will focus on the dynamics and evolution of $\BS^*(t)$. Note that $\BS^*(0) = \emptyset$, i.e. a graph with no vertices or edges. As time progresses components of size less than $K$ merge and components of size greater than $K$ emerge. Three distinct types of events affect the evolution $\BS^*(t)$:
\begin{enumeraten}
	\item {\bf Immigration:} This occurs when two components of size $\le K$ merge into a single component of size $> K$. We view the resulting component as  a new immigrant into $\BS^*(t)$. Note that the first component to appear in $\BS^*(t)$ is an immigrant. 
	\item {\bf Attachments}: This occurs when a component of size $\le K$ gets linked to a component of size larger than $K$. The former component enters $\BS^*(t)$ via attaching itself to a component of size larger than $K$.
	\item {\bf Edge formation}: Two distinct components of size larger than $ K$  merge into a single component via formation of an edge between these components. In this case, the vertex set of $\BS^*(t)$ remains unchanged.
\end{enumeraten}
We now introduce some functions that describe the rate of occurrence for each of the three types of events. For $i_1,i_2 \in \Omega_K$, define $F_{i_1,i_2}^x : [0,1]^{K+1} \to \RRR$ as
\be
\label{def:fij}
F_{i_1,i_2}^x(\bfx) = \frac{1}{2}\left( \sum_{\vec{j} \in F: \{j_1,j_2\}=\{i_1,i_2\}} x_{j_1}x_{j_2}x_{j_3}x_{j_4} + \sum_{\vec{j} \in F^c: \{j_3,j_4\}=\{i_1,i_2\}} x_{j_1}x_{j_2}x_{j_3}x_{j_4}\right).
\ee
% Similar to \eqref{eqn:def-fxi}, the multiplier $1/2$ comes from the total rate of edges in $\BS_t$, which is $n^4 \cdot \frac{1}{2n^3} = \frac{n}{2}$. By this definition,
For $i_1, i_2 \leq K$, denote $n \cdot R_{i_1, i_2}(t)$ as the rate at which two components of size $i_1, i_2$ merge. When $i_1 \neq i_2$, this rate
 is precisely 
\begin{align*}
(2n^3)^{-1}[\hspace{-.2in} \sum_{\sss\substack{\vec{j} \in F\\ \{j_1,j_2\}=\{i_1,i_2\}}}\hspace{-.3in} X_{j_1}(t)X_{j_2}(t)X_{j_3}(t)X_{j_4}(t) + \hspace{-.3in} \sum_{\sss\substack{\vec{j} \in F\\ \{j_3,j_4\}=\{i_1,i_2\}}}\hspace{-.3in} X_{j_1}(t)X_{j_2}(t)X_{j_3}(t)X_{j_4}(t)]:= n \cdot F_{i_1,i_2}^x(\bar \bfx(t)).
\end{align*}
Thus $R_{i_1, i_2}(t) = F_{i_1,i_2}^x(\bar \bfx(t))$. 
The case $i_1=i_2 \le K$ is more subtle due to redundant rounds linking vertices in the same component. The 
 rate of redundant rounds can be bounded by $ \frac{1}{2n^3} \cdot K n^3 \cdot 2=K$, from which it follows that 
$$|R_{i,i}(t) -  F_{i,i}^x(\bar \bfx(t))| \le \frac{K}{n}.$$
The case $i_1 = i_2 =\ompar$ corresponds to  creation of edges in $\BS^*(t)$
% .  Conditional on an edge being created between two components of size greater than $K$ (i.e. in $\BS^*(t)$), the probability of every edge in $\BS^*(t)$ is the same 
and $n \cdot F_{\ompar,\ompar}^x(\bar \bfx(t)) $ is the rate of creation of such edges.

We now give expressions for the  rates for the three types of events that govern the evolution of  $\BS^*(t)$. The convention followed for the rest of this section is that for $i_1, i_2\in\Omega_K$,  $ i_1 + i_2 = \ompar$ when the sum of is greater than $K$, and $\ompar + i_1 = i_1 +\ompar = \ompar$ for all $i_1 \in \Omega_K$.

\noindent{\bf I. Immigrating vertices: } For $1 \le i \le K$, write $\fI_i(t) := n \cdot a^*_i(t)$ for the rate at which components of size $K+i$ immigrate into $\BS^*(t)$ at time $t$. Using the above expressions for the rate of merger of components of various sizes we have 
\begin{equation}
\label{eqn:ai-diff}
	\left|a^*_i(t) - \sum_{ 1 \le i_i, i_2 \le K: i_1 +i_2=K+i} F_{i_1,i_2}^x(\bar \bfx(t))\right| \le \frac{K}{n}.
\end{equation}  
As before the error is due to redundant rounds which can only occur for $i_1 = i_2 = (K+i)/2$ (and when  $(K+i)/2$ is an integer). Now define  functions $F^a_i : [0,1]^{K+1} \to \RRR_+$, and $a_i(\cdot): [0,\infty) \to [0, \infty)$ by
\be
F^a_i(\bfx) = \sum_{ \sss \substack{1 \le i_i, i_2 \le K, \\ i_1 +i_2=K+i}} F_{i_1,i_2}^x(\bfx), \qquad a_i(t) = F^a_i(\bfx(t)), \label{eqn:def-fa-a}
\ee
where $\bfx(t)$ is as in \eqref{eqn:sys-dif-eqns}.
Then \eqref{eqn:ai-diff} says that 
\be
\sup_{t \in [0,\infty) }| a^*_i(t)-F^a_i(\bar \bfx(t))| \le K/n.	\label{eqn:error-a}
\ee
%Note that $a_i(t)$ is just the deterministic counterpart obtained by replacing the random $\bar \bfx(t)$ by their deterministic limits $\bfx(t)$. 
Note that for any $\delta <1$, the error term in \eqref{eqn:error-a} is $o(n^{-\delta})$. Using this observation along with the Lipschitz property of $F_{i_1, i_2}$, we have
from Lemma \ref{lemma:approx-xi}  that for any fixed $T> 0$ and $\delta < 1/2$,
\begin{equation}
\label{eqn:ai-aist-diff}
	\prob(\sup_{1\leq i\leq K}\sup_{s\in [0,T]}|a_i^*(t) - a_i(t)| > n^{-\delta}) \leq C_1 \exp(-C_2 n^{1-2\delta}).
\end{equation}
The constants $C_1, C_2$ here may be different from those in Lemma \ref{lemma:approx-xi}, however for notational ease we use the same symbols.
% \todo[inline]{Precisely, on need to take care of the error in \eqref{eqn:ai-diff}, and need different constants $C_1$ and $C_2$ as stated in the lemma. I would suggest make a comment that the order $O(1/n)$ error is omitted.}
% % $$a_i(t) = F^a_i(\bfx(t)) \approx F^a_i(\bar \bfx(t)) \approx a^*_i(t).$$

\noindent{\bf II. Attachments:} Fix $1\le i\le K$ and  a vertex $v$ contained in a component in $\BS^*(t)$.  Let, for $i \le K$, $c_i^*(t)$ denote the rate at which a component of size $i$ attaches itself to the component of $v$ through an edge connecting the former component to $v$. 
%By the definition of bounded-size rules, this is the same for every vertex in $\BS^*(t)$. 
This rate can be calculated as follows. First note that the total rate of merger between a component of size $i$ with a component in $\BS^*(t)$ is $n \cdot F_{i,\ompar}^x(\bar \bfx(t))$. Since there are $X_\ompar(t)$  vertices in
$\BS^*(t)$ each of which has the same probability of being the vertex through  which this attachment event happens, the rate at which a component of size $i$ attaches to $v$ is given by $n F_{i,\ompar}^x(\bar \bfx(t)) / X_\ompar(t) = F_{i,\ompar}^x(\bar \bfx(t))/{\bar x}_\ompar(t) $. 
Since $x_\ompar$ is a factor of $F_{i,\ompar}^x(\bfx)$, $c^*_i(t)$ is  a polynomial in ${\bar \bfx}(t)$. Define the functions $F^c_i : [0,1]^{K+1} \to \RRR$ and $c_i(\cdot): \Rbold_+ \to \Rbold_+$ as  
\be
F^c_i(\bfx) = F_{i,\ompar}^x(\bfx)/x_\ompar, \qquad  c_i(t) = F^c_i(\bfx(t)). \label{eqn:def-fc-c}
\ee
%where as before $\bfx(t)$ are the deterministic limits of ${\hat \bfx}(t)$.  
Then $c_i^*(t) = F_i^c({\bar \bfx}(t))$. Once again using Lemma \ref{lemma:approx-xi} we get for any $\delta <1/2$ and  $T> 0$,
\begin{equation}
\label{eqn:ci-cist-diff}
	\prob(\sup_{1\leq i\leq K}\sup_{s\in [0,T]}|c_i^*(t) - c_i(t)| > n^{-\delta}) \leq C_1 \exp(-C_2 n^{1-2\delta}). 
\end{equation}

% $$ c_i(t) = F^c_i(\bfx(t)) \approx F^c_i(\bar \bfx(t)) = c^*_i(t). $$

\noindent{\bf III. Edge formation:}
Note that the rate of creation of an edge between vertices in $\BS^*(t)$ is $n F_{\ompar, \ompar}^x(\bar \bfx(t))$. Since such an edge is equally likely  to be between any of the $X_\ompar^2(t)$ pairs of vertices in $\BS^*(t)$, we have that 
 the rate of creation of an edge between specified vertices  $\set{v_1 , v_2}$ with $v_1, v_2 \in \BS^*(t)$ is $b^*(t)/n$ where $b^*(t) = F_{\ompar, \ompar}^x({\bar\bfx}(t))/x_\ompar^2(t)$.   
Define $F^b : [0,1]^{K+1} \to \RRR$ and $b(\cdot) : \Rbold_+ \to \Rbold_+$ as
\be
F^b(\bfx) = F_{\ompar, \ompar}^x(\bfx)/ x^2_\ompar \mbox{ and } b(t)=F^b(\bfx(t)). \label{eqn:def-fb-b}
\ee
Once more it is clear that $F^b(\bfx)$ is a polynomial  and furthermore $b^*(t) = F^b({\bar \bfx}(t))$, so  by Lemma \ref{lemma:approx-xi}, for any $\delta <1/2$ and  $T> 0$,
\begin{equation}
\label{eqn:bi-bist-diff}
	\prob(\sup_{s\in [0,T]}|b^*(t) - b(t)| > n^{-\delta}) \leq C_1 \exp(-C_2 n^{1-2\delta}). 
\end{equation}
Write
$\va(t):=\set{a_i(t)}_{1\leq i\leq K}$ and $ \vc(t):=\set{c_i(t)}_{1\leq i\leq K} $.  We refer to $(\va, b, \vc)$  as rate functions.  In the proposition below we
 collect some properties of these rate functions. These properties  are easy consequences of  Theorem \ref{theo:exist-diff-eqn}. 
\begin{Proposition}
		\label{prop:error-abc}
	 %Fix $1 \le i \le K$ and consider the functions $a_i(t)=\fG^a_i(\bfx(t))$, $c_i(t) = \fG^c_i(\bfx(t))$ and $b(t)=\fG^b(\bfx(t))$ defined above with $\bfx(t)$ the solution of \eqref{eqn:sys-dif-eqns}. 
	\begin{enumeratea}
		\item For all $1\leq i\leq K$ and $t> 0$, $b(t), a_i(t), c_i(t)> 0$.
		\item We have
		 $$ \|\va\|_\infty := \sup_{t \ge 0} \sum_{i=1}^K a_i(t) \le 1/2, \;\; \|\vc\|_\infty := \sup_{t \ge 0}\sum_{i=1}^K c_i(t) \le 1/2,\;\; \|b\|_\infty := \sup_{t \ge 0} b(t) \le 1/2 .$$
		\item  $\lim_{t \to \infty} b(t) = 1/2$.
	\end{enumeratea}
\end{Proposition}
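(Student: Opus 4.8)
The plan is to read off all three assertions by evaluating the polynomials $F_i^a$, $F_i^c$ and $F^b$ along the solution $\bfx(\cdot)=(x_k(\cdot))_{k\in\Omega_K}$ of the system \eqref{eqn:sys-dif-eqns} and using the three facts from Theorem \ref{theo:exist-diff-eqn}: (i) $x_k(t)>0$ for every $k\in\Omega_K$ and $t>0$; (ii) $\sum_{k\in\Omega_K}x_k(t)=1$ for all $t$; (iii) $x_\ompar(t)\to1$ as $t\to\infty$. The one structural observation I would set up at the start is that, since $F$ and $F^c$ partition $\Omega_K^4$, each quadruple $\vec j\in\Omega_K^4$ contributes its monomial $\tfrac12 x_{j_1}x_{j_2}x_{j_3}x_{j_4}$, with a nonnegative coefficient, to exactly one term of the ``master sum'' $\sum_{\{k_1,k_2\}\subseteq\Omega_K}F_{k_1,k_2}^x(\bfx)$ over unordered pairs; hence this master sum equals $\tfrac12(\sum_{k\in\Omega_K}x_k)^4$, which is $\tfrac12$ along the solution by (ii).

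For part (a) I would exhibit, for each of the three families, one quadruple forcing a strictly positive term. For $a_i(t)=F_i^a(\bfx(t))=\sum_{1\le i_1,i_2\le K,\,i_1+i_2=K+i}F_{i_1,i_2}^x(\bfx(t))$ (a nonempty sum, e.g.\ $(i_1,i_2)=(K,i)$), the quadruple $\vec j=(i_1,i_2,i_1,i_2)$ lies in $F$ or in $F^c$, and in either case its monomial $x_{i_1}(t)^2x_{i_2}(t)^2$ occurs in $F_{i_1,i_2}^x(\bfx(t))$; by (i) it is positive and all other terms are nonnegative, so $a_i(t)>0$. The same device with $(\ompar,i,\ompar,i)$ gives $F_{i,\ompar}^x(\bfx(t))>0$ and with $(\ompar,\ompar,\ompar,\ompar)$ gives $F_{\ompar,\ompar}^x(\bfx(t))>0$. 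Since every monomial of $F_{i,\ompar}^x$ carries a factor $x_\ompar$ and every monomial of $F_{\ompar,\ompar}^x$ a factor $x_\ompar^2$, the quotients $F_i^c=F_{i,\ompar}^x/x_\ompar$ and $F^b=F_{\ompar,\ompar}^x/x_\ompar^2$ are genuine polynomials, and $c_i(t),b(t)>0$ follows by dividing the positive numerators by $x_\ompar(t)>0$ (resp.\ $x_\ompar(t)^2>0$).

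For part (b), the bound on $\va$ drops out of the master-sum observation: $\sum_{i=1}^K a_i(t)$ is precisely the part of the master sum coming from unordered pairs $\{i_1,i_2\}$ with $1\le i_1,i_2\le K$ and $i_1+i_2>K$, and all summands are nonnegative, so $\sum_{i=1}^K a_i(t)\le\tfrac12$ and $\|\va\|_\infty\le\tfrac12$. For $\|\vc\|_\infty$ and $\|b\|_\infty$ I would run the same idea after cancelling the $x_\ompar$-powers: expand $c_i(t)$ and $b(t)$ as sums of monomials in $\bfx(t)$, keeping track via the $F/F^c$ partition of exactly which quadruples survive the cancellation, and bound the resulting weighted sums by the appropriate power of $\sum_k x_k(t)=1$. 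I expect this last bookkeeping to be the real content of the proof --- arranging that what is left after removing $x_\ompar$ (resp.\ $x_\ompar^2$) still reassembles into a sum bounded by $\tfrac12$ rather than merely by a larger constant; everything else is formal.

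For part (c) I would combine (ii) and (iii): $\sum_{k\ne\ompar}x_k(t)=1-x_\ompar(t)\to0$, so $x_k(t)\to0$ for each $k\ne\ompar$ and $\bfx(t)\to\bfe_\ompar$, the vector whose $\ompar$-coordinate is $1$ and whose other coordinates vanish. Since $F^b$ is a polynomial, $b(t)=F^b(\bfx(t))\to F^b(\bfe_\ompar)$, and the only monomial of $F_{\ompar,\ompar}^x$ not killed at $\bfe_\ompar$ is $x_\ompar^4$, with coefficient $\tfrac12$ because $(\ompar,\ompar,\ompar,\ompar)$ lies in exactly one of $F,F^c$; hence $F_{\ompar,\ompar}^x(\bfe_\ompar)=\tfrac12$ and, the $\ompar$-coordinate of $\bfe_\ompar$ being $1$, $b(t)\to\tfrac12$.
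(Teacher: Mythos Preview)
Your arguments for part~(a), for the bound $\|\va\|_\infty\le\tfrac12$ in~(b), and for part~(c) are correct and coincide with the paper's: the paper simply says (a) ``follows from Theorem~\ref{theo:exist-diff-eqn} and the definition'', proves the $\va$-bound exactly via your master-sum observation, and obtains~(c) by combining the lower bound $F_{\ompar,\ompar}^x(\bfx)\ge x_\ompar^4/2$ with the upper bound from~(b). Your direct-limit argument for~(c) is actually cleaner, since it does not rely on the $\|b\|_\infty$ bound.

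Your caution about the remaining two bounds in~(b) is well placed, but not for the reason you expect: the bookkeeping you describe cannot be completed, because the stated inequality $\|b\|_\infty\le\tfrac12$ (and likewise $\|\vc\|_\infty\le\tfrac12$) is false for general bounded-size rules. After cancelling $x_\ompar^2$ from $F_{\ompar,\ompar}^x$ one obtains
\[
b=\tfrac12\sum_{(j_3,j_4):\,(\ompar,\ompar,j_3,j_4)\in F} x_{j_3}x_{j_4}
\;+\;\tfrac12\sum_{(j_1,j_2):\,(j_1,j_2,\ompar,\ompar)\in F^c} x_{j_1}x_{j_2},
\]
and the two index sets are governed by membership of \emph{different} quadruples in $F$, so they need not be complementary. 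For a concrete counterexample take $K=1$ and $F=\{(\ompar,\ompar,j_3,j_4):j_3,j_4\in\Omega_K\}$: a direct computation gives $b(t)=1-\tfrac12 x_\ompar(t)^2$ and $c_1(t)=x_1(t)(1-x_\ompar(t)^2)$, so $b(0)=c_1(0)=1$. The paper's ``follow similarly'' glosses over this. What the naive estimate \emph{does} give---bounding each of the two sums above by $(\sum_{k\in\Omega_K}x_k)^2=1$---is $\|b\|_\infty\le 1$ and, by the analogous computation, $\|\vc\|_\infty\le 1$. Fortunately this is all that is actually used downstream: the constants $\tfrac12$ never reappear, and the only role of~(b) is to justify the standing convention that rate functions and their small perturbations are uniformly bounded.
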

{\bf Proof:} Part(a) follows from Theorem \ref{theo:exist-diff-eqn} and the definition of the functions. For (b) observe that 
$$ \sum_{i=1}^K a_i(t) = \sum_{i=1}^K F^a_i(\bfx(t)) \le \frac{1}{2}\sum_{ \vec{j} \in \Omega_K} x_{j_1}(t)x_{j_2}(t)x_{j_3}(t)x_{j_4}(t)= \frac{1}{2}\left[\sum_{i\in \Omega_K}x_i(t)\right]^4 =\frac{1}{2}. $$
Statements on $\|c\|_\infty,  \|b\|_\infty$ follow similarly. 

For (c), note that $F_{\ompar,\ompar}^x(\bfx) \ge x_\ompar^4/2$ since when all the four vertices selected are  from components of size greater than $K$, two components of  size greater than $K$ will surely be linked. From Theorem \ref{theo:exist-diff-eqn} 
 $\lim_{t \to \infty} x_\ompar(t)=1$ and thus  $ \limsup_{t\to \infty} b(t) \ge x^2_\ompar (t)/2$. The result now follows on combining this with
(b). \qed

% With the above preparation, we will study the dynamic of $\calS_2(t)$ and $\calS_3(t)$ in the next section.\\

\subsection{Connection to inhomogeneous random graphs}
\label{sec:other-random-graph-processes}
In this section, we describe the inhomogeneous random graph (IRG) models that have been studied  extensively in \cite{bollobas-riordan-janson}, and then approximate $\BS^*(t)$   by a special class of such models. 
We will in fact use a variation of the models in \cite{bollobas-riordan-janson} which uses a suitable weight function to measure the volume of a component.  
We begin by defining the basic ingredients in this model. Let $\XX$ be a Polish space, equipped with the Borel $\sigma$-field $\cB(\cX)$. We shall sometimes refer to this as the {\bf type space}. Let $\mu$ be a non-atomic finite measure on $\XX$ which we shall call the {\bf type measure} on $\cX$. A {\bf kernel} will be a symmetric non-negative product measurable function $\kappa: \XX \times \XX \to \RRR $ and a {\bf weight function} $\phi: \XX \to \RRR$ will be a non-negative measurable function on $\XX$. We call such a quadruple $\{\XX, \mu, \kappa, \phi\}$ a {\bf basic structure}. 

{\bf The inhomogeneous random graph with weight function (IRG):} 
Associated with a basic structure $\{\XX, \mu, \kappa, \phi\}$, 
the IRG model $\RG^{\sss (n)}(\XX, \mu, \kappa, \phi)$ is a random graph described as follows: 
\begin{enumeratea}
	\item {\bf Vertices:} the vertex set $\VV$ of this random graph is a Poisson point process on the space $\XX$ with intensity measure $n \mu$.
	\item {\bf Edges:}  an edge is added between vertices $\bfx, \bfy \in \VV$ with probability $ 1 \wedge \frac{\kappa(\bfx,\bfy)}{n}$, independent across different pairs. This defines the random graph. 
	\item {\bf Volume:} The volume of a component $\CC$ of $\RG^{\sss(n)}(\cX,\mu,\kappa,\phi)$ is defined as 
	\begin{equation}
	\label{eqn:vol-defn}
	\vol_{\phi}(\CC):=\sum_{x \in \CC} \phi(x). 	
	\end{equation}
 \end{enumeratea}
For the rest of this section we  take
\begin{equation}
	\label{eqn:def-xx}
	\XX:= [0,\infty) \times \WW \text{ where } \WW:=\DD([0, \infty): \NNN_0).
\end{equation}
We first describe how, for each $t >0$, $\BS^*(t)$ can be identified with a random graph with vertex set in $\XX$. 
% We will establish the mapping
% $$ \{\BS^*(s)\}_{s \le t} \mapsto \mirror(t), $$
% where $\mirror(t)$ is a random graph with vertices in $\XX$. We also define 
% \begin{equation}
% 	\XX_t:= [0,t] \times \DD([0, t]: \NNN).
% \end{equation}
% $\XX_t$ would be a convenient to work on when we describe the connection between BSR and IRG models. Then extension from $\XX_t$ to $\XX$ is straight forward. We also will also construct the graph $\mirror(t)$ with vertices from the set $\XX_t$.
% 
% \todo[inline]{new description of $\BS^*(t)$ to $\mirror(t)$. }
% \todo[inline]{We call components of size $\le$ K small components, and call components of size $> K$ large components.}
Recall the three  types of events governing the evolution of $\BS^*(t)$, described in Section \ref{sec:evol-bst-st}. 
Each component in $\BS^*(t)$ contains at least one group of $K+i$ vertices, $i = 1, \cdots , K$ which appeared at instant   $s \le t$ in $\BS^*(\cdot)$, as an immigrant.
We denote the collection of all such groups 
%(we refer to them once more as components) 
as $\Imm(t)$.  For $\clc \in \Imm(t)$, we denote
by $\tau_\clc\in (0, t]$ the instant this immigrant appears.  Also, to each $\clc \in \Imm(t)$, we associate a path in $\cD([0,\infty): \NNN_0)$, denoted as
$w_{\clc}$, such that $w_{\clc}(s) = 0$ for all $s < \tau_\clc$; $w_{\clc}(s) = w_{\clc}(t)$ for all $s \in [t,\infty)$; and for $s \in [\tau_\clc,t]$,
$w_{\clc}(s) = | \cla_{\clc}(s)|$, where $\cla_{\clc}(s)$ denotes the component that is formed by $\clc$ and all the attachment components that link
to $\clc$ over the time interval $[\tau_\clc,s]$.
Then $\{(\tau_\clc, w_{\clc}): \clc \in \Imm(t)\}$ is a point process on $\XX$ and forms the vertex set of a random graph which we denote by $\mirror(t)$.
We form edges between any two vertices $(\tau_\clc, w_{\clc})$, $(\tau'_\clc, w'_{\clc})$ in $\mirror(t)$ if the components $\cla_{\clc}(t)$
and $\cla_{\clc'}(t)$ are directly linked by some edge in $\BS^*(t)$.\\
%************ old description end, can be skipped *********************\\

Define, for $t >0$,  the weight function $\phi_t: \XX \to [0, \infty)$ as
\begin{equation}
	\label{eqn:def-phi}
	\phi_t(\bfx) = \phi_t(s,w) = w(t), \qquad \bfx=(s,w) \in \cX.
\end{equation}
Note that by construction there is a one to one correspondence between the components in $\BS^*(t)$ and the components in
$\mirror(t)$.  
% For a component $I$ in a graph with vertex set in $\XX$, define its volume with respect to a weight function $\phi$, denoted as $\vol_{\phi}(I)$, as
% $$
% \vol_{\phi}(I) = \sum_{\bfx \in I}\phi(\bfx).$$
For a component $\clc_0$ in $\BS^*(t)$, denote by $I_{\clc_0}$ the corresponding component in $\mirror(t)$.  Then note that
\begin{equation}\label{eq:eq1848}
 |\clc_0| = \vol_{\phi_t}(I_{\clc_0}) .
%	:= \sum_{\clc \in \Imm(t)}\phi_t(\tau_{\clc}, w_{\clc}).
\end{equation}

% The graph $\mirror(t)$, equipped with the above weight function, is equivalent to $\BS^*(t)$ in the sense that all properties regarding the component sizes/volumes are identical. Thought $\mirror(t)$ does not directly follow the IRG definition (since the vertex set is not a Poisson point process on $\XX_t$), it can be approximated by a version of IRG model, due to the fact $(\va^*, b^*, \vc^*) \approx (\va, b, \vc)$, where the latter are deterministic functions.

We will now describe inhomogeneous random graph models that approximate $\mirror(t)$ (and hence $\BS^*(t)$).
Given a set of nonnegative continuous bounded functions $\mvalpha = \set{\alpha_i}_{1\leq K}$, $\beta$ and $\mvgamma = \set{\gamma_i}_{1\leq i\leq K}$ 
on $[0,\infty)$ we construct, for each $t >0$, type measures
$\mu_t(\mvalpha, \beta, \mvgamma)$ and kernels $\kappa_t(\mvalpha, \beta, \mvgamma)$ on $\XX$ as follows.
For $i = 1, \cdots K$ and $s \in [0,\infty)$, denote by $\tilde \nu_{s,i}$ the probability law on $\cD([s,\infty): \NNN_0)$ of the Markov process
$\{\tilde w(r)\}_{r \in [s,\infty)}$ with infinitesimal generator 
\begin{equation}
	 (\calA(u) f) (k) =  \sum_{j=1}^K k \gamma_j(u)  (f(k+j)-f(k)),\; f \in \BM(\NNN_0)	\label{eqn:attachment-operator}
\end{equation}
and initial condition $\tilde w(s) = K+i$. In words, this is a pure jump Markov process which starts at time $s$ at state $K+i$ and then at any time instant $u> s$, has jumps of size $j$ at rate $\gamma_j(u)$. 
Denote by $ \nu_{s,i}$ the probability law on $\cD([0,\infty): \NNN_0)$   of the stochastic process
$\{ w(r)\}_{r \in [0,\infty)}$, defined as
\begin{equation}
\label{eqn:nu-si-defn}
	w(r) = \tilde{w}(r) \quad\mbox{ for $r\geq s$ },\qquad w(r) = 0 \mbox{ otherwise. }
\end{equation}
% $W(r) = \tilde W(r)1_{[s,\infty)}(r)$, $r \in [0,\infty)$, where $\tilde W$ is as above.
Now define the finite measure $\mu_t(\mvalpha, \beta, \mvgamma) \equiv \mu_t$ as 
\begin{equation}
	\label{eqn:def-mu-gen}
	\int_\XX f(\bfx) d\mu_t(\bfx) = \sum_{i=1}^K \int_0^t \alpha_i(u) \left( \int_\WW f(u, w)\nu_{u,i}(dw) \right) du, \; f \in \BM(\XX).
\end{equation}
Next, define the kernel $\kappa_t(\mvalpha, \beta, \mvgamma) \equiv \kappa_t$ on $\XX \times \XX$ as
\begin{equation}
	\label{eqn:def-kappa-gen}
	\kappa_t(\bfx, \bfy)=\kappa_{t}((s,w),(r,\tilde w))=\int_0^t w(u) \tilde w(u) \beta(u) du, \; \bfx=(s,w), \bfy=(r,\tilde w) \in \XX .
\end{equation}
With the above choice of $\mu_t$, $\kappa_t$ and with weight function $\phi_t$ as in \eqref{eqn:def-phi} we now construct the random graph
$\RG^{\sss (n)}(\XX, \mu_t, \kappa_t, \phi_t)$  which we denote by $\RG^{\sss(n)}(\mvalpha,\beta,\mvgamma)(t)$.  We will refer to the set of
functions $(\mvalpha,\beta,\mvgamma)$ as above, as {\bf rate functions}.  These rate functions will typically arise as small perturbations of the  functions
$(\va , b, \vc)$, thus in view of Proposition \ref{prop:error-abc}(b) it will suffice to consider $(\mvalpha,\beta,\mvgamma)$ such that $\max\{||\mvalpha||_{\infty},||\beta||_{\infty},
||\mvgamma||_{\infty}\} < 1.$  Throughout this work we will assume that all rate functions  (and their perturbations) satisfy this bound.\\

The following key result says that for large $n$, $\mirror(t)$ is suitably close to $\RG(\va , b, \vc)(t)$, where $(\va , b, \vc)$ are the rate functions
introduced below \eqref{eqn:bi-bist-diff}. In order to state the result precisely, we extend the notion of a ``subgraph'' to the  setting with  type space $\XX$ and  weight function $\phi$. For $i=1,2$, consider graphs $\bfG_i$, with finite vertex set $\VV_i \subset \XX$ and edge set $\EE_i$. We say $\bfG_1$ is a subgraph of $\bfG_2$, and write $\bfG_1 \subset \bfG_2$, if there exists a  one to one mapping $\Psi: \VV_1 \to \VV_2$ such that \\
(i) $\phi(\bfx) \le \phi (\Psi(\bfx)), \mbox{ for all }\bfx \in \VV_1$.\\
(ii) $\{\bfx_1, \bfx_2\} \in \EE_1$ implies $\{\Psi(\bfx_1), \Psi(\bfx_2)\} \in \EE_2$.\\
\begin{Lemma}
	\label{lemma:bsr-to-irg}
	Fix $\delta \in (0, 1/2)$ and let $\eps_n = n^{-\delta}$, $n \ge 1$.  Define, for $t >0$, the set of functions
$\va^-(t):= \set{(a_i(t)-\eps_n)\vee 0 }_{1\leq i\leq K}$, $\va^+(t):= \set{ a_i(t)+\eps_n}_{1\leq i\leq K}$ and similarly $\vc^-(t), \vc^+(t)$ and $b^-(t), b^+(t)$. 
Define the inhomogeneous random graphs (IRG) with the above rate functions by
$$\RG^-(t) := \RG(\va^-,b^-,\vc^-)(t) , \;\; \RG^+(t):=\RG(\va^+,b^+,\vc^+)(t). $$

Then for every $T > 0$ there exist $C_3, C_4 \in (0, \infty)$, such that for all $t \in [0,T]$, there is a coupling of $\RG^-(t) $, 	$ \RG^+(t)$ and 
$\mirror(t)$  such that,  
%\footnote{Then for every $T > 0$ there exist $C_3, C_4 \in (0, \infty)$ and a coupling of $\RG^-(t) $, 	$ \RG^+(t)$ and 
%$\mirror(t)$  such that, for all $t \in [0,T]$,}
	$$ \prob \set{ \RG^-(t) \subset \mirror (t) \subset \RG^+(t) } > 1 - C_3 \exp \set{ -C_4 n^{1-2\delta} }.$$
\end{Lemma}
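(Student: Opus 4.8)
The plan is to build the coupling by running, simultaneously, the exact BSR dynamics governing $\BS^*(t)$ (equivalently $\mirror(t)$) and two IRG constructions $\RG^\pm(t)$, and to show that on a high-probability event the rate functions $(\va^*, b^*, \vc^*)$ governing the true process are sandwiched between $(\va^-, b^-, \vc^-)$ and $(\va^+, b^+, \vc^+)$ for all $s\le t$, whence a standard monotone coupling of the graphs follows. First I would condense the estimates \eqref{eqn:ai-aist-diff}, \eqref{eqn:ci-cist-diff}, \eqref{eqn:bi-bist-diff} (together with \eqref{eqn:error-a} and Lemma~\ref{lemma:approx-xi}) into a single event $E_n$ of probability at least $1 - C_3\exp(-C_4 n^{1-2\delta})$ on which, for all $s\in[0,T]$ and all $1\le i\le K$,
\[
a_i^*(s)\in[a_i^-(s), a_i^+(s)], \quad c_i^*(s)\in[c_i^-(s), c_i^+(s)], \quad b^*(s)\in[b^-(s), b^+(s)],
\]
using $\eps_n = n^{-\delta}$; here the $\vee 0$ truncation in $\va^-$ is harmless since $a_i^*\ge 0$ always. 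On $E_n$ the instantaneous immigration, attachment and edge-formation rates of $\BS^*$ are dominated above by those of $\RG^+$ and below by those of $\RG^-$.

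Next I would make precise the sense in which $\mirror(t)$ is ``the same as'' an IRG with the true rates. Each immigrant $\clc\in\Imm(t)$ appears at rate $n\,a_i^*(\cdot)$ for its size class $i$, and thereafter its recorded path $w_\clc$ evolves by attachments: a size-$i$ component attaches to any fixed vertex of $\BS^*$ at rate $c_i^*(\cdot)$, so that, \emph{conditionally on the immigration times and sizes}, the path $w_\clc$ of a single immigrant has exactly the law $\nu_{\tau_\clc,i}$ of the generator \eqref{eqn:attachment-operator} with $\gamma_j=c_j^*$ — once one observes that the attachment rate to $w_\clc$ is $w_\clc(u)\,$ (number of vertices currently in that component) times the per-vertex rate, matching $\sum_j k\gamma_j(u)(f(k+j)-f(k))$. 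Similarly the edge set of $\mirror(t)$ is obtained by placing, between any two immigrant components present at time $s$ with current sizes $w(s),\tilde w(s)$, an edge at rate $w(s)\tilde w(s)b^*(s)/n$, so that the probability that a given pair is joined by time $t$ is $1-\exp(-n^{-1}\int_0^t w(u)\tilde w(u) b^*(u)\,du)$, which is coupled to a Bernoulli with the IRG success probability $1\wedge n^{-1}\kappa_t(\bfx,\bfy)$ in \eqref{eqn:def-kappa-gen} in the obvious monotone way (the $1\wedge$ and the Poisson-vs-Bernoulli discrepancy both only help one direction and are absorbed into the event). The one subtlety is that the vertex set of $\mirror(t)$ is \emph{not} literally a Poisson process on $\XX$: immigrants arrive at rate $n a_i^*(s)$ out of a finite population, and attachments consume vertices, so the true immigration intensity is $n a_i^*(s)$ only up to a depletion correction of relative order $1/n$. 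I would handle this exactly as the rate discrepancies above — absorb it into $\eps_n$ by noting $a_i^*(s)$ already differs from $F_i^a(\bar\bfx(s))$ by $O(1/n)=o(\eps_n)$, and the depletion is of the same order — or, more cleanly, dominate the true immigration point process above and below by Poisson processes with intensities $n a_i^\mp(s)$ on $E_n$ using a thinning/superposition coupling.

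With these identifications in hand, the coupling is assembled by the standard device: drive all three processes from a common family of Poisson clocks and uniform random variables, use the rate orderings on $E_n$ to thin the clocks so that every immigrant, attachment, and edge present in $\RG^-(t)$ is present in $\mirror(t)$ and every one present in $\mirror(t)$ is present in $\RG^+(t)$, and let $\Psi$ be the induced injection on vertex sets; condition (i) of the subgraph definition holds because more attachments only increase $w(t)=\phi_t$, and condition (ii) because the edge sets are nested by construction. The main obstacle, and where I would spend most of the care, is precisely the Poisson-point-process structure of the IRG vertex set versus the finite-population, depletion-affected immigration mechanism of $\mirror(t)$: one must check that replacing the true immigration intensities by the deterministic $n a_i^\pm(s)$ and simultaneously ignoring the (negative) depletion drift still yields a valid \emph{stochastic} domination in both directions, not merely a domination of expected intensities. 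This is where the gap $\eps_n=n^{-\delta}$ with $\delta<1/2$ is used: it is large enough to swallow all $O(1/n)$ corrections while the event $E_n$ still has probability $1-C_3\exp(-C_4 n^{1-2\delta})$, matching the exponent in the statement.
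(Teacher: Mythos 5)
Your proposal captures the essential strategy of the paper's proof: use the uniform high-probability estimates \eqref{eqn:ai-aist-diff}, \eqref{eqn:ci-cist-diff}, \eqref{eqn:bi-bist-diff} to sandwich the BSR rate functions $(a^*_j, b^*, c^*_j)$ between the deterministic perturbed rates $(a^\pm_j, b^\pm, c^\pm_j)$ on an event of probability at least $1-C_3\exp\{-C_4 n^{1-2\delta}\}$, and then assemble a monotone coupling by thinning. That is the paper's route, so in outline you are on the right track. However, one step in your argument is stated in a way that would not go through as written, and it is precisely the step the paper's explicit construction is designed to handle.

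The problematic claim is that ``conditionally on the immigration times and sizes, the path $w_\clc$ of a single immigrant has exactly the law $\nu_{\tau_\clc,i}$ with $\gamma_j = c^*_j$.'' The coefficients $c^*_j(\cdot)$, and likewise $a^*_j(\cdot)$ and $b^*(\cdot)$, are functionals of the random process $\bar\bfx(\cdot)$, which is driven by the very same immigration, attachment, and small-component merger events you are trying to describe. Conditionally on the immigration times, the paths $\{w_\clc\}$ are therefore neither independent across immigrants nor governed by a deterministic time-inhomogeneous generator; they interact through $\bar\bfx$. By contrast, the $\nu_{s,i}$ used to define $\mu_t^\pm$ are laws of genuinely Markov, mutually independent paths with deterministic rates. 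The paper bridges this gap by constructing $\bar\bfx$ \emph{jointly} with the path collection: it first samples the dominating Poisson point process $\clv^+$ on $\XX$ (whose points already carry candidate paths $w_i^+$ driven by the deterministic upper rates), then processes the jump times $\{t_k\}$ of $\bfw^+$ in order, running the small-component dynamics for $\bar\bfx$ between those jump times and, at each $t_k$, accepting the jump into $\Gamma$ with probability given by the ratio of the current $\bar\bfx$-dependent rate to the deterministic upper rate, via an auxiliary uniform $u_k$. The stopping time $\sigma$, the first time the sandwich \eqref{eqn:couple-failure} fails, is exactly what keeps these ratios in $[0,1]$; before $\sigma$ the constructed $\Gamma$ has the correct BSR law and $\Gamma(t)\subset\RG^+(t)$ by construction, and $\prob(\sigma<t)$ is controlled by \eqref{eqn:ai-aist-diff}--\eqref{eqn:bi-bist-diff}. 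Your ``depletion correction of relative order $1/n$'' is a secondary point (it is already absorbed in the $K/n$ term of \eqref{eqn:error-a}); the structural issue is the feedback through $\bar\bfx$, and the recursive joint construction is what makes ``thinning the clocks'' rigorous. The containment $\RG^-\subset\Gamma$ is then a second thinning of the same type with the roles reversed. Your proof sketch would be complete once you replace the conditional-independence heuristic with such an adaptive, $\bar\bfx$-tracking acceptance--rejection construction.
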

% Before describing the coupling, we first introduce some more notation.
% For $\bfx=(s,w) \in \XX$, define $\sig(x)=w \in \WW$ to be the signature of $\bfx$, and define $\entr(\bfx) := s$ to be the entrance time, where the signature $w(\cdot)$ has its first jump. Since $\bfx$ can be fully recovered from its signature, thus we define $\entr(w):=\entr(\bfx) = s$ as well. For a finite vertex set $\VV \subset \XX$, suppose the entrance times of vertices in $\VV$ are all different, then define the signature of $\VV$ as
% $$ \sig(\VV) = (w_1, w_2, ...) \in \WW^\infty,$$
% where $w_1,w_2,...$ are the reordering of signatures of vertices in $\VV$ such that $\entr(w_i) < \entr(w_j)$ when $i < j$, and take $w_i \equiv 0$ when $i > |\VV|$.
% Denote $\VV^*, \VV^-, \VV^+$ for the vertex set of $\mirror(t)$, $\RG^-(t)$ and $\RG^+(t)$ respectively. Now we are ready to give the proof of the above lemma.\\
\textbf{Proof:} The coupling between the three graphs is done in a manner such that $\Gamma(t)$ is obtained by a suitable thinning of vertices and edges in $\RG^+(t)$
and $\RG^-(t)$ is obtained by a thinning of $\Gamma(t)$.  We will only provide details of the first thinning step.  We first construct the vertex sets 
$\clv^+$ and $\clv^*$ in $\RG^+(t)$ and $\Gamma(t)$ respectively.  

Let $\clv^+$ be a Poisson point process on $\XX$ with intensity $n \mu_t^+$, where $\mu_t^+ := \mu_t(\va^+,b^+,\vc^+)$.
For a fixed realization of $\clv^+$, denote by
$(x_1^+, \cdots, x_N^+)$, the points in $\clv^+$, with
$x_i^+ = (s_i^+, w_i^+)$ and $0 < s_1^+ < s_2^+ \cdots s_N^+ < t$.  Write $\bfw^+ = (w_1^+, \cdots w_N^+)$.
We now construct vertices in the corresponding realization of $\Gamma(t)$ (denoted as $\{x_1, \cdots x_{N_0}\}$), along with the realizations of
$\bar x_i(s)$, $i \in \Omega_K$, $0 \le s \le t$, which then defines 
 $(a^*_j(s), b^*(s), c^*_j(s))$, $0 \le s \le t$, $j = 1, \cdots K$, as functions of $\bar \bfx(s) = (\bar x_i(s))_{i \in \Omega_K}$ as in Section \ref{sec:evol-bst-st}.
For that, we will construct functions $w_j: [0,t] \to \NNN_0$, $1 \le j \le N$ and $\bar x_i: [0,t] \to [0,1]$, $i \in \Omega_K$.
We will only describe the construction of $w_j, \bar x_i$ until the first time instant $s \in (0, t]$, when the property
\begin{equation}
	\label{eqn:couple-failure}
	 a^*_j(s) \le a^+_j(s), \; b^*(s) \le b^+(s), \;c^*_j(s) \le c^+_j(s) \mbox{ for all } 1 \le k \le K
\end{equation}
is violated. Denote $\sigma$ for the first time that \eqref{eqn:couple-failure} is violated with $\sigma$ taken to be $t$ if the property holds for all $s \in [0,t]$.  Subsequent to that time instant the construction can be done in any  fashion that yields the correct probability law for $\Gamma(t)$.
For simplicity, we assume henceforth that $\sigma = t$.
After obtaining the functions $w_j, \bar x_i$, we set 
$x_i^* = (\tau_i^*, w_i^*)$, where $\tau_i^*$ is the first jump instant of $w_i$ (taken to be $+\infty$ if there  are no jumps) and
$w_i^* \in \DD([0, \infty): \NNN_0)$ is defined as
$w_i^*(s) = w_i(s)1_{[0, t]}(s) + w_i(t)1_{(t, \infty)}(s)$. The vertex set $\clv^*$ is then defined as
$$
\clv^* = \{x_1, \cdots x_{N_0}\} = \{x_i^*: \tau_i^* < t, i = 1, \cdots N\}.$$

We now give the construction of $\bfw(s) = (w_1(s), \cdots w_N(s))$ and $\bar \bfx(s)$ for $s \le t$.  
Denote by $\{t_i\}_{i=1}^M$, $0=t_0 < t_1 < t_2 < ...t_M < t$,
the collection of all time instants of jumps of $\{w_i^+\}_{i=1}^N$ before time $t$.  Denote by  $i_k$ the  coordinate of $\bfw^+$ that has a jump at  time $t_k$,  and denote the
corresponding jump size by $j_k$.
We set
$\bfw(0) = 0$, $\bar x_i(0) = 0$ for $i \neq 1$ and $\bar x_1(0) = 1$.
The construction proceeds recursively over the time intervals $(t_{k-1}, t_{k}]$, $k =  1, \cdots M+1$, where $t_{M+1} =t$.
Suppose that $(\bfw(s), \bar \bfx(s))$ have been defined for $s \in [0, t_{k-1}]$, for some $k\ge 1$.
We now define these functions over the interval $(t_{k-1}, t_k]$.\\

\noindent \textbf{Step 1:} $s \in (t_{k-1},t_k)$.  Set $\bfw(s) = \bfw(t_{k-1})$.  The values of $\bar \bfx(s)$ over the interval will be given
as a realization of a jump process, for which jumps at time instant $s$ occur at rates
$n \cdot R_{i_1, i_2}(s)$, $i_1, i_2 \in \{1, \cdots K\}$, $i_1+i_2 \le K$, where the function $R_{i_1,i_2}(s)$, given as a function of $\bar \bfx(s)$ is defined as in Section
\ref{sec:evol-bst-st}.  A jump at time instant $s$, corresponding to the pair $(i_1, i_2)$ as above, changes the values of $\bar \bfx$ as:
\begin{align*}
	&\bar x_{i_1}(s)= \bar x_{i_1}(s-)-\frac{i_1}{n},\; \bar x_{i_2}(s)= \bar x_{i_2}(s-)-\frac{i_2}{n}, \; \bar x_{i_1+i_2}(s)= \bar x_{i_1+i_2}(s-)+\frac{i_1+i_2}{n}, \;\; \mbox{if}\; i_1 \neq i_2,\\
	&\bar x_{i_1}(s)= \bar x_{i_1}(s-)-\frac{2i_1}{n},\; \bar x_{i_1+i_2}(s)= \bar x_{2i_1}(s-)+\frac{2i_1}{n}, \;\; \mbox{if}\; i_1 = i_2.
\end{align*}
Remaining $\bar x_i$ stay unchanged.  The values of $a^*_i(s)$, $b^*(s)$, $c^*_i(s)$ are determined accordingly.\\

\noindent \textbf{Step 2:} $s= t_k$.  Recall that $w_{i_k}^+(t_k)-w_{i_k}^+(t_k-)=j_k.$  We define
$w_i(t_k) = w_i(t_k-)$ for all $i \neq i_k$.  The values of $w_{i_k}(t_k)$
and $\bar \bfx(t_k)$ are determined as follows.\\

\noindent {\bf Case 1:}  $w_{i_k}^+(t_k-)=0$.
In this case 
$K+1 \le j_k \le 2K$ and $t_k$ is the first jump of $w_{i_k}^+$.
Define for $ 1\le l\le K$, $Q_k(l) := \sum_{i=1}^{l}R_{i,j_k-i}(t_k-)$, where by definition
$R_{i,i'} = 0$ if $i' > K$.  Note that $Q_k(K)= a^*_{j_k}(t_k-)$.  We set $Q_k(0)=0$.
The values of $w_{i_k}(t_k)$
and $\bar \bfx(t_k)$ are now determined according to the realization of an independent Uniform $[0,1]$ random variable $u_k$ as follows.
\begin{itemize}
	\item If $u_k > Q_k(K)/a_{j_k}(t_k-)$, define $ (w_{i_k}(t_k), \bar \bfx(t_k)) = (w_{i_k}(t_k-), \bar \bfx(t_k-))$.
	\item Otherwise, suppose $1 \le l_k \le K$ is such that $ Q_k(l_k-1) < u_k \le Q_k(l_k)$. Then define 
	$w_{i_k}(t_k)=j_k$,  $\bar x_{\ompar}(t_k) = \bar x_{\ompar}(t_k-) + \frac{j_k}{n}$ and
	\begin{align*} 
		\bar x_{l_k}(t_k) &= \bar x_{l_k}(t_k-) - \frac{l_k}{n},\;\;\bar x_{j_k-l_k}(t_k) = \bar x_{j_k-l_k}(t_k-) - \frac{j_k-l_k}{n}, 
		%\;\;\bar x_{\ompar}(t_k) = \bar x_{\ompar}(t_k-) + \frac{j_k}{n},	
		\;\; \mbox{ if } l_k \neq j_k-l_k,\\
		\bar x_{l_k}(t_k) &= \bar x_{l_k}(t_k-) - \frac{2l_k}{n},\;\;
		% \bar x_{\ompar}(t_k) = \bar x_{\ompar}(t_k-) + \frac{j_k}{n},
		\;\; \mbox{ if } l_k = j_k-l_k.
	\end{align*}
	The value of all other $x_i$ processes at $t_k$ stay the same as their values at $t_k-$. 
\end{itemize}

\noindent {\bf Case 2:}   $w_{i_k}^+(t_k-) \ne 0$.
In this case
$1\le j_k \le K$.
Once again, the values of $w_{i_k}(t_k)$
and $\bar \bfx(t_k)$ are  determined according to the realization of an independent Uniform $[0,1]$ random variable $u_k$ as follows.
\begin{itemize}
	\item If $u_k > \frac{w_{i_k}(t_k-)c_{j_k}^*(t_k-)}{w_{i_k}^+(t_k-)c_{j_k}^+(t_k-)}$, define $ (w_{i_k}(t_k), \bar \bfx(t_k)) = (w_{i_k}(t_k-), \bar \bfx(t_k-))$.
	\item Otherwise, 
	$$w_{i_k}(t_k)=w_{i_k}(t_k-) + j_k, \;\;\bar x_{j_k}(t_k) = \bar x_{j_k}(t_k-) - \frac{j_k}{n}, \;\;\bar x_{\ompar}(t_k) = \bar x_{\ompar}(t_k-) + \frac{j_k}{n},$$
	and the value of all other $x_i$ processes stay the same as their value at $t_k-$.
\end{itemize}
This completes the construction of $(\bfw(s), \bar \bfx(s))$ for $s \in (t_{k-1}, t_k]$ and thus by this recursive procedure and our earlier discussion
we obtain the vertex set
$$\clv^* = \{x_1, \cdots x_{N_0}\} = \{x_i^*: \tau_i^* < t, i = 1, \cdots N\}, $$ 
which will be used to construct $\Gamma(t)$.

Having constructed vertex sets $\clv^+$ and $\clv^*$, we now construct edges.  For this we take realizations of independent Uniform $[0,1]$ random variables
$\{u_{i,j}\}_{1 \le  i < j < \infty}$ and construct edge between vertices $x_i^+$ and $x_j^+$ in $\clv^+$ if
$$ u_{i,j} \le \frac{1}{n} \int_0^t b^+(s)w_i^+(s)w_j^+(s)ds. $$
This completes the construction of $\RG^+(t)$.  Finally construct an edge between $x_i^*$ and $x_j^*$ if both vertices are in $\clv^*$ and
$$ u_{i,j} \le 1 - \exp \set{-\frac{1}{n}\int_0^t b^*(s)w_i(s)w_j(s)ds}.$$
This completes the construction of $\Gamma(t)$.  
By construction $ \mirror(t) \subset \RG(\va^+,b^+,\vc^+)(t)$ on the set $\sigma =t$.
Also, from \eqref{eqn:ai-aist-diff}, \eqref{eqn:bi-bist-diff} and \eqref{eqn:ci-cist-diff} it follows that
$\prob(\sigma < t) \le C_3 \exp \set{ -C_4 n^{1-2\delta} }$ for suitable constant $C_3, C_4$.  The result follows.  \qed

The following is an immediate corollary of Lemma \ref{lemma:bsr-to-irg}.  
\begin{Corollary}
	\label{cor:cor2127}
	Fix $T > 0$.  Then with  $C_3, C_4 \in (0, \infty)$ and , for $t \in [0,T]$,   a coupling of $\RG^-(t) $,	$ \RG^+(t)$ and 
	$\mirror(t)$  as in Lemma \ref{lemma:bsr-to-irg}:
	\begin{align}
	\label{eqn:bsr-irg-bound}
		&\prob\set{\vol_{\phi_t}(\cI_1^{\sss \RG^-}(t))\leq \vol_{\phi_t}(\cI_1^{\sss \mirror}(t)) \leq \vol_{\phi_t}(\cI_1^{\sss \RG^+}(t))  }\nonumber\\
		 &\;\;\;\;\geq 1-C_3 \exp(-C_4 n^{1-2\delta}),
	\end{align}
	where $\cI_1^{\sss \mirror}(t)$ denotes the  component in $\Gamma(t)$ with the largest volume with respect to the weight function $\phi_t$, and $\cI_1^{\sss \RG^-}(t)$, $\cI_1^{\sss \RG^+}(t)$ are defined similarly.
\end{Corollary}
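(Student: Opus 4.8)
Since the corollary is asserted to be immediate from Lemma~\ref{lemma:bsr-to-irg}, the plan is to isolate one deterministic monotonicity fact about $\phi$-volumes under the ``subgraph'' relation on graphs with type space $\XX$, and then transport the probability estimate of Lemma~\ref{lemma:bsr-to-irg} verbatim. The fact I would record is: if $\bfG_1,\bfG_2$ are graphs with finite vertex sets in $\XX$, carrying a \emph{common} nonnegative weight function $\phi$, and $\bfG_1\subset\bfG_2$ in the extended sense above (so there is an injection $\Psi:\VV_1\to\VV_2$ obeying conditions (i)--(ii)), then
\[
\vol_\phi\bigl(\cI_1^{\bfG_1}\bigr)\le\vol_\phi\bigl(\cI_1^{\bfG_2}\bigr),
\]
where $\cI_1^{\bfG_i}$ denotes the component of $\bfG_i$ of largest $\phi$-volume.

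To verify this I would pick a component $\clc$ of $\bfG_1$ of maximal $\phi$-volume. Condition (ii) forces $\Psi(\clc)$ to be connected in $\bfG_2$, hence it lies inside a single component $\clc'$ of $\bfG_2$; then, using the injectivity of $\Psi$, condition (i), and $\phi\ge0$,
\[
\vol_\phi(\clc)=\sum_{\bfx\in\clc}\phi(\bfx)\le\sum_{\bfx\in\clc}\phi(\Psi(\bfx))=\sum_{\bfy\in\Psi(\clc)}\phi(\bfy)\le\sum_{\bfy\in\clc'}\phi(\bfy)=\vol_\phi(\clc')\le\vol_\phi\bigl(\cI_1^{\bfG_2}\bigr),
\]
which is what was claimed. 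With this in hand the corollary follows: the weight function $\phi_t$ of \eqref{eqn:def-phi} depends on $t$ only, not on the rate functions, so $\RG^-(t)$, $\mirror(t)$ and $\RG^+(t)$ all carry the same $\phi_t$; on the event $\{\RG^-(t)\subset\mirror(t)\subset\RG^+(t)\}$ of Lemma~\ref{lemma:bsr-to-irg}, applying the monotonicity fact to $(\RG^-(t),\mirror(t))$ and then to $(\mirror(t),\RG^+(t))$ gives
\[
\vol_{\phi_t}\bigl(\cI_1^{\sss\RG^-}(t)\bigr)\le\vol_{\phi_t}\bigl(\cI_1^{\sss\mirror}(t)\bigr)\le\vol_{\phi_t}\bigl(\cI_1^{\sss\RG^+}(t)\bigr),
\]
and since that event has probability at least $1-C_3\exp(-C_4 n^{1-2\delta})$ by Lemma~\ref{lemma:bsr-to-irg}, the bound \eqref{eqn:bsr-irg-bound} follows with the same constants.

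There is essentially nothing hard here; the only point that needs a little care --- and the closest thing to an obstacle --- is that ``subgraph'' is defined through the vertex map $\Psi$ with $\phi$ possibly \emph{strictly} increasing under $\Psi$, rather than through literal set inclusion, so the monotonicity step must be run through conditions (i)--(ii) and the nonnegativity of $\phi_t$ exactly as above. No further estimates beyond Lemma~\ref{lemma:bsr-to-irg} are needed.
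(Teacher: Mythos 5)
Your proof is correct and matches what the paper intends when it calls this an immediate corollary of Lemma~\ref{lemma:bsr-to-irg}: the deterministic monotonicity of the largest $\phi_t$-volume under the extended subgraph relation (via $\Psi$, using injectivity, condition (i), condition (ii), and $\phi_t\ge 0$), applied on the high-probability event $\{\RG^-(t)\subset\mirror(t)\subset\RG^+(t)\}$. Nothing to add.
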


\section{Proof of the main results}
\label{sec:bsr-largest-component}

In this section, we will complete the proof of Theorems \ref{thm:subcrit-reg} and \ref{theo:new-crit-time}. 
Proof of Theorem \ref{theo:new-crit-time} is given in Section \ref{sec:proof-phase-transition} while proof of Theorem \ref{thm:subcrit-reg}
is given in Section \ref{sec:ia-largest-com}.
Recall that Lemma \ref{lemma:bsr-to-irg} says that $\BS^*$ can be approximated by  $\RG(\va,b,\vc)$. Sections
\ref{sec:proof-norm-abc} and \ref{sec:proof-norm-t} 
analyze properties of integral operators associated with 
$\RG(\mvalpha, \beta, \mvgamma)$ for a general family of rate functions $(\mvalpha, \beta, \mvgamma)$. 
We begin in Section \ref{sec:preliminary-norm}
by presenting some results about an  IRG model $\RG^{\sss(n)}(\cX,\mu,\kappa,\phi)$ on a general type space $\XX$.

\subsection{Preliminary lemmas}
\label{sec:preliminary-norm}
In this section, we collect some results about the general IRG model $\RG^{\sss(n)}(\cX,\mu,\kappa,\phi)$.
Let $\KK$ be the integral operator associated with $(\kappa , \mu)$, as defined in Section \ref{sec:intro}.
Recall that the operator norm of $\KK$, denoted as 	$\|\KK\|$, is defined as
\begin{equation}
\label{eqn:operator-norm-def}
	\|\KK\| = \sup_{ f \in L^2(\cX,\mu), f \neq 0} \frac{ \|\KK f\|_2}{\|f\|_2},
\end{equation}
where for $f \in L^2(\cX,\mu)$, $\|f\|_2 = \left(\int_{\XX}|f(\bfx)|^2 \mu(d\bfx)\right)^{1/2}$.

%The next lemma gives a bound on the volume of any component in $\RG(\mu,\kappa,\phi)$. Part (i) was proved in \cite{bhamidi-budhiraja-wang2011} (Lemma 6.13) which we state for completeness. 
\begin{Lemma}
\label{lemma:irg-unbounded} 
Fix $(\cX,\mu,\kappa,\phi)$.
Denote the vertex set of  $\RG^{\sss(n)}(\cX,\mu,\kappa,\phi) \equiv \RG^{\sss(n)}$ by $\PP_n$ which is a rate $n\mu$ Poisson point process on $\cX$. 
Let $\KK$ be the integral operator associated with $(\kappa , \mu)$.
Suppose that $\|\KK\| < 1$ and let $\Delta = 1-\|\KK\|$.
Denote by $\cI^{\sss \RG}_1$ the component in $\RG^{\sss(n)}$ with the largest volume (with respect to the weight function $\phi$).
% For fixed $x_0 \in \cX$, let $\cC_{RG}^{\sss(n)}(x_0)$ denote the volume 
% (with respect to the weight function $\phi$) of the component of $x_0$ in $\RG^{\sss(n)}$, 
% with the convention $\cC_{RG}^{\sss(n)}(x_0) = 0$ if $x_0\notin \PP_n$. 
Then the following hold.
\begin{enumeratei}
	\item If $\|\phi\|_\infty < \infty$ and
	$\|\kappa\|_\infty < \infty$, then for all $m \in \NNN$ and $D \in (0, \infty)$
	\begin{equation}
	\label{ins1531}
	\prob \{ \vol_{\phi}(\cI^{\sss \RG}_1) > m \} \le 2 nD\exp \{-C\Delta^2 m\} + \prob(|\PP_n| \ge nD),
	\end{equation}
	where $C={(8\|\phi\|_{\infty}(1 + 3\|\kappa\|_{\infty}\mu(\XX)))^{-1}}$.
	\item Let for $n \ge 1$,  $\Lambda_n \in \clb(\XX)$ be such that
	$$ g(n) := 8  \sup_{\bfx \in \Lambda_n}|\phi(\bfx)|\left(1+ 3 \mu(\XX) \sup_{(\bfx,\bfy) \in \Lambda_n \times \Lambda_n} |\kappa(\bfx,\bfy)| \right) < \infty .$$
	Then for all $m \in \NNN$,
	$$ \prob\{  \vol_{\phi}(\cI^{\sss \RG}_1) > m  \} \le n\mu(\Lambda_n^c)  + 2 nD\exp( - \Delta^2 m/ g(n) ) + \prob(|\PP_n| \ge nD). $$
\end{enumeratei}
\end{Lemma}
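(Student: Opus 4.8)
In both parts the plan is to reduce control of $\vol_{\phi}(\cI^{\sss \RG}_1)$ to an estimate on the $\phi$-volume of the component $\CC_v$ of a single vertex $v\in\PP_n$, and then prove that estimate by dominating the component exploration with a subcritical multitype branching process whose mean offspring operator is exactly $\KK$. For the reduction, I would note that $\{\vol_{\phi}(\cI^{\sss \RG}_1)>m\}$ is contained in $\{|\PP_n|\ge nD\}$ together with the event that $|\PP_n|<nD$ and some $v\in\PP_n$ has $\vol_{\phi}(\CC_v)>m$; a union bound over the at most $nD$ vertices present in the latter case reduces (i) to proving $\prob\{\vol_{\phi}(\CC_v)>m\}\le 2\exp\{-C\Delta^2 m\}$ uniformly in $v$. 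The point that makes this uniform, and that brings in the operator norm rather than the far cruder quantity $\|\kappa\|_\infty\mu(\XX)$, is that since $\PP_n$ is a Poisson process with intensity $n\mu$, Poisson thinning shows that in a breadth-first exploration of $\CC_v$ the not-yet-discovered neighbours of an explored vertex of type $\bfx$ form a Poisson process on $\XX$ with intensity at most $\kappa(\bfx,\cdot)\,\mu(\cdot)$ (using $n(1\wedge \kappa/n)\le \kappa$). Hence $\vol_{\phi}(\CC_v)$ is stochastically dominated, jointly with the $\phi$-weights of all discovered vertices, by $V:=\sum_{w\in\mathcal B}\phi(\mathrm{type}(w))$, where $\mathcal B$ is the branching process rooted at the type of $v$ in which a particle of type $\bfx$ produces offspring according to a Poisson process of intensity $\kappa(\bfx,\cdot)\mu(\cdot)$; its mean operator is $\KK$, which is subcritical since $\|\KK\|=1-\Delta<1$.

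The heart of the argument is an exponential moment bound for $V$. Fixing $\theta>0$ to be chosen, set $h(\bfx)=\E_{\bfx}[e^{\theta V}]$; the branching property gives the fixed-point identity $h(\bfx)=\exp\{\theta\phi(\bfx)+(\KK(h-1))(\bfx)\}$, and $h$ is the increasing limit of the iterates $h_0=e^{\theta\phi}$, $h_{k+1}=\exp\{\theta\phi+\KK(h_k-1)\}$, so it suffices to bound the $h_k$ uniformly. I would prove by induction the pair of bounds $\|h_k-1\|_\infty\le\Delta/2$ and $\|h_k-1\|_{L^2(\mu)}\le\eta$ with $\eta:=3\theta\|\phi\|_\infty\mu(\XX)^{1/2}/\Delta$. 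The $L^2$ step uses $e^a-1\le(1+\Delta/2)a$ valid for $0\le a\le\ln(1+\Delta/2)$, together with $\|\KK(h_k-1)\|_2\le(1-\Delta)\eta$ and $\|\phi\|_2\le\|\phi\|_\infty\mu(\XX)^{1/2}$; the $L^\infty$ step uses the Cauchy--Schwarz estimate $\|\KK g\|_\infty\le\|\kappa\|_\infty\mu(\XX)^{1/2}\|g\|_2$ to control the exponent. Both inductive steps close provided $\theta\|\phi\|_\infty(1+3\|\kappa\|_\infty\mu(\XX)/\Delta)\le\ln(1+\Delta/2)$, and since $\ln(1+\Delta/2)\ge\Delta/4$ and $\Delta\le1$ this holds for $\theta=C\Delta^2$ with $C=(8\|\phi\|_\infty(1+3\|\kappa\|_\infty\mu(\XX)))^{-1}$. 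Then $h\le1+\Delta/2\le2$, so Markov's inequality gives $\prob_{\bfx}(V>m)\le 2e^{-C\Delta^2 m}$ for every type $\bfx$, and the reduction above completes (i).

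For (ii) I would localise to $\Lambda_n$. Put $E_n=\{\PP_n\subset\Lambda_n\}$, so that $\prob(E_n^c)\le\E[\PP_n(\Lambda_n^c)]=n\mu(\Lambda_n^c)$, and conditionally on $E_n$ the graph $\RG^{\sss(n)}$ has the law of the IRG built from the restricted structure $(\Lambda_n,\mu|_{\Lambda_n},\kappa|_{\Lambda_n\times\Lambda_n},\phi|_{\Lambda_n})$, with $|\PP_n|$ stochastically smaller than unconditionally. Its integral operator is the compression $P_{\Lambda_n}\KK P_{\Lambda_n}$, whose norm is at most $\|\KK\|$, so the associated gap is at least $\Delta$. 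Applying part (i) to this restricted structure, with $\sup_{\Lambda_n}|\phi|$, $\sup_{\Lambda_n\times\Lambda_n}|\kappa|$ and $\mu(\Lambda_n)\le\mu(\XX)$ playing the roles of $\|\phi\|_\infty,\|\kappa\|_\infty,\mu(\XX)$, and bounding the gap below by $\Delta$, yields the bound $2nD\exp(-\Delta^2 m/g(n))+\prob(|\PP_n|\ge nD)$ (the last term bounded conditionally by its unconditional value). Adding $\prob(E_n^c)\le n\mu(\Lambda_n^c)$ gives (ii).

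The step I expect to be the main obstacle is the exponential moment estimate: one must extract an exponential rate from $\|\KK\|<1$ although $\|\kappa\|_\infty\mu(\XX)$, the natural quantity governing a crude generation-by-generation comparison, need not be below $1$. This forces the iteration for $h$ to be run while simultaneously controlling the iterates in $L^2(\mu)$, where $\KK$ genuinely contracts, and in $L^\infty$, where only the weaker Cauchy--Schwarz bound is available; the interplay of these two controls with the fact that $e^a-1$ is only linear in $a$ near $0$ is precisely what produces the quadratic exponent $\Delta^2$ and the stated constant $C$. Making the stochastic domination by $\mathcal B$ fully rigorous -- carrying the $\phi$-weights along with it, and checking that deleting already-explored vertices only decreases the offspring intensity -- is the remaining technical point, but is by now routine.
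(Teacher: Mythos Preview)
Your proof of part (ii) is essentially identical to the paper's: restrict to $\Lambda_n$, observe that the compressed operator has norm at most $\|\KK\|$ so the gap can only increase, apply part (i) to the restricted structure, and pay $\prob(\PP_n\cap\Lambda_n^c\neq\emptyset)\le n\mu(\Lambda_n^c)$ for the restriction. Your handling of the conditioning on $E_n=\{\PP_n\subset\Lambda_n\}$ via the independence of $\PP_n\cap\Lambda_n$ and $\PP_n\cap\Lambda_n^c$ is in fact slightly cleaner than the paper's, which writes an equality where an inequality is what is actually justified.

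For part (i) the paper does not give a proof at all; it simply cites Lemmas~6.12 and~6.13 of \cite{bhamidi-budhiraja-wang2011}. Your branching-process domination together with the $L^2/L^\infty$ bootstrap for the exponential moment $h(\bfx)=\E_\bfx[e^{\theta V}]$ is exactly the kind of argument one expects that reference to contain, and the inductive scheme you outline closes with the stated constant $C$. One small point to tighten: the sentence ``a union bound over the at most $nD$ vertices present'' is a little loose, since the branching-process domination you invoke is a Palm-type statement (condition on the root location, the rest of $\PP_n$ remains Poisson), not a statement conditional on the full realisation of $\PP_n$. The cleanest fix is to replace the union bound by Mecke's formula, which yields $\prob\{\vol_\phi(\cI_1)>m\}\le\E\bigl[\sum_{v\in\PP_n}\1\{\vol_\phi(\CC_v)>m\}\bigr]\le 2n\mu(\XX)e^{-C\Delta^2 m}$; this is actually stronger than \eqref{ins1531} (take $D=\mu(\XX)$) and avoids the intersection with $\{|\PP_n|<nD\}$ altogether.
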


{\bf Proof:} Part (i) has been proved in \cite{bhamidi-budhiraja-wang2011} (see Lemmas 6.12 and 6.13 therein).  We now prove (ii). 
Consider the truncated version of $\RG^{\sss(n)}$ constructed using the basic structure $\{ \XX, \bar\mu, \bar \kappa , \bar \phi \}$ where  $\bar \mu: = \mu|_{\Lambda_n}$ (i.e. the restriction of $\mu$ to $\Lambda_n$),   $\bar \kappa(\bfx,\bfy)=\kappa(\bfx,\bfy) \ind_{\Lambda_n}(\bfx)\ind_{\Lambda_n}(\bfy)$ and $\bar \phi(\bfx)=\phi(\bfx)\ind_{\Lambda_n}(\bfx)$.  
Note that $||\bar \kappa||_{\infty} < \infty$ and $||\bar \phi||_{\infty} < \infty$. Denote by $\bar \KK$ the integral operator associated with $(\bar \kappa, \bar \mu)$.  Clearly $\|\bar \KK\| \leq \|\KK\|$ and thus $\bar \Delta=1-\|\bar \KK\| \geq \Delta$. Consider the natural coupling between the truncated and original model by using the vertex set $\bar \PP_n: = \PP_n \cap \Lambda_n$. Write $\bar \cI^{\sss \RG}_1$ for the component with the largest
volume  in the truncated model. Then we have
\begin{align*}
	\prob\{ \vol_{\phi}(\cI^{\sss \RG}_1) > m   \}
	\le& \prob\{ \PP_n \cap \Lambda_n^c \neq \emptyset \} + \prob\{ \PP_n \subset \Lambda_n, \vol_{\phi}(\cI^{\sss \RG}_1) > m   \}\\
	=& \prob\{ \PP_n \cap \Lambda_n^c \neq \emptyset \} + \prob\{  \vol_{\phi}(\bar \cI^{\sss \RG}_1) > m    \}\\
	\le&  (1 -\exp\{ -n \mu(\Lambda_n^c) \})  + 2 nD\exp\{ - \Delta^2 m/ g(n) \} + \prob(|\PP_n| \ge nD), 
\end{align*}
where the last inequality follows from part (i) and the fact that $\Delta \leq \bar \Delta$. \qed

For the proof of the following elementary lemma we refer the reader to Lemma 6.5 in \cite{bhamidi-budhiraja-wang2011}.
\begin{Lemma}
	\label{lemma:basic-operator-norm}
	Let $\kappa, \kappa'$ be kernels on a common finite measure space $(\XX, \mu)$, with the associated integral operators $\KK, \KK'$ respectively. Then
\begin{enumeratea}
	\item $\|\KK\| \le \|\kappa\|_{2,\mu}:=\left( \int_{\XX \times \XX}\kappa^2(\bfx,\bfy) \mu(d\bfx)\mu(d\bfy) \right)^{1/2}$.
	\item If $ \kappa \le \kappa'$, then $||\KK|| \le ||\KK'||$.
	%\item $\rho(\kappa + \kappa') \le \rho(\kappa)+\rho(\kappa')$.
	\item $||\KK-\KK'|| \le ||\bar \KK||,$ where $\bar \KK$ is the integral operator associated with $(|\kappa-\kappa'|,\mu)$.
	%\item  $\rho(t \kappa) = t\rho(\kappa)$ for $t \ge 0$.
\end{enumeratea}

\end{Lemma}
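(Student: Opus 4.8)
The plan is to obtain all three assertions from a single elementary principle: the bilinear form of an integral operator is dominated, in absolute value, by the form built from the pointwise absolute value of its kernel, which is then combined with the Cauchy--Schwarz inequality. Throughout I will use the variational formula $\|\TT\| = \sup\{\,|\langle \TT f,g\rangle| : f,g\in L^2(\XX,\mu),\ \|f\|_2=\|g\|_2=1\,\}$, valid for any bounded operator $\TT$ on the Hilbert space $L^2(\XX,\mu)$; part~(a) together with the standing hypothesis $\int_{\XX\times\XX}\kappa^2\,d\mu\,d\mu<\infty$ from Section~\ref{sec:intro} guarantees that $\KK$ (and likewise $\KK'$, $\bar\KK$, $\KK-\KK'$) is bounded, so this formula applies.

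First I would prove (a). Applying Cauchy--Schwarz inside the integral $\KK f(\bfx)=\int_\XX \kappa(\bfx,\bfy)f(\bfy)\,\mu(d\bfy)$ gives $|\KK f(\bfx)|^2\le\big(\int_\XX \kappa^2(\bfx,\bfy)\,\mu(d\bfy)\big)\|f\|_2^2$; integrating in $\bfx$ by Tonelli yields $\|\KK f\|_2^2\le\|\kappa\|_{2,\mu}^2\,\|f\|_2^2$, and dividing by $\|f\|_2$ and taking the supremum over $f\ne0$ gives $\|\KK\|\le\|\kappa\|_{2,\mu}$. The same computation applied to the kernels $|\kappa-\kappa'|$ and $\kappa'$ shows $\bar\KK$ and $\KK'$ (hence $\KK-\KK'$) are bounded, legitimizing the variational formula in (b) and (c).

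For (c), I would note that $\KK-\KK'$ is the integral operator with kernel $\kappa-\kappa'$, and estimate, for $f,g\in L^2(\XX,\mu)$,
\begin{align*}
|\langle(\KK-\KK')f,g\rangle|
&= \Big|\int_{\XX\times\XX}(\kappa-\kappa')(\bfx,\bfy)\,f(\bfy)g(\bfx)\,\mu(d\bfx)\mu(d\bfy)\Big|\\
&\le \int_{\XX\times\XX}|\kappa-\kappa'|(\bfx,\bfy)\,|f(\bfy)|\,|g(\bfx)|\,\mu(d\bfx)\mu(d\bfy)\\
&= \langle\bar\KK|f|,\,|g|\rangle \ \le\ \|\bar\KK\|\,\big\|\,|f|\,\big\|_2\,\big\|\,|g|\,\big\|_2 .
\end{align*}
Since $\big\||f|\big\|_2=\|f\|_2$ and $\big\||g|\big\|_2=\|g\|_2$, taking the supremum over unit-norm $f,g$ gives $\|\KK-\KK'\|\le\|\bar\KK\|$. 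Part (b) follows from the same chain with $\kappa$ in place of $\kappa-\kappa'$ and $\KK'$ in place of $\bar\KK$: nonnegativity of both kernels makes $\kappa=|\kappa|\le|\kappa'|=\kappa'$ exactly the hypothesis, so $\langle\KK|f|,|g|\rangle\le\langle\KK'|f|,|g|\rangle$, whence $|\langle\KK f,g\rangle|\le\langle\KK|f|,|g|\rangle\le\|\KK'\|\,\|f\|_2\|g\|_2$ and $\|\KK\|\le\|\KK'\|$.

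I do not anticipate any genuine obstacle here; the lemma is elementary. The only points needing care are measure-theoretic bookkeeping: using the standing $L^2$ bound on $\kappa$ and Tonelli's theorem to ensure the four operators in play are bounded (so the supremum formula for the operator norm is valid) and that the double integrals above are absolutely convergent for $f,g\in L^2$. It is also worth remarking that symmetry of the kernels is not actually used in (a) or (c), and enters (b) only through the sign constraint $\kappa,\kappa'\ge0$.
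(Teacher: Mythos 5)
Your proof is correct. In fact, the paper itself offers no argument at all for this lemma: it simply cites Lemma~6.5 of \cite{bhamidi-budhiraja-wang2011}, so there is no ``paper's proof'' to compare against line by line. Your self-contained treatment via the bilinear form $|\langle \KK f,g\rangle|$ together with Cauchy--Schwarz is the standard route and handles all three parts cleanly; part~(a) is the usual Hilbert--Schmidt bound, and (b), (c) follow from domination of the bilinear form by the one built from the absolute-value kernel, using (for (b)) that the paper's kernels are nonnegative by definition.

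Two very small remarks. First, when you assert that $\bar\KK$ and $\KK-\KK'$ are bounded because the same Hilbert--Schmidt computation applies, you are tacitly using $|\kappa-\kappa'|^2\le 2\kappa^2+2\kappa'^2$ to get the needed $L^2(\mu\otimes\mu)$ integrability from the standing assumption on $\kappa,\kappa'$; that is trivially true but worth a clause. Second, your closing remark about where symmetry enters is slightly off: the variational identity $\|\TT\|=\sup_{\|f\|_2=\|g\|_2=1}|\langle \TT f,g\rangle|$ holds for arbitrary bounded operators, so symmetry of the kernel is not used anywhere in your argument, including (b); what (b) genuinely needs is the nonnegativity $\kappa,\kappa'\ge 0$, which you do invoke. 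Neither point affects correctness.
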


For the proof of the following lemma we refer the reader to Lemma 6.18 of \cite{bhamidi-budhiraja-wang2011}.
\begin{Lemma}
	\label{lemma:change-of-measure}
	Let $\tilde \mu, \mu$ be two finite measures on the space $\XX$. Assume $\tilde \mu \ll \mu$ and let $g=d\tilde \mu /d\mu$ be the Radon-Nikodym derivative. Let $\tilde \kappa$ be a kernel on $\XX \times \XX$, and define $\kappa$ as 
	$$  \kappa(\bfx,\bfy):=\sqrt{g(\bfx)g(\bfy)} \tilde \kappa(\bfx,\bfy), \;\; \bfx,\bfy\in \XX. $$
	Denote by $\KK$ [resp. $\tilde \KK$] the integral operator on $L^2(\XX, \mu)$ [resp. $L^2(\XX, \tilde \mu)$] associated with $( \kappa, \mu)$ [resp. $( \tilde\kappa, \tilde\mu)$]. Then 
	$ \|\KK\|_{L^2(\mu)}=\|\tilde \KK\|_{L^2(\tilde \mu)} $, where $\|\KK\|_{L^2( \mu)}$ [resp. $\|\tilde \KK\|_{L^2(\tilde \mu)}$] is the norm of the operator $\KK$ [resp. $\tilde \KK$] on $L^2( \mu)$ [resp. $L^2(\tilde\mu)$].
\end{Lemma}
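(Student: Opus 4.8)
The plan is to exhibit an explicit unitary equivalence between $\tilde\KK$ on $L^2(\XX,\tilde\mu)$ and the restriction of $\KK$ to a suitable closed subspace of $L^2(\XX,\mu)$, and then check that $\KK$ ``sees'' only that subspace. Since $\tilde\mu\ll\mu$ with $g=d\tilde\mu/d\mu$, define $U\colon L^2(\XX,\tilde\mu)\to L^2(\XX,\mu)$ by $Uf=\sqrt{g}\,f$. Then $\|Uf\|_{L^2(\mu)}^2=\int_\XX g|f|^2\,d\mu=\int_\XX|f|^2\,d\tilde\mu=\|f\|_{L^2(\tilde\mu)}^2$, so $U$ is an isometry, and its range is the closed subspace $\cH_0:=\{h\in L^2(\XX,\mu): h=0\ \mu\text{-a.e. on }\{g=0\}\}$; indeed, given $h\in\cH_0$, the function equal to $h/\sqrt{g}$ on $\{g>0\}$ and $0$ elsewhere lies in $L^2(\XX,\tilde\mu)$ and is mapped by $U$ to $h$. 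Thus $U$ is a unitary operator from $L^2(\XX,\tilde\mu)$ onto $\cH_0$.

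Next I would verify the intertwining identity $\KK U=U\tilde\KK$. For $f\in L^2(\XX,\tilde\mu)$ put $h=Uf$; using $\kappa(\bfx,\bfy)=\sqrt{g(\bfx)g(\bfy)}\,\tilde\kappa(\bfx,\bfy)$, pulling $\sqrt{g(\bfx)}$ out of the integral, and using $\sqrt{g(\bfy)}\,h(\bfy)=g(\bfy)f(\bfy)$ together with $g(\bfy)\,\mu(d\bfy)=\tilde\mu(d\bfy)$,
\[ \KK h(\bfx)=\sqrt{g(\bfx)}\int_\XX \tilde\kappa(\bfx,\bfy)\,\sqrt{g(\bfy)}\,h(\bfy)\,\mu(d\bfy)=\sqrt{g(\bfx)}\int_\XX \tilde\kappa(\bfx,\bfy)\,f(\bfy)\,\tilde\mu(d\bfy)=\sqrt{g(\bfx)}\,(\tilde\KK f)(\bfx), \]
i.e.\ $\KK h=U(\tilde\KK f)$. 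Along the way one records that $\kappa$ is again a symmetric, nonnegative, product-measurable kernel and that $\int\!\!\int\kappa^2\,d\mu\,d\mu=\int\!\!\int\tilde\kappa^2\,d\tilde\mu\,d\tilde\mu<\infty$, so $\KK$ is a genuine Hilbert--Schmidt integral operator and the norm in \eqref{eqn:operator-norm-def} is well defined.

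Finally I would reduce the operator norm on all of $L^2(\XX,\mu)$ to the norm on $\cH_0$. The displayed computation shows $\KK h(\bfx)=0$ whenever $g(\bfx)=0$, hence $\KK\big(L^2(\XX,\mu)\big)\subseteq\cH_0$; and if $h\in\cH_0^\perp$ (so $h=0$ $\mu$-a.e.\ on $\{g>0\}$) then $\sqrt{g(\bfy)}h(\bfy)=0$ $\mu$-a.e.\ and $\KK h=0$. Writing a general $h\in L^2(\XX,\mu)$ as $h=h_0+h_\perp$ with $h_0\in\cH_0$ and $h_\perp\in\cH_0^\perp$ gives $\KK h=\KK h_0$ with $\|h_0\|\le\|h\|$, so $\|\KK\|_{L^2(\mu)}=\sup_{0\neq h_0\in\cH_0}\|\KK h_0\|/\|h_0\|=\|\KK|_{\cH_0}\|$. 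Combining this with $\KK U=U\tilde\KK$ and the unitarity of $U\colon L^2(\XX,\tilde\mu)\to\cH_0$ yields $\|\KK\|_{L^2(\mu)}=\|\KK|_{\cH_0}\|=\|\tilde\KK\|_{L^2(\tilde\mu)}$.

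The only genuinely delicate point is the set $\{g=0\}$: it is what prevents $U$ from being onto $L^2(\XX,\mu)$ and forces the passage through $\cH_0$. If in addition $\mu\ll\tilde\mu$ (so $g>0$ $\mu$-a.e.\ and the two measures are mutually absolutely continuous), then $\cH_0=L^2(\XX,\mu)$, $U$ is a plain unitary isomorphism intertwining $\tilde\KK$ and $\KK$, and the last paragraph is vacuous; all remaining steps are routine change-of-variables computations.
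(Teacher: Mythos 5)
Your proof is correct and complete. The paper does not reproduce an argument here — it delegates to Lemma 6.18 of the cited companion paper — so there is no in-text proof to compare against, but the route you take (the isometry $Uf=\sqrt{g}\,f$ onto the subspace $\cH_0$ of functions vanishing on $\{g=0\}$, the intertwining identity $\KK U = U\tilde\KK$, and the observation that $\KK$ kills $\cH_0^\perp$ and maps into $\cH_0$) is the standard change-of-measure argument, and your handling of the set $\{g=0\}$ is exactly the care that is needed when $\mu\ll\tilde\mu$ is not assumed.

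One small cosmetic point: in the third paragraph you have a stray reference ``$X_1+X_2 > C$''-style slip does not appear, but you do write ``$\|h_0\|\le\|h\|$, so $\|\KK\|_{L^2(\mu)}=\sup\ldots$''; to be airtight you might add the half-sentence that when $h_0=0$ the ratio $\|\KK h\|/\|h\|$ is $0$ and hence trivially bounded by the supremum over $\cH_0\setminus\{0\}$, and that equality (not just $\le$) holds because $\cH_0\subseteq L^2(\XX,\mu)$. These are trivial and do not affect correctness.
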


We end this section with a lemma drawing a connection between the Yule process and the pure jump Markov processes with distribution $\nu_{s,i}$ that arose in the construction of the inhomogeneous random graphs $\RG(\mvalpha,\beta,\mvgamma)$, see \eqref{eqn:nu-si-defn}. Fix $j\geq 1$ and recall that a rate one Yule process started at time $t=0$ with $j$ individuals is a pure birth Markov process $Y(t)$ with $Y(0) = j$ and the rate of going from state $i$ to $i+1$ given by $\lambda(i) := i$. Also recall from \eqref{eqn:def-xx} that $\cW$  denotes the Skorohod space $\WW:=\DD([0, \infty): \NNN_0)$.   

% The next lemma gives a bound on $w(t)$ for $w \in \WW$ under the law $\nu_{s,i}$, for $s \in [0,t]$ and $1 \le i \le K$.

\begin{Lemma}
	\label{lemm:yule-domn}
Fix $1\leq i\leq K$ and $s\geq 0$ and rate functions $\mvalpha,\beta,\mvgamma$. Let $\{w(t)\}_{t \ge 0}$ be a pure jump Markov process with law $\nu_{s,i}:= \nu_{s,i}(\mvalpha,\beta,\mvgamma)$ as in \eqref{eqn:nu-si-defn}. Then
\begin{enumeratei}
	\item The process $ w^*(t) := w(t/K \|\mvgamma\|_\infty)/K$ can be stochastically dominated by a Yule processes $Y(\cdot)$ starting with two particles (i.e. $Y(0)=2$).
	\item Fix $t> 0$, $s \in [0,t]$ and $1 \le i \le K$. Then we have
	$$\int_{\WW}[w(t)]^2 \nu_{s,i}(dw) \le  6K^2 e^{2t K \|\mvgamma\|_\infty},$$
	and for any $A > 0$ we have
	$$ \nu_{s,i}( w(t) > A ) \le  2 (1- e^{-t K \|\mvgamma\|_\infty})^{A/2K}. $$
\end{enumeratei}
\end{Lemma}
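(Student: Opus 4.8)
Write $\lambda := K\|\mvgamma\|_\infty$. Recall that, under $\nu_{s,i}$, one has $w(r)=0$ for $r<s$, while on $[s,\infty)$ the process coincides with the pure jump Markov process $\tilde w$ with $\tilde w(s)=K+i$ and generator $\calA(u)$ of \eqref{eqn:attachment-operator}; in particular, when $w(r)=k$ the total rate of jumps at time $r$ is $k\sum_{j=1}^{K}\gamma_j(r)\le k\|\mvgamma\|_\infty$, every jump has size at most $K$, and $K+1\le w(s)\le 2K$ since $1\le i\le K$. Because these rates grow at most linearly in the state, $w$ does not explode. The plan is to prove (i) by a deterministic time change together with a ``block'' reduction that replaces $w$ by a unit-increment pure birth process whose jump rate is controlled by its own value, and then to read off (ii) from (i) and the negative binomial description of the Yule process.

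\emph{Part (i).} Define $\hat m(t):=\lceil w(t/\lambda)/K\rceil$, so that $w^*(t)=w(t/\lambda)/K\le \hat m(t)$ for every $t$. Since $w$ vanishes before time $s$, $\hat m(t)=0$ for $t<\lambda s$, and $\hat m(\lambda s)=\lceil (K+i)/K\rceil=2$. For $t>\lambda s$, $\hat m$ changes only at jump times of $r\mapsto w(r/\lambda)$; each such jump increases $w$ by at most $K$ and hence increases $\hat m$ by $0$ or $1$, and when $\hat m(t)=m$ (so $w(t/\lambda)\le Km$) the rate of these jumps equals $\lambda^{-1} w(t/\lambda)\sum_{j=1}^K\gamma_j(t/\lambda)\le \lambda^{-1}(Km)\|\mvgamma\|_\infty=m$. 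Thus $\hat m$ is a pure birth process which is $0$ on $[0,\lambda s)$, equals $2$ at time $\lambda s$, and on $[\lambda s,\infty)$ makes unit increments at a rate not exceeding its current value. Let $Y$ be a rate one Yule process with $Y(0)=2$; then $Y$ is nondecreasing with $Y(t)\ge 2$ for all $t$, so $\hat m(t)\le Y(t)$ automatically on $[0,\lambda s]$, and the usual monotone coupling of the two birth processes on $[\lambda s,\infty)$ extends this to all $t$. Hence $w^*(t)\le \hat m(t)\le Y(t)$ for all $t$, proving (i).

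\emph{Part (ii).} On the coupling of (i), and using $w(t)=K\,w^*(\lambda t)$, we have $w(t)\le K\,Y(\lambda t)$ almost surely. Recall the classical facts that a rate one Yule process started from one individual satisfies $\prob(Y_1(u)\ge k)=(1-e^{-u})^{k-1}$ for $k\ge1$ (a geometric law), so $\E[Y_1(u)]=e^{u}$ and $\E[Y_1(u)^2]=(2-e^{-u})e^{2u}\le 2e^{2u}$, and that $Y(u)\stackrel{d}{=}Y_1^{(a)}(u)+Y_1^{(b)}(u)$ with $Y_1^{(a)},Y_1^{(b)}$ independent copies. Therefore $\E[Y(u)^2]=2\E[Y_1(u)^2]+2(\E[Y_1(u)])^2\le 6e^{2u}$, which gives
\[
\int_{\WW}[w(t)]^2\,\nu_{s,i}(dw)=K^2\,\E[w^*(\lambda t)^2]\le K^2\,\E[Y(\lambda t)^2]\le 6K^2 e^{2\lambda t}.
\]
For the tail, fix $A>0$; since $\{Y_1^{(a)}(\lambda t)\le A/(2K)\}\cap\{Y_1^{(b)}(\lambda t)\le A/(2K)\}$ forces $Y(\lambda t)\le A/K$, a union bound gives $\prob(w(t)>A)\le \prob(Y(\lambda t)>A/K)\le 2\,\prob(Y_1(\lambda t)>A/(2K))$, and the geometric tail estimate for $Y_1$ then yields the stated bound $2(1-e^{-\lambda t})^{A/(2K)}$. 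The one genuinely delicate point is the domination in (i): it is precisely the choices of the scaling factor $\lambda=K\|\mvgamma\|_\infty$ and of the block size $K$ in $\hat m=\lceil w/K\rceil$ that turn the reduced process into a sub-Yule process (accounting for its initial value $K+i\le 2K$ and jump sizes $\le K$); everything in (ii) is then a routine moment and tail computation for the negative binomial law.
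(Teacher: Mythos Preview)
Your proof is correct and follows essentially the same route as the paper's: rescale time and space to compare with a rate-one Yule process started at $2$, then use the negative binomial description of $Y(\lambda t)$ for the moment and tail bounds in part~(ii). The one refinement you add is the explicit ceiling process $\hat m(t)=\lceil w(t/\lambda)/K\rceil$, which turns the comparison into one between genuine integer-valued unit-jump birth processes and thereby makes the monotone coupling in~(i) cleaner than the paper's somewhat informal ``jump sizes $\le 1$ at rate at most one'' argument; part~(ii) is then handled identically in both proofs.
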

\textbf{Proof:} Let us first prove (i).  Note that under the law $\nu_{s,i}$, the process $w$ satisfies $w(u) = 0$ for $u< s$ and $w(s) = K+i \leq 2K$. Further for times $t > s$, by \eqref{eqn:attachment-operator}, the jumps of the $w$ can be bounded as  $\Delta w(t):=  w(t)-w(t-) \le K$ at rate at most $w(t) \|\mvgamma\|_\infty$. The process $w^*(\cdot)$ is obtained by rescaling time and space for the process $w(\cdot)$. This is once again a pure jump Markov process  with jump sizes $\Delta w^*(t) \leq 1 $ which happen at rate at most one.  Further $w^*(0) \leq 2$. This immediately implies that this process is stochastically dominated by a Yule process with $Y(0) =2$. This completes the proof.  % In addition, the rate at which $\tilde w(t)$ jumps at time $t$ can be bounded by
% $$ w(t/K \|\mvgamma\|_\infty) \|\mvgamma\|_\infty \cdot \frac{1}{K \|\mvgamma\|_\infty}  = \tilde w(t).$$
% Thus the claim in (i) immediately follows from the definition of Yule process.

We now consider (ii). We will use the result in part (i). Note that a Yule process started with two individuals at time $t=0$ has the same distribution as the sum of two independent Yule processes $\{Y_1(t)\}_{t \ge 0}$ and $\{Y_2(t)\}_{t \ge 0}$ with $Y_1(0)=Y_2(0)=1$. Now fix $t> 0$, $s\leq t$ and $1\leq i\leq K$. Let $w(\cdot)$ have distribution $\nu_{s,i}$.  From (i) we have
\begin{equation}
\label{eqn:wt-bd-2-yule}
	w(t) \le_d  K \cdot (Y_1( t K \|\mvgamma\|_\infty) + Y_2( t K \|\mvgamma\|_\infty) ).
\end{equation}  
For simplicity write $X_1=Y_1( t K \|\mvgamma\|_\infty)$ and $X_2=Y_2( t K \|\mvgamma\|_\infty)$. Well known results about Yule processes (\cite[Chapter 2]{norris-mc}) say that the random variables $X_1$ and $X_2$ have a Geometric distribution with $p:=e^{-tK\|\mvgamma\|_\infty}$.  The first bound in (ii) follows from the Geometric distribution and the fact
\begin{align*}
	\int_{\WW}[w(t)]^2 \nu_{s,i}(dw) \le K^2 \E[(X_1+X_2)^2].
\end{align*}
The second bound in (ii) follows from 
$$ \nu_{s,i}(\{ w(t) > A \}) \le 2 \prob\{X_1 > A/2K\}. $$
This completes the proof. \qed

\subsection{Some perturbation estimates for $\RG(\va,b,\vc)$. }
\label{sec:proof-norm-abc}

Recall that Lemma \ref{lemma:bsr-to-irg} coupled the evolution of $\mirror(t)$ (equivalently $\BS^*(t))$ with two inhomogeneous random graphs $\RG(\va^+,b,\vc^+)(t)$ and $\RG(\va^-,b,\vc^-)(t)$ which can be considered as perturbations of $\RG(\va,b,\vc)(t)$. 
The aim of this section is to understand the effect of such perturbations on the associated operator norms.   Throughout this section $\XX$ and $\phi_t$ are as in \eqref{eqn:def-xx} and \eqref{eqn:def-phi}, respectively.
Given the basic structure $\{\XX, \mu_t, \kappa_t, \phi_t\}$, $t >0$, associated with rate functions $(\mvalpha,\beta,\mvgamma)$, we denote the norm of the integral operator $\KK_t$
associated with $(\kappa_t, \phi_t)$ as $\rho_t(\mvalpha,\beta,\mvgamma)$.

The following proposition
which is the main result of this section studies the effect of perturbations of $(\mvalpha,\beta,\mvgamma)$ on this norm.
For a $K$-dimensional vector $\bfv = (v_1, \cdots v_K)$ and a scalar $\theta$, $\bfv + \theta$ denotes the vector $(v_1 + \theta, \cdots v_K+\theta)$
and $(\bfv +\theta)^+$ denotes the vector $((v_1 + \theta)^+, \cdots (v_K+\theta)^+)$.
\begin{Proposition}
	\label{lemma:error-norm-abc}
	For  $\eps >0 $   let $\rho_t^+=\rho_t(\mvalpha+\eps,\beta+\eps,\mvgamma+\eps)$ and $\rho_t^-=\rho_t((\mvalpha-\eps)^+,(\beta-\eps)^+,(\mvgamma-\eps)^+)$,
	where $(\mvalpha,\beta,\mvgamma)$ are rate functions. 
	Assume that
	$\max\{||\mvalpha +\eps||_{\infty},||\beta +\eps||_{\infty},
	||\mvgamma +\eps||_{\infty}\} < 1.$
	For every $T > 0$, there is a  $C_5 \in (0,\infty)$ such that for all $\eps> 0$ and $t \in [0,T]$,
	$$ \max\{ |\rho_t-\rho_t^+|, |\rho_t-\rho_t^-| \}  \le C_5 \sqrt{\eps} \cdot \left(-\log \eps\right)^2.$$
\end{Proposition}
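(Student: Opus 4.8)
\emph{Plan of proof.}
Here $\rho_t=\rho_t(\mvalpha,\beta,\mvgamma)$ denotes the operator norm of the integral operator associated with $(\kappa_t,\mu_t)$ (the argument is unaffected if one instead works with the weighted measure $\phi_t\,d\mu_t$). The plan is to split the simultaneous perturbation of $(\mvalpha,\beta,\mvgamma)$ into three successive one–variable perturbations, interpolating through $\theta_0=(\mvalpha,\beta,\mvgamma)$, $\theta_1=(\mvalpha+\eps,\beta,\mvgamma)$, $\theta_2=(\mvalpha+\eps,\beta+\eps,\mvgamma)$, $\theta_3=(\mvalpha+\eps,\beta+\eps,\mvgamma+\eps)$, so that $\rho_t^+=\rho_t(\theta_3)$, and to bound each of $|\rho_t(\theta_k)-\rho_t(\theta_{k-1})|$; the bound for $\rho_t^-$ is obtained in exactly the same way with $-(\cdot\wedge\eps)$ in place of $+\eps$. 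In every step either the kernel $\kappa_t$ or the measure $\mu_t$ changes, but not both. The $\beta$–step changes only $\kappa_t$, and $|\kappa_t(\theta_2)-\kappa_t(\theta_1)|(x,y)=\eps\int_0^t w(u)\tilde w(u)\,du\le \eps t\,\phi_t(x)\phi_t(y)$ since $w$ is nondecreasing; hence by Lemma \ref{lemma:basic-operator-norm}(a),(c) and the second–moment bound of Lemma \ref{lemm:yule-domn}(ii), $|\rho_t(\theta_2)-\rho_t(\theta_1)|\le \eps t\int_\XX\phi_t(x)^2\mu_t(dx)=O(\eps)$. The $\mvalpha$–step changes only $\mu_t$; since a path drawn from $\nu_{u,i}$ determines $(u,i)$ (it vanishes before $u$ and equals $K+i$ at $u$), the fibres of $\mu_t$ over distinct $(u,i)$ are mutually singular, so $\mu_t(\theta_0)\ll\mu_t(\theta_1)$ with density $g(s,w)=\alpha_i(s)/(\alpha_i(s)+\eps)\in(0,1]$. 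By Lemma \ref{lemma:change-of-measure}, $\rho_t(\theta_0)=\|\hat\KK\|$ for the kernel $\hat\kappa(x,y)=\sqrt{g(x)g(y)}\,\kappa_t(x,y)$ on $L^2(\mu_t(\theta_1))$, and Lemma \ref{lemma:basic-operator-norm}(a),(c), together with $1-\sqrt{g(x)g(y)}\le(1-g(x))+(1-g(y))$ and $\kappa_t(x,y)\le t\phi_t(x)\phi_t(y)$, bounds $|\rho_t(\theta_1)-\rho_t(\theta_0)|$ by a constant times $\big(\int_\XX(1-g(x))^2\phi_t(x)^2\mu_t(\theta_1)(dx)\big)^{1/2}$; this integral equals $\sum_i\int_0^t\frac{\eps^2}{\alpha_i(u)+\eps}\int_\WW w(t)^2\nu_{u,i}(dw)\,du=O(\eps)$ because $\eps^2/(\alpha_i+\eps)\le\eps$ and the inner integral is bounded by Lemma \ref{lemm:yule-domn}(ii). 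So the $\mvalpha$–step is $O(\sqrt\eps)$, and $|\rho_t-\rho_t^+|$ is controlled up to $O(\sqrt\eps)$ by the $\mvgamma$–step.

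\emph{The $\mvgamma$–step.} Here $\kappa_t$ is unchanged, while $\mu_t(\theta_2)$ and $\mu_t(\theta_3)$ differ only through the law $\nu_{u,i}$, which is that of a Markov jump process with rates $k\gamma_j(r)$ versus $k(\gamma_j(r)+\eps)$. These laws are equivalent on $[0,t]$ with Radon--Nikodym density $L(w)=\prod_{l}\frac{\gamma_{j_l}(r_l)}{\gamma_{j_l}(r_l)+\eps}\,\exp\!\big(K\eps\int_0^t w(r)\,dr\big)$, the product running over the jump times and sizes $(r_l,j_l)$ of $w$. I would proceed in two steps. (i) \emph{Truncate} at level $A=A(\eps):=\lceil c_0\log(1/\eps)\rceil$ for a suitable $c_0=c_0(T,K)$: restricting $\mu_t$ to $\XX_A:=\{(s,w):w(t)\le A\}$, the tail bound of Lemma \ref{lemm:yule-domn}(ii) combined with Cauchy--Schwarz against a fourth moment gives $\int_{\XX\setminus\XX_A}\phi_t^2\,d\mu_t(\theta_j)=O(\eps)$ for $j=2,3$, $t\le T$, so by Lemma \ref{lemma:basic-operator-norm} (again via $\kappa_t\le t\phi_t\phi_t$) the truncation changes $\rho_t$ by $O(\sqrt\eps)$. (ii) On $\XX_A$, where $\kappa_t\le tA^2$, apply Lemma \ref{lemma:change-of-measure} with density $g=L$ and then Lemma \ref{lemma:basic-operator-norm}(a),(c) and $1-\sqrt{g(x)g(y)}\le(1-g(x))+(1-g(y))$ to obtain
\[
\big|\rho_t(\theta_2)\big|_{\XX_A}-\rho_t(\theta_3)\big|_{\XX_A}\big|\;\le\;4tA^2\,\big(\mu_t(\theta_3)(\XX)\big)^{1/2}\,\|1-L\|_{L^2(\mu_t(\theta_3)|_{\XX_A})}.
\]
Since $\mu_t(\theta_3)(\XX)=O(1)$, everything reduces to the estimate $\|1-L\|^2_{L^2(\nu_{s,i}(\mvgamma+\eps))}=O(\eps)$, uniformly in $s\le t\le T$ and $i$.

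\emph{The key estimate and the main obstacle.} Writing $\nu'=\nu_{s,i}(\mvgamma+\eps)$ and $\nu=\nu_{s,i}(\mvgamma)$, one has $\int(1-L)^2\,d\nu'=\int L^2\,d\nu'-1=\E_\nu[L]-1=\chi^2(\nu\,\|\,\nu')$, and restriction to $\XX_A$ only decreases this. A standard computation of the $\chi^2$–divergence between the laws of two Markov jump processes (the state–dependent analogue of the Poisson identity $\chi^2(\mathrm{Poi}(\lambda)\|\mathrm{Poi}(\mu))=e^{t(\lambda-\mu)^2/\mu}-1$) gives $\chi^2(\nu\|\nu')\le\E_{\nu'}\big[\exp\!\big(\int_0^t w(r)\sum_{j=1}^K\tfrac{\eps^2}{\gamma_j(r)+\eps}\,dr\big)\big]-1\le\E_{\nu'}\big[\exp\!\big(K\eps\int_0^t w(r)\,dr\big)\big]-1$, where the last inequality uses $\gamma_j(r)+\eps\ge\eps$, i.e. the nonnegativity of the rate functions, so that the integrand remains bounded even where $\gamma_j$ vanishes. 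Finally $\int_0^t w(r)\,dr\le t\,w(t)$, and by Lemma \ref{lemm:yule-domn}(i) $w(t)$ is stochastically dominated by $K$ times a Yule process from two particles, which has finite exponential moments below $-\log(1-e^{-TK})$; hence for $\eps$ below a constant depending on $(T,K)$, $\E_{\nu'}[\exp(K\eps\int_0^t w\,dr)]=1+O(\eps)$ uniformly in $s\le t\le T$ and $i$, so $\chi^2(\nu\|\nu')=O(\eps)$. Summing the three steps gives $|\rho_t-\rho_t^+|=O(\sqrt\eps)+O(\eps)+O(A^2\sqrt\eps)=O\!\big(\sqrt\eps\,(-\log\eps)^2\big)$, and $|\rho_t-\rho_t^-|$ is bounded the same way (the perturbed measure is then dominated by the unperturbed one, and the $\chi^2$ identity and bound hold verbatim). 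The main difficulty is precisely this $\mvgamma$–step: a naive good/bad–set split of the density $L$ only yields $O(\eps^{1/3})$, and recovering the sharp exponent $1/2$ needs \emph{both} the logarithmic truncation --- which is what produces the $(-\log\eps)^2$ factor through $\kappa_t\le tA^2$ --- and the sharp bound $\chi^2=O(\eps)$ resting on $\gamma_j+\eps\ge\eps$. (We may and do assume $\eps$ is small throughout; the complementary range is handled by elementary means using the uniform boundedness of $\rho_t$ on $[0,T]$.)
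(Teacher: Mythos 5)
Your decomposition of the perturbation into three one-variable steps, with the $\beta$-step via direct kernel comparison, the $\mvalpha$-step via a Radon--Nikodym density and Lemma~\ref{lemma:change-of-measure}, and the $\mvgamma$-step via a logarithmic truncation $A\asymp\log(1/\eps)$ combined with a change of measure, matches the paper's Lemmas~\ref{lemma:disturb-b}--\ref{lemma:disturb-c} step for step, and the $\beta$- and $\mvalpha$-steps are essentially identical to the paper's. Where you depart from the paper is in the key estimate inside the $\mvgamma$-step. The paper works with the \emph{Hellinger} quantity $1-\E_{\nu'}[\sqrt{L}]$: it uses the identity $\int|\sqrt{L(w)L(\tilde w)}-1|^2\,d\nu'd\nu''=2-2\,\E_{\nu'}[\sqrt L]\,\E_{\nu''}[\sqrt L]$, then the elementary $a+b\le ab+1$ for $a,b\in[0,1]$, and finally an It\^o computation for $\sqrt{L_s(t)}$ that exploits $\E_{\nu'}[L_s(u)]=1$ so Cauchy--Schwarz cleanly produces the $\eps$. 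You instead aim for the $\chi^2$-divergence $\E_{\nu}[L]-1$, with a Girsanov-style identity for Markov jump processes and the a-priori lower bound $\gamma_j+\eps\ge\eps$. That is a legitimate and arguably more probabilistic route, but as written it has two soft spots. First, the pointwise inequality $1-\sqrt{g(\bfx)g(\bfy)}\le(1-g(\bfx))+(1-g(\bfy))$ is valid only for $g\le1$; in the $\mvgamma$-step the Radon--Nikodym density $L$ contains the factor $\exp(K\eps\int_0^t w)$ and is \emph{not} bounded by $1$, so this step fails as stated. It is repairable: either use the exact identity above (as the paper does), or the two-sided bound $|\sqrt{L(\bfx)L(\bfy)}-1|^2\le 2L(\bfx)(\sqrt{L(\bfy)}-1)^2+2(\sqrt{L(\bfx)}-1)^2$ and $\int L\,d\nu'=1$, which reduces to the squared Hellinger distance $H^2$, and then invoke $H^2\le\chi^2$ to plug in your estimate. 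Second, the asserted ``standard'' inequality $\chi^2(\nu\|\nu')\le\E_{\nu'}\big[\exp\big(\int_0^t w\sum_j\tfrac{\eps^2}{\gamma_j+\eps}\big)\big]-1$ is true but not a one-line identity: one has the exact formula $\chi^2(\nu\|\nu')=\E_{\nu''}[A_t]-1$ where $\nu''$ is the jump law with rates $w\gamma_j^2/(\gamma_j+\eps)\le w(\gamma_j+\eps)$ and $A_t=\exp\big(\int_0^t w\sum_j\tfrac{\eps^2}{\gamma_j+\eps}\big)$, after which the inequality follows from the stochastic domination of $w$ under $\nu''$ by $w$ under $\nu'$ together with the monotonicity of $A_t$ in the path. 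Once those two points are filled in, your $\chi^2$-route yields the same $O\big(\sqrt{\eps}(-\log\eps)^2\big)$ bound; the paper's Hellinger-plus-It\^o route avoids both issues by never needing $L\le1$ and by using $\E[L]=1$ through Cauchy--Schwarz rather than a domination argument.
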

Proof of Proposition \ref{lemma:error-norm-abc} relies on Lemmas \ref{lemma:disturb-b} -- \ref{lemma:disturb-c} below, and is given at the end of the section.
We  analyze the effect of perturbation of   $\beta$, $\mvalpha$ and  $\mvgamma$ separately in Lemmas  \ref{lemma:disturb-b}, \ref{lemma:disturb-a}) and  \ref{lemma:disturb-c}, respectively.  
% 
% 
% 
% 
% Let us now proceed with the proof of the Proposition. We will need some basic probability bounds. 
% Fix $s> 0, 1\leq i\leq K$ and recall the probability measure $\nu_{s,i}$ describing the probability distribution of the pure jump Markov process $w(\cdot)$ with $w(u) = 0$ for $u<s$, $w(s) = K+i$ and with generator $\cA$ as in \eqref{eqn:attachment-operator-gen}.    Write $\E_{s,i}$ to denote expectation with respect to $\nu_{s,i}$. Recall from \eqref{eqn:yule-domination} that $w(\cdot)$ is stochastically dominated by a simple modification of the Yule process $\{Z(u)\}_{0 \le u \le T}$. 
% % In the proof of Lemma \ref{lemma:error-norm-abc}, we shall perturb the parameters $a,b,c$ in $\rho_t(a,b,c)$ one at a time and identify the change caused by this perturbation.
% We write $ \beta^\eps$ when analyzing the perturbations $(\beta \pm \eps)^+$, similarly $\mvalpha^\eps, \mvgamma^\eps$. In each case it will be clear which of the three sets of functions we perturb. Write $\kappa^\eps_t$ and $\mu_t^\eps$ for the corresponding kernel and type measures respectively. We start by bounding the effect of a perturbation of $\beta$.  
\begin{Lemma}[Perturbations of $\beta$]
	\label{lemma:disturb-b}
	Let $(\mvalpha,\beta,\mvgamma)$ be rate functions and $\beta^\eps$ be be a nonnegative function on $[0,\infty)$  with $\sup_{0 \le s < \infty}|\beta^\eps(s)-\beta(s)| \le \eps$. Then 
	$$ |\rho_t(\mvalpha,\beta,\mvgamma)-\rho_t(\mvalpha,\beta^\eps,\mvgamma)| \le C \eps,$$
	where $C= 6 t^2 K^3 \|\mvalpha\|_\infty e^{2t\|\mvgamma\|_\infty }$.
\end{Lemma}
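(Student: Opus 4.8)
The plan is to bound the operator norm difference $|\rho_t(\mvalpha,\beta,\mvgamma)-\rho_t(\mvalpha,\beta^\eps,\mvgamma)|$ by estimating the norm of the integral operator associated with the difference of the two kernels, and then controlling that norm by the Hilbert--Schmidt bound from Lemma \ref{lemma:basic-operator-norm}(a). Concretely, write $\kappa_t$ for the kernel built from $(\mvalpha,\beta,\mvgamma)$ and $\kappa_t^\eps$ for the one built from $(\mvalpha,\beta^\eps,\mvgamma)$; note that by \eqref{eqn:def-mu-gen} the two share the \emph{same} type measure $\mu_t = \mu_t(\mvalpha,\beta,\mvgamma)$ (since $\mu_t$ depends only on $\mvalpha$ and $\mvgamma$, not on $\beta$), so the two integral operators act on the same Hilbert space $L^2(\cX,\mu_t)$. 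By Lemma \ref{lemma:basic-operator-norm}(c), $|\rho_t(\mvalpha,\beta,\mvgamma)-\rho_t(\mvalpha,\beta^\eps,\mvgamma)| = \|\KK_t - \KK_t^\eps\| \le \|\bar\KK_t\|$, where $\bar\KK_t$ is associated with $(|\kappa_t-\kappa_t^\eps|,\mu_t)$, and by part (a) of the same lemma this is at most $\||\kappa_t-\kappa_t^\eps|\|_{2,\mu_t}$.

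Next I would compute the pointwise difference of the kernels. From \eqref{eqn:def-kappa-gen},
\[
|\kappa_t((s,w),(r,\tilde w)) - \kappa_t^\eps((s,w),(r,\tilde w))| = \left| \int_0^t w(u)\tilde w(u)(\beta(u)-\beta^\eps(u))\,du \right| \le \eps \int_0^t w(u)\tilde w(u)\,du \le \eps t \, w(t)\tilde w(t),
\]
using $|\beta-\beta^\eps|\le\eps$ and the monotonicity $w(u)\le w(t)$ that holds for every path in the support of $\nu_{s,i}$ (these are nondecreasing pure-jump paths, by \eqref{eqn:attachment-operator}). Squaring and integrating against $\mu_t\otimes\mu_t$ gives
\[
\||\kappa_t-\kappa_t^\eps|\|_{2,\mu_t}^2 \le \eps^2 t^2 \left( \int_\cX w(t)^2\,\mu_t(d(s,w)) \right)^2,
\]
so it remains to bound $\int_\cX w(t)^2\,\mu_t(d(s,w))$. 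By the definition \eqref{eqn:def-mu-gen} this equals $\sum_{i=1}^K \int_0^t \alpha_i(u)\big(\int_\WW w(t)^2\,\nu_{u,i}(dw)\big)du$, and each inner integral is bounded by $6K^2 e^{2tK\|\mvgamma\|_\infty}$ by Lemma \ref{lemm:yule-domn}(ii). Hence $\int_\cX w(t)^2\,\mu_t \le 6K^2 e^{2tK\|\mvgamma\|_\infty}\sum_{i=1}^K\int_0^t\alpha_i(u)\,du \le 6K^2 t\|\mvalpha\|_\infty e^{2tK\|\mvgamma\|_\infty}$. Putting this back in, $\||\kappa_t-\kappa_t^\eps|\|_{2,\mu_t} \le \eps t \cdot 6K^2 t\|\mvalpha\|_\infty e^{2tK\|\mvgamma\|_\infty} = 6t^2K^2\|\mvalpha\|_\infty e^{2tK\|\mvgamma\|_\infty}\,\eps$, which is the claimed bound up to the exact constant (I would just absorb whatever factor of $K$ the bookkeeping produces into $C$).

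I do not expect a serious obstacle here; this lemma is essentially a Hilbert--Schmidt estimate plus the Yule-process second-moment bound already established. The only points requiring a little care are: verifying that the type measure genuinely does not depend on $\beta$ (so that the two operators live on the same space and Lemma \ref{lemma:basic-operator-norm}(c) applies cleanly), and using the monotonicity of the attachment paths to replace $w(u)$ by $w(t)$ inside the time integral defining $\kappa_t$. One should also double-check the constant against the statement — the excerpt writes $C = 6t^2K^3\|\mvalpha\|_\infty e^{2t\|\mvgamma\|_\infty}$ with $K^3$ and with $e^{2t\|\mvgamma\|_\infty}$ rather than $e^{2tK\|\mvgamma\|_\infty}$, so the time-rescaling in Lemma \ref{lemm:yule-domn}(i) presumably contributes the extra power of $K$ and a reparametrization of the exponent; these are routine constant-chasing adjustments and do not affect the structure of the argument.
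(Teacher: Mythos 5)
Your proof is correct and is essentially identical to the paper's: both observe that $\mu_t$ is unaffected by a change in $\beta$, bound the pointwise kernel difference by $\eps t\,w(t)\tilde w(t)$ using monotonicity of the paths, pass to the Hilbert--Schmidt norm via Lemma \ref{lemma:basic-operator-norm}(a,c), and close with the second-moment bound of Lemma \ref{lemm:yule-domn}(ii). You are also right about the constant: the paper's own computation yields $6t^2K^2\|\mvalpha\|_\infty e^{2tK\|\mvgamma\|_\infty}\eps$ (and then inserts a spurious extra factor of $K$), so the exponent $e^{2t\|\mvgamma\|_\infty}$ in the stated $C$ is a typo for $e^{2tK\|\mvgamma\|_\infty}$, and $K^3$ can be improved to $K^2$.
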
 
{\bf Proof:} Let $(\mu_t, \kappa_t)$ be the type measure and kernel associated with $(\mvalpha,\beta,\mvgamma)$
Note that a perturbation in $\beta$ only affects the kernel $\kappa_t$ and not  $\mu_t$. 
Recall the representation of $\mu_t$ in terms of probability measures $\{\nu_{u,i}, u \in [0,t], i = 1, \cdots , K\}$.  
From Lemma \ref{lemm:yule-domn}(ii)  
\begin{equation}
 \int_{\WW}[w(t)]^2 \nu_{u,i}(dw) \le  6K^2 e^{2t K\|\mvgamma\|_\infty}, \mbox { for all } u \in [0, t], i = 1, \cdots K. \label{eqn:bound-yule}
\end{equation}
Denote the kernel obtained by replacing $\beta$ by $\beta^{\eps}$ in \eqref{eqn:def-kappa-gen} by $\kappa^{\eps}_t$.
  Since $\|\beta - \beta^\eps\|_\infty < \eps$, we have from \eqref{eqn:def-kappa-gen} that
\[|\kappa_t(\bfx,\bfy)- \kappa^\eps_t(\bfx,\bfy)| \leq \eps \int_0^t w(u){\tilde w}(u) du \leq \eps t w(t){\tilde w}(t), \]
$\mu_t \otimes \mu_t$ a.e.  $(\bfx, \bfy) =((s,w),(r, \tilde w))$.

 By Lemma \ref{lemma:basic-operator-norm} (a) and (c)  we now have  
%\todo[inline]{give reference to equation: The reason for the first inequality is that $|\rho(\kappa) - \rho(\kappa')| \le \rho(|\kappa-\kappa'|)$ and the fact that the operator norms can be bounded by the $L^2$ norm of the kernel. A separate lemma about the properties of operator norms might be added later ---Xuan.}
\begin{align*}
	|\rho_t(\mvalpha,\beta,\mvgamma)-\rho_t(\mvalpha,\beta^\eps,\mvgamma)| 
	\le&  \left( \int_{\XX \times \XX} |\kappa_t(\bfx,\bfy)-\kappa_t^\eps(\bfx,\bfy)|^2 d\mu_t(\bfx)d\mu_t(\bfy) \right)^{1/2}\\
	\le& \left( \int_{\XX \times \XX} (\eps t w(t) \tilde w(t))^2 d\mu_t(\bfx)d\mu_t(\bfy) \right)^{1/2}\\
	=& \eps t \sum_{i=1}^K\int_0^t \alpha_i(s) \left[\int_{\WW} [w(t)]^2 \nu_{s,i}(dw)\right] ds \\
	%&\le& \eps t \sum_{i=1}^K\int_0^t \left[\alpha_i(s)\E [Z^2(t)]\right] ds\\
	\le& \eps t \cdot t\|\mvalpha\|_\infty \cdot K \cdot 6K^2 e^{2tK\|\mvgamma\|_\infty},
\end{align*}
% Here the reason for the first line is that $|\rho(\kappa) - \rho(\kappa')| \le \rho(|\kappa-\kappa'|)$ and the fact that the operator norms can be bounded by the $L^2$ norm of the kernel. The equality in the second line comes from expressing out the double integral by its definition as in \eqref{eqn:def-mu}. \\
where the last inequality follows from \eqref{eqn:bound-yule}.  The result follows. \qed

When $\mvalpha$ or $\mvgamma$ is perturbed, the underlying measure $\mu_t$ changes as well and thus one needs to analyze the corresponding Radon-Nikodym derivatives.
This is done in the  following two lemmas. 
%Note that in the description of the measure $\mu_t$, for $1\leq i\leq K$,  $\alpha_i$ described the rate of the initial jump being of size $K+i$. For ease of notation, it will be convenient to write $\alpha_{i+K} = \alpha_i$. 
 We denote by $\set{\clg_s}_{0\leq s < \infty}$  the canonical filtration on $\cD([0,\infty):\NNN_0)$. 
In the following we follow the convention that $0/0 = 1$.
\begin{Lemma}
	\label{lemma:rn-derivative}
Fix $\eps> 0$ and let $(\mvalpha,\beta, \mvgamma)$, $(\tilde \mvalpha,  \tilde \beta, \tilde \mvgamma)$ be two sets of rate functions such that for all $1 \le i \le K$ and $s \ge 0$,
	$$\alpha_i(s) - \eps \le \tilde \alpha_i(s) \le \alpha_i(s), \;\mbox{ and } \; \gamma_i(s) - \eps \le \tilde \gamma_i(s) \le \gamma_i(s).$$
	Fix $t \ge 0$ and let $\mu_t$ and $\tilde \mu_t$ be the corresponding type measures on $\XX$. For $(s,w) \in \XX$ and $j\geq 1$ let $\tau_j^s$ for the time of the $j$-th jump of $w(\cdot)$ after time $s$ ($\mu_t$ a.s.).  Also write $\Delta(u)=w(u)-w(u-)$ for $u \ge 0$. Then there exists $\eps_0 > 0$ such that for all $\eps \in (0, \eps_0)$,
	$\tilde \mu \ll \mu$ and 
	$$ \frac{d \tilde \mu_t}{d \mu_t}(s,w)= \frac{\tilde \alpha_{\Delta(s)-K}(s)}{\alpha_{\Delta(s)-K}(s)} \times \Pi_{j : \tau_j^s \le t} \frac{\tilde \gamma_{\Delta(\tau_j^s)}(\tau_j^s)}{\gamma_{\Delta(\tau_j^s)}(\tau_j)} \times \exp \left\{ -\int_s^t w(u) \left[ \sum_{i=1}^K \tilde \gamma_i(u)- \sum_{i=1}^K \gamma_i(u) \right ]du \right\}.  $$ 
\end{Lemma}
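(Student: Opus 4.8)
The plan is to identify the type measure $\mu_t$ with a (finite) mixture of laws of pure-jump Markov processes and then apply the classical Girsanov-type change-of-measure formula for pure-jump processes. First I would recall from \eqref{eqn:def-mu-gen} that for any $f \in \BM(\XX)$,
\[
\int_\XX f\, d\mu_t = \sum_{i=1}^K \int_0^t \alpha_i(u)\Big(\int_\WW f(u,w)\,\nu_{u,i}(dw)\Big)\,du,
\]
so $\mu_t$ is absolutely continuous with respect to the measure $\bar\mu_t$ obtained by replacing each $\alpha_i(u)$ by the constant $1$ (or by a fixed reference rate function bounded away from $0$); the first factor $\tilde\alpha_{\Delta(s)-K}(s)/\alpha_{\Delta(s)-K}(s)$ will come precisely from the density of the ``arrival/immigration'' part, since under $\mu_t$ the pair $(s, \Delta(s))$ (immigration time and initial jump, recalling $w(s) = K+i$ so $\Delta(s) = K+i$, i.e. $i = \Delta(s)-K$) has intensity $\alpha_i(u)\,du$ in the $u$-variable. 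Here one uses the convention $0/0=1$ and the standing assumption (Proposition \ref{prop:error-abc}(a)) that $\alpha_i(u), \gamma_i(u) > 0$ for $u>0$, together with $\tilde\alpha_i \ge \alpha_i - \eps$ and $\tilde\gamma_i \ge \gamma_i - \eps$, to guarantee that for $\eps$ small enough the ratios are finite and $\tilde\mu_t \ll \mu_t$ rather than merely mutually singular; this is where the threshold $\eps_0$ enters (one needs $\alpha_i, \gamma_i$ uniformly positive on the relevant compact time interval, hence the restriction $t \ge 0$ fixed and bounded away in the denominators by continuity).

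Next I would handle the ``dynamic'' part. Conditionally on the immigration data $(s, i)$, the law $\nu_{u,i}$ restricted to $[s,\infty)$ (namely $\tilde\nu_{s,i}$ in the notation of \eqref{eqn:attachment-operator}) is the law of a pure-jump Markov process started at $K+i$ with generator $(\calA(u)f)(k) = \sum_{j=1}^K k\gamma_j(u)(f(k+j)-f(k))$; equivalently, from state $k$ it jumps by $j$ at rate $k\gamma_j(u)$, so the total jump rate from state $k$ at time $u$ is $k\sum_{j=1}^K \gamma_j(u)$. The Radon--Nikodym derivative between two such inhomogeneous pure-jump Markov laws on the Skorohod space, restricted to $\clg_t$, is the standard exponential formula: it is the product, over the jump times $\tau^s_j \le t$, of the ratio of jump-rate densities $\tilde\gamma_{\Delta(\tau^s_j)}(\tau^s_j)/\gamma_{\Delta(\tau^s_j)}(\tau^s_j)$ (the jump of size $\Delta(\tau^s_j) = w(\tau^s_j)-w(\tau^s_j{-})$ from the current state occurs at the perturbed-vs-unperturbed per-particle rate, and the factor $w(u)$ cancels in the ratio, which is why only the per-$j$ rates appear), times the compensator correction $\exp\{-\int_s^t w(u)[\sum_{i=1}^K \tilde\gamma_i(u) - \sum_{i=1}^K \gamma_i(u)]\,du\}$ coming from the difference of total-rate integrals. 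I would cite a standard reference for this formula (e.g. the change-of-measure theorem for jump processes / marked point processes, as in Jacod--Shiryaev or Liptser--Shiryaev), verify its integrability hypotheses using that all rates are bounded (so the Dol\'eans exponential is a genuine martingale on $[0,t]$, using Lemma \ref{lemm:yule-domn} to control moments of $w(u)$ if needed), and then combine the immigration density and the conditional dynamic density by disintegration to obtain the claimed product formula for $d\tilde\mu_t/d\mu_t$ on $\XX$.

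The main obstacle, I expect, is bookkeeping rather than conceptual: one must be careful that the formula is stated on the $\sigma$-field $\clg_t$ (the lemma only claims the density as a function of the path up to time $t$, consistent with $\mu_t$ living on paths frozen after $t$), that the immigration-rate ratio is correctly attributed to the first jump time $\tau^s_0 = s$ with $\Delta(s) = K+i$, and that no double-counting occurs between the ``initial condition'' contribution and the ``first dynamic jump'' contribution — the process is \emph{born} at $s$ with value $K+i$ (an immigration event governed by $\mvalpha$), and only subsequent attachment jumps are governed by $\mvgamma$, so the two mechanisms are genuinely distinct and the density factorizes cleanly. A secondary point to get right is the $\eps_0$: one needs $\alpha_i$ and $\gamma_i$ bounded below by a positive constant on the time-horizon implicit in $\mu_t$ (which holds by Proposition \ref{prop:error-abc}(a) and continuity on any fixed interval $[\delta, t]$, with the $u\to 0$ behaviour handled by noting $\alpha_i(u)\to$ its limiting value or by absorbing it into the constant), so that $\tilde\alpha_i \ge \alpha_i - \eps > 0$ and $\tilde\gamma_i \ge \gamma_i - \eps > 0$, ensuring absolute continuity in the stated direction. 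Once these points are nailed down, the formula follows directly from the cited change-of-measure theorem.
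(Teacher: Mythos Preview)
Your plan matches the paper's proof: decompose $\mu_t = \sum_i \mu_t^i$ with $\mu_t^i(du\,dw) = \alpha_i(u)\nu_{u,i}(dw)\,1_{[0,t]}(u)\,du$ (these summands being mutually singular in $i$), reduce to the Girsanov-type formula for the pure-jump laws $\tilde\nu_{s,k}$ versus $\nu_{s,k}$, and then verify that the Dol\'eans exponential $L_s^t$ is a genuine martingale via the Yule domination of Lemma~\ref{lemm:yule-domn}. The paper carries this out citing Br\'emaud for the jump-process change of measure and writes $L_s^v$ explicitly as a stochastic integral against the compensated counting processes $M_i$; otherwise the argument is exactly what you describe.

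One correction worth flagging: your identification of where $\eps_0$ enters is off. You do \emph{not} need $\alpha_i, \gamma_i$ uniformly bounded away from zero---indeed they typically vanish at $u=0$, and absolute continuity $\tilde\mu_t \ll \mu_t$ already follows from $\tilde\alpha_i \le \alpha_i$ and $\tilde\gamma_i \le \gamma_i$ (under $\mu_t$, a jump of size $j$ at time $u$ occurs only when $\gamma_j(u)>0$, so the ratios are $\mu_t$-a.s.\ well-defined with the $0/0=1$ convention). The threshold $\eps_0$ is instead needed for the \emph{martingale} property of $L_s^t$: one has the crude bound $L_s^u \le e^{K\eps t w(t)}$, and the integrability condition $\int_\WW w(t)e^{K\eps t w(t)}\,d\nu_{s,k}(w) < \infty$ (which is what upgrades the local martingale/supermartingale to a true martingale) holds only for $\eps$ small, since by Lemma~\ref{lemm:yule-domn} the variable $w(t)$ is dominated by a sum of geometrics whose moment generating function is finite only in a neighbourhood of the origin. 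You already mention this integrability check in passing; just be aware that this, not positivity of the rates, is the source of the smallness restriction on $\eps$.
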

\textbf{Proof: } 
For $i=1, \cdots K$, define finite measures $\mu^i_t$, $\tilde \mu^i_t$ on $\XX$ as
$$
\mu^i_t(du\, dw) = \alpha_i(u) \nu_{u,i}(dw)1_{[0,t]}(u) du, \; \tilde \mu^i_t(du\, dw) = \tilde \alpha_i(u) \tilde \nu_{u,i}(dw)1_{[0,t]}(u) du,$$
where $\nu_{u,i}$ is defined above \eqref{eqn:def-mu-gen} and $\tilde \nu_{u,i}$
 is defined similarly on replacing $\gamma_i$ with $\tilde \gamma_i$.  We will show that
\begin{equation}\label{eq:eq707}   \mbox{ for all }  1\le k \le K  \mbox{ and } s \in [0,T],
 \tilde \nu_{s,k} \ll \nu_{s,k} \mbox{ and } \frac{d\tilde \nu_{s,k}}{d\nu_{s,k}}(w) = L_s^t(w),
\end{equation}
where
\begin{equation}
L_s^t : =\Pi_{j \ge 1} \left(\frac{\tilde \gamma_{\Delta(\tau_j^s)}(\tau_j^s)}{\gamma_{\Delta(\tau_j^s)}(\tau_j^s)}
%\ind_{\gamma_{\Delta(\tau_j)}(\tau_j) > 0}
\ind_{\{\tau_j^s \le t\}}\right) \times \exp \left\{ -\int_s^t w(u) \left[ \sum_{i=1}^K \tilde \gamma_i(u)- \sum_{i=1}^K \gamma_i(u) \right ]du \right\}.
\end{equation}
The lemma is an immediate consequence of \eqref{eq:eq707} on observing that
$\mu^i_t$ and $\mu^j_t$ are mutually singular when $i \neq j$, and the relation
$\mu_t = \sum_{i=1}^K \mu^i_t$, $\tilde \mu_t = \sum_{i=1}^K \tilde \mu^i_t$.

We now show \eqref{eq:eq707}.  From the construction of $\nu_{s,k}$ it follows that, there are counting processes $\{N_i(u)\}_{u \in [s,t]}$, $i = 1, \cdots K$, on $\WW$
such that
\be
w(u) = w(s) + \sum_{i=1}^K i N_i(u), \mbox{ for } u \in [s, t], \; a.s.\; \nu_{s,k} \label{eqn:def-counting-jumps}
\ee
and 
\be
M_i(u) := N_i(u) - \int_s^u w(r)\gamma_i(r) dr \;\; \mbox{ under $\nu_{s,k}$ is a } \{\clg_u\}_{u \in [s,t]} \mbox{ local martingale for } \;\; u \in [s,t].  \label{eqn:def-martingale-m}
\ee
% Note that, for $ t\in [0, T]$,  $\mu_t (ds\; dw) = \nu_{s,k}(dw) \alpha_i(s) ds$.  Denote $\tilde \nu_{s,k}$ for the probability law on $\WW$ defined with rate function $\tilde \gamma$. Define $\{L_s^t\}_{t \in [s,T]}$ as
% 
% Then it suffices to show that,  
From standard results it follows that  $L_s^t$ is a local-martingale and super-martingale (see Theorem VI.T2, p.165 of \cite{bremaud1981}).
In order to show \eqref{eq:eq707}, it suffices to show that $\{L_s^u\}_{u \in [s,t]}$ is a martingale. 
By a change of variable formula it follows that (see e.g. Theorem A4.T4, p. 337 of \cite{bremaud1981}) 
\begin{equation}
	L_s^v = 1 + \sum_{i=1}^K \int_s^v L_s^{u-} \cdot \left( \frac{\tilde \gamma_i(u)}{\gamma_i(u)} -1\right)dM_i(u),\; v \in [s,t]. \label{eqn:l-mart-decompose}
\end{equation}
In order to show $L_s^t$ is a martingale, it then suffices, in view of \eqref{eqn:def-martingale-m}, to show that (see e.g. Theorem II.T8 in \cite{bremaud1981}) 
for all $1\le i\le K$,
\[
\int_{\WW} \left[\int_s^t L_s^{u} \cdot |\tilde \gamma_i(u)-\gamma_i(u)| w(u) du \right] d\nu_{s,k}(w) <  \infty .
\]
Finally note that $L_s^{u}  \le e^{ \eps t w(t) } $.
Using Lemma \ref{lemm:yule-domn}(i) and standard estimates for Yule processes, it follows that for $\eps$ sufficiently small
$\sup_{s\in [0,t]} \sup_{1 \le k\le K}\int_{\WW} w(t)e^{ \eps t w(t) }d\nu_{s,k}(w) < \infty$.

The result follows.\qed\\

We will  now use the above lemma to study  the effect of perturbations in $\mvalpha$ on $\rho_t(\mvalpha, \beta, \mvgamma)$.

\begin{Lemma}[Perturbations of $\mvalpha$]
	\label{lemma:disturb-a}
Fix $\eps> 0$.	Let  $(\mvalpha,\beta,\mvgamma)$ be rate functions and let $\mvalpha^\eps = (\alpha_1^{\eps}, \cdots , \alpha_K^{\eps})$, where
$\alpha_i^{\eps}$ are continuous nonnegative bounded functions on $[0,\infty)$ such that for all $1 \le i \le K$ and $s \in [0,T]$
	$$ \alpha_i(s)-\eps \le  \alpha^\eps_i(s) \le \alpha_i(s). $$
	Then for every $t >0$,
	$$ |\rho_t(\mvalpha,\beta,\mvgamma)-\rho_t(\mvalpha^\eps,\beta,\mvgamma)| \le C \sqrt{\eps},$$
	where $C=t \|\beta\|_\infty \cdot 6K^2e^{2t K\|\mvgamma\|_\infty} \cdot 4tK\sqrt{\|\mvalpha\|_\infty}$.
\end{Lemma}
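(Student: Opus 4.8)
The plan is to realize the perturbation $\mvalpha\mapsto\mvalpha^\eps$ as a change of the type measure on $\XX$, and then exploit the invariance of operator norms under a suitable reweighting of the kernel (Lemma~\ref{lemma:change-of-measure}), thereby reducing the assertion to an $L^2$-estimate on a kernel difference via Lemma~\ref{lemma:basic-operator-norm}(a).

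First I would observe that, since only $\mvalpha$ is perturbed while $\mvgamma$ is left unchanged, the laws $\nu_{u,i}$ of \eqref{eqn:nu-si-defn} are identical for $(\mvalpha,\beta,\mvgamma)$ and $(\mvalpha^\eps,\beta,\mvgamma)$. Consequently, writing $\mu_t$ and $\mu_t^\eps$ for the two type measures, one has $\mu_t^\eps\ll\mu_t$ with
\[
g(\bfx)\ :=\ \frac{d\mu_t^\eps}{d\mu_t}(s,w)\ =\ \frac{\alpha^\eps_{\Delta(s)-K}(s)}{\alpha_{\Delta(s)-K}(s)}\ \in\ [0,1]
\]
(this is the special case $\tilde\mvgamma=\mvgamma$ of Lemma~\ref{lemma:rn-derivative}; the convention $0/0=1$ takes care of the null set where $\alpha_i$ vanishes, and here in fact absolute continuity holds for every $\eps>0$, no smallness being needed). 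Since the kernel $\kappa_t$ of \eqref{eqn:def-kappa-gen} has the same functional form for both rate functions, Lemma~\ref{lemma:change-of-measure} applied with $\tilde\mu=\mu_t^\eps$, $\tilde\kappa=\kappa_t$ and $\kappa_t^g(\bfx,\bfy):=\sqrt{g(\bfx)g(\bfy)}\,\kappa_t(\bfx,\bfy)$ gives $\rho_t(\mvalpha^\eps,\beta,\mvgamma)=\|\KK_t^g\|_{L^2(\mu_t)}$, where $\KK_t^g$ is the integral operator associated with $(\kappa_t^g,\mu_t)$. Because $\kappa_t^g\le\kappa_t$, the triangle inequality for operator norms together with Lemma~\ref{lemma:basic-operator-norm}(a) applied to the nonnegative kernel $\kappa_t-\kappa_t^g$ yields
\[
|\rho_t(\mvalpha,\beta,\mvgamma)-\rho_t(\mvalpha^\eps,\beta,\mvgamma)|\ \le\ \|\KK_t-\KK_t^g\|\ \le\ \left(\int_{\XX\times\XX}\bigl(1-\sqrt{g(\bfx)g(\bfy)}\bigr)^2\,\kappa_t(\bfx,\bfy)^2\,\mu_t(d\bfx)\,\mu_t(d\bfy)\right)^{1/2}.
\]

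It then remains to bound the double integral. Using $1-\sqrt{ab}\le(1-a)+(1-b)$ for $a,b\in[0,1]$ one gets $(1-\sqrt{g(\bfx)g(\bfy)})^2\le 2(1-g(\bfx))^2+2(1-g(\bfy))^2$, so by the symmetry of $\kappa_t$ and of $\mu_t\otimes\mu_t$ the double integral is at most $4\int\int(1-g(\bfx))^2\kappa_t(\bfx,\bfy)^2\,\mu_t(d\bfx)\,\mu_t(d\bfy)$. Since the paths $w$ are nondecreasing (the generator $\calA$ of \eqref{eqn:attachment-operator} has only positive jumps), $\kappa_t(\bfx,\bfy)\le\|\beta\|_\infty\,t\,w(t)\tilde w(t)$, which factorizes the integral into $\|\beta\|_\infty^2t^2\bigl(\int_\XX(1-g)^2w(t)^2\,d\mu_t\bigr)\bigl(\int_\XX\tilde w(t)^2\,d\mu_t\bigr)$. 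The second factor is at most $\sum_{i=1}^K\int_0^t\alpha_i(u)\bigl(\int_\WW w(t)^2\,\nu_{u,i}(dw)\bigr)du\le 6K^2e^{2tK\|\mvgamma\|_\infty}\,t\|\mvalpha\|_\infty$ by Lemma~\ref{lemm:yule-domn}(ii); for the first factor, on the $i$-th constituent of $\mu_t$ one has $1-g=(\alpha_i-\alpha^\eps_i)/\alpha_i$, and the elementary inequality $(\alpha_i(u)-\alpha^\eps_i(u))^2/\alpha_i(u)\le\eps$ — valid because $0\le\alpha_i-\alpha^\eps_i\le\min\{\eps,\alpha_i\}$ — gives, again via Lemma~\ref{lemm:yule-domn}(ii), $\int_\XX(1-g)^2w(t)^2\,d\mu_t\le 6K^3t\,\eps\,e^{2tK\|\mvgamma\|_\infty}$. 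Collecting the factors and taking a square root produces the claimed bound, with a constant no larger than the stated $C$ (indeed $12K^{5/2}\le 24K^3$ for $K\ge 1$).

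The hard part — and the reason one gets $\sqrt\eps$ rather than $\eps$ — is exactly this last estimate: the density $g$ is close to $1$ only \emph{relative to} $\alpha_i$, so $1-g$ need not be uniformly small, and no pointwise control on the kernel difference is available. One must instead exploit the quadratic structure of the $L^2$-norm, via $(\alpha_i-\alpha^\eps_i)^2\le\eps\,\alpha_i$, to keep the bound finite and of the correct order. Everything else is routine bookkeeping: the change-of-measure identity is a direct citation of Lemma~\ref{lemma:change-of-measure}, and the second-moment control of the path values is Lemma~\ref{lemm:yule-domn}(ii).
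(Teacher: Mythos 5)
Your proof is correct and follows essentially the same strategy as the paper: realize the perturbation through the Radon--Nikodym density $g = d\mu_t^\eps/d\mu_t$ via Lemma~\ref{lemma:rn-derivative}, invoke Lemma~\ref{lemma:change-of-measure} to place both operators on the common space $L^2(\mu_t)$, reduce to an $L^2$ bound on the kernel difference via Lemma~\ref{lemma:basic-operator-norm}, use the pointwise bound $\kappa_t(\bfx,\bfy) \le t\|\beta\|_\infty w(t)\tilde w(t)$ together with the Yule second-moment estimate of Lemma~\ref{lemm:yule-domn}(ii), and finally exploit the quadratic structure to extract $\sqrt{\eps}$. The only place you diverge from the paper is the final pointwise manipulation of the density difference: the paper rewrites $\bigl(\sqrt{g(\bfx)g(\bfy)}-1\bigr)^2 \alpha_i(s)\alpha_j(u)$ as $\bigl(\sqrt{\alpha_i^\eps(s)\alpha_j^\eps(u)}-\sqrt{\alpha_i(s)\alpha_j(u)}\bigr)^2$ and controls it through the elementary inequality $|\sqrt{a}-\sqrt{a'}|\le\sqrt{|a-a'|}$, whereas you decouple $(1-\sqrt{g(\bfx)g(\bfy)})^2\le 2(1-g(\bfx))^2+2(1-g(\bfy))^2$ and then use $(\alpha_i-\alpha_i^\eps)^2/\alpha_i\le\eps$. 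Both of these isolate the same key cancellation and yield constants of the same order in $K$, $t$, $\|\beta\|_\infty$, $\|\mvalpha\|_\infty$, $\|\mvgamma\|_\infty$; yours is if anything slightly cleaner because it factorizes the double integral. Your remark that absolute continuity requires no smallness of $\eps$ when only $\mvalpha$ is perturbed is also accurate and worth noting, since Lemma~\ref{lemma:rn-derivative} as stated carries the $\eps<\eps_0$ hypothesis only because of the $\mvgamma$-perturbation.
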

{\bf Proof:} 
Let $(\mu_t,\kappa_t)$ be the type measure and kernel associated with $(\mvalpha,\beta,\mvgamma)$.  Also, let
$\mu^{\eps}_t$ be the type measure associated with $(\mvalpha^{\eps},\beta,\mvgamma)$.
By Lemma \ref{lemma:rn-derivative}, 
$$g(s,w):= \frac{d\mu_t^\eps}{d\mu_t}(s,w)=\frac{ \alpha^\eps_{\Delta(s)-K}(s)}{\alpha_{\Delta(s)-K}(s)} \mbox{ for } (s,w) \in [0,t]\times \WW. $$ 
Using Lemma \ref{lemma:basic-operator-norm} (c), (a), Lemma \ref{lemma:change-of-measure}, and the fact that $|\kappa_t(\bfx,\bfy)| \le t\|\beta\|_\infty  w(t)\tilde w(t)$,
$\mu_t\otimes \mu_t$ a.e.   $(\bfx, \bfy) =((s,w), (\tilde s, \tilde w))$,  we have
\begin{align}
	|\rho_t(\mvalpha,\beta,\mvgamma)-\rho_t(\mvalpha^\eps,\beta,\mvgamma)| 
	\le&  \left( \int_{\XX \times \XX} |\sqrt{g(\bfx)g(\bfy)}-1|^2|\kappa_t(\bfx,\bfy)|^2 d\mu_t(\bfx)d\mu_t(\bfy) \right)^{1/2} \nonumber\\
	\le&  t \|\beta\|_\infty \left( \int_{\XX \times \XX} |\sqrt{g(\bfx)g(\bfy)}-1|^2 w^2(t) \tilde w^2(t) d\mu_t(\bfx)d\mu_t(\bfy) \right)^{1/2} \nonumber\\
	\le&  t \|\beta\|_\infty d_1 \left( \sum_{i,j=1}^K  \int_{[0,t]^2} \left(\sqrt{\frac{\alpha_i^\eps(s)  \alpha_j^\eps(u)}{\alpha_i(s) \alpha_j(u)}}-1\right)^2 \alpha_i(s)\alpha_j(u) dsdu \right)^{1/2}, \label{eqn:651}
\end{align}
where $$d_1 = \sup_{s \in [0,T]}\sup_{1 \le i \le K} \int_{\WW} |w(t)|^2 \nu_{s,i}(dw) \le 6K^2 e^{2tK\|\mvgamma\|_\infty},$$
and the last inequality follows from \eqref{eqn:bound-yule}.
%The last inequality uses the definition of $\mu_t$ in \eqref{eqn:def-mu-gen} and the fact that $\E_{s,i}[w^2(t)] \le \E[Z^2(t)]$ as in \eqref{eqn:bound-yule}. 
In order to bound \eqref{eqn:651}, note that:
\begin{align*}
	\biggl|\sqrt{\alpha_i(s) \alpha_j(u)}-\sqrt{\alpha_i^\eps(s)  \alpha_j^\eps(u)}\biggr|
	=& \biggl|\sqrt{\alpha_i(s)}\left(\sqrt{\alpha_j(u)}-\sqrt{\alpha_j^\eps(u)}\right)+ \left(\sqrt{\alpha_i(s)}-\sqrt{\alpha_i^\eps(s)}\right)\sqrt{\alpha_j^\eps(u)}\biggr|\\
	\le& 2\sqrt{\eps}\left(\sqrt{\alpha_i(s)}+\sqrt{\alpha_j(u)} \right).
%	\le 2\sqrt{\eps(\alpha_i(s)+\alpha_j(u))}
\end{align*}
Plugging the above bound in \eqref{eqn:651} gives the desired result.\qed \\

We will now   analyze the effect of perturbations in $\mvgamma$ on $\rho_t(\mvalpha, \beta, \mvgamma)$.  We need the following preliminary truncation lemma.

\begin{Lemma}
	\label{lemma:norm-trunc-kernel}
For every $T > 0$, there exist $C_6, C_7, A_0 \in (0, \infty)$ such that for any $t\in [0,T]$ and rate functions 	 $(\mvalpha,\beta,\mvgamma)$ the following holds:
Let $\mu_t$, $\kappa_t$ be the type measure and kernel associated with $(\mvalpha,\beta,\mvgamma)$. Define, for $A \in (0, \infty)$, the kernel $\kappa_{A,t}$ as
	\begin{equation}\label{eq:eq250} \kappa_{A,t}(\bfx,\bfy)=\kappa_t(\bfx,\bfy)\ind_{\{ w(t) \le A, \tilde w(t) \le A \}} \text{ where } \bfx=(s,w), \bfy=(r,\tilde w).\end{equation}
Then  for all $A > A_0$,
	$$  \rho(\kappa_t)- C_6 e^{-C_7 A} \le \rho(\kappa_{A,t}) \le \rho(\kappa_t),$$
	where $\rho(\kappa_t)$ [ resp. $\rho(\kappa_{A,t})$] denotes the norm of the operator associated with $(\kappa_t, \mu_t)$ [ resp. $(\kappa_{A,t}, \mu_t)$].
\end{Lemma}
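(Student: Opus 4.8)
The upper bound is immediate: $\kappa_{A,t}\le\kappa_t$ are nonnegative (symmetric, product measurable) kernels on $(\XX,\mu_t)$, so Lemma~\ref{lemma:basic-operator-norm}(b) gives $\rho(\kappa_{A,t})\le\rho(\kappa_t)$. The content is the lower bound. Writing $\bfx=(s,w)$, $\bfy=(r,\tilde w)$, the plan is to control the ``tail kernel'' $\bar\kappa:=\kappa_t-\kappa_{A,t}=\kappa_t\,\ind_{\{w(t)>A\}\cup\{\tilde w(t)>A\}}\ge0$. By the reverse triangle inequality for operator norms, followed by Lemma~\ref{lemma:basic-operator-norm}(c) and then (a),
\[
\rho(\kappa_t)-\rho(\kappa_{A,t})\;\le\;\|\KK_t-\KK_{A,t}\|\;\le\;\|\bar\KK\|\;\le\;\Big(\int_{\XX\times\XX}\bar\kappa(\bfx,\bfy)^2\,\mu_t(d\bfx)\mu_t(d\bfy)\Big)^{1/2},
\]
where $\KK_t,\KK_{A,t},\bar\KK$ are the operators associated with $(\kappa_t,\mu_t)$, $(\kappa_{A,t},\mu_t)$, $(\bar\kappa,\mu_t)$. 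So it suffices to bound the last integral by $C\,e^{-cA}$ with $C,c$ depending only on $T$ and $K$.

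For the integral, first note that under $\nu_{u,i}$ the path $w$ is nondecreasing, since the generator \eqref{eqn:attachment-operator} has only positive jumps; hence from \eqref{eqn:def-kappa-gen}, together with $\|\beta\|_\infty<1$ and $t\le T$, one gets the pointwise bound $\kappa_t(\bfx,\bfy)=\int_0^t w(u)\tilde w(u)\beta(u)\,du\le T\,w(t)\tilde w(t)$. Therefore $\bar\kappa(\bfx,\bfy)^2\le T^2 w(t)^2\tilde w(t)^2\big(\ind_{\{w(t)>A\}}+\ind_{\{\tilde w(t)>A\}}\big)$, and by symmetry of the integrand in $(\bfx,\bfy)$ together with Tonelli,
\[
\int_{\XX\times\XX}\bar\kappa^2\,\mu_t\otimes\mu_t\;\le\;2T^2\Big(\int_\XX w(t)^2\ind_{\{w(t)>A\}}\,\mu_t(d\bfx)\Big)\Big(\int_\XX w(t)^2\,\mu_t(d\bfx)\Big).
\]
Using the representation \eqref{eqn:def-mu-gen}, the bound $\int_\WW w(t)^2\,\nu_{u,i}(dw)\le 6K^2 e^{2TK}$ from Lemma~\ref{lemm:yule-domn}(ii) (valid for $u\le t\le T$ since $\|\mvgamma\|_\infty<1$), and $\sum_{i=1}^K\int_0^t\alpha_i(u)\,du\le\|\mvalpha\|_\infty T\le T$, the second factor is at most $6K^2 Te^{2TK}$, uniformly in $t\le T$ and in the rate functions. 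The first factor is at most $T\cdot\sup_{u\le t,\,1\le i\le K}\int_\WW w(t)^2\ind_{\{w(t)>A\}}\,\nu_{u,i}(dw)$, so everything reduces to a uniform exponential tail estimate for this last quantity.

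That estimate comes from Lemma~\ref{lemm:yule-domn}(i): under $\nu_{u,i}$, $w(t)$ is stochastically dominated by $K(X_1+X_2)$ with $X_1,X_2$ i.i.d.\ geometric of parameter $p=e^{-tK\|\mvgamma\|_\infty}\ge e^{-TK}$, i.e.\ with failure probability $1-p\le q:=1-e^{-TK}<1$. Fixing $\theta=\theta(T,K)>0$ small enough that $e^{2\theta}q<1$, the moments $\E[(X_1+X_2)^2 e^{\theta(X_1+X_2)}]$ are bounded by a constant $c_0(T,K)$ (use $x^2\le C_\theta e^{\theta x}$ and the exponential moment at parameter $2\theta$), whence
\[
\int_\WW w(t)^2\ind_{\{w(t)>A\}}\,\nu_{u,i}(dw)\;\le\;K^2\,\E\big[(X_1+X_2)^2\ind_{\{K(X_1+X_2)>A\}}\big]\;\le\;K^2 e^{-\theta A/K}\,c_0(T,K).
\]
Collecting constants gives the integral bound $C_6^2 e^{-2C_7 A}$ with $C_6,C_7$ depending only on $T$ and $K$, and any fixed $A_0>0$ works. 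The argument is routine once Lemmas~\ref{lemma:basic-operator-norm} and \ref{lemm:yule-domn} are available; the only point needing genuine care is the bookkeeping that ensures every constant produced is controlled by $T$ and $K$ alone, with no residual dependence on the particular rate functions beyond the standing normalization $\max\{\|\mvalpha\|_\infty,\|\beta\|_\infty,\|\mvgamma\|_\infty\}<1$.
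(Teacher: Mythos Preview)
Your proof is correct and follows essentially the same route as the paper: reduce $\rho(\kappa_t)-\rho(\kappa_{A,t})$ to the $L^2$ norm of the tail kernel via Lemma~\ref{lemma:basic-operator-norm}, bound $\kappa_t$ pointwise by $T\,w(t)\tilde w(t)$, factorize using symmetry, and control the truncated second moment through the Yule domination of Lemma~\ref{lemm:yule-domn}. The only cosmetic difference is in the final tail estimate for $\E[(X_1+X_2)^2\ind_{\{X_1+X_2>A/K\}}]$: the paper splits into the cases $X_1\ge X_2$ and $X_1<X_2$ and computes directly for a single geometric variable, whereas you use a Chernoff-type bound via an exponential moment at a small parameter $\theta$; both are standard and yield the same exponential decay in $A$ with constants depending only on $T$ and $K$.
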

\noindent {\bf Proof:} 
The upper bound in the lemma is immediate from
Lemma \ref{lemma:basic-operator-norm} (b).  We now consider the lower bound.
 For the rest of the proof, we suppress the dependence of $\kappa_t, \kappa_{A,t}, \mu_t$ on $t$.  Note that, from Lemma \ref{lemma:basic-operator-norm} (a,c)
\begin{align}
	\rho(\kappa)-\rho(\kappa_A)
	\le& \left(\int_{\XX\times\XX} (\kappa(\bfx,\bfy)-\kappa_A(\bfx,\bfy))^2 d\mu(\bfx) d\mu(\bfy)\right)^{1/2}\nonumber\\
	\le& 2\left( \int_{\XX\times \XX} (t \|\beta\|_\infty w(t) \tilde w(t)\ind{\{ \tilde w(t) > A\}})^2 d\mu(\bfx) d\mu(\bfy)\right)^{1/2}\nonumber\\
	\le& 2 t \|\beta\|_\infty \left( d_1\sum_{i=1}^K \sum_{j=1}^K\int_{[0,t]\times [0,t]} \alpha_i(s) \alpha_j(u) dsdu  \right)^{1/2}\nonumber\\
	\le& 2 t \|\beta\|_\infty \cdot t \|\mvalpha\|_\infty \cdot \sqrt{d_1}, \label{eq:eq310}
\end{align}
where 
\begin{equation}\label{eq:eq311}
d_1 = \int_{\WW} [w(t)]^2 \nu_{s,i}(dw) \int_{\WW} [ w(t)]^2\ind_{\{ w(t) > A\}} \nu_{s,i}(dw).\end{equation}
By \eqref{eqn:wt-bd-2-yule}, $w(t) \leq_d K(X_1+ X_2)$ where $X_1, X_2$ are independent and identically distributed with Geometric $p=e^{-tK\|\mvgamma\|_\infty}$ distribution.   
\begin{align*}
	\int_{\WW} [ w(t)]^2\ind_{\{ w(t) > A\}} \nu_{s,i}(dw) 
	\le& K^2 \E\left[ (X_1+X_2)^2 \ind_{\set{X_1+X_2> A/K}} \right]\\
	=&  K^2 \E \left[(X_1+ X_2)^2 (\ind_{\set{X_1+X_2 > C, X_1\geq X_2}} + \ind_{\set{X_1+X_2 > C, X_1 <  X_2}})\right]\\
	\le& 4 K^2 \E\left[ X_1^2\ind_{\set{X_1> A/2K}}+ X_2^2\ind_{\set{X_2> A/2K}} \right],
\end{align*}
The above quantity can be bounded by
$$ d_2 (1 - e^{-2TK\|\mvgamma\|_\infty} )^{A/2K} \le d_2 \exp\set{  -\frac{e^{-2TK\|\mvgamma\|_\infty}}{K} A} $$
for some constant $d_2$.
%$$16 \frac{\exp(-(2K)^{-1}A\exp(-T\|\mvgamma\|_\infty))}{1-\exp(-(2K)^{-1}A\exp(-T\|\mvgamma\|_\infty))}.$$
The result now follows on using the above bound and \eqref{eqn:bound-yule} in \eqref{eq:eq311} and \eqref{eq:eq310}.
\qed 

% Let us finally analyze the effect of perturbations of $\mvgamma$. 

\begin{Lemma}[Perturbations of $\mvgamma$]
	\label{lemma:disturb-c}
	For every $T>0$, there exists $C_8 \in (0, \infty)$ and  $\eps_0 \in (0,1)$ such that for all $t \in [0,T]$ and rate functions
  $(\mvalpha,\beta,\mvgamma)$ the following holds:
 Suppose $\eps \in (0, \eps_0)$ and $\mvgamma^\eps = (\gamma_1^{\eps}, \cdots \gamma_K^{\eps})$,
where $\gamma_i^{\eps}$ are continuous, nonnegative maps on $[0,T]$ such that for all $1 \le i \le K$  
	$$ \gamma_i(s) - \eps \le \ \gamma^\eps_i(s) \le \gamma_i(s), \mbox{ for all } s \in [0,T].$$
	Then 
	%there exist  $C_8 \in (0, \infty)$ and  $\eps_0 \in (0,1)$  such that for all $t \in [0,T]$, and $\eps \in (0, \eps_0)$,
	$$ |\rho_t(\mvalpha,\beta,\mvgamma)-\rho_t(\mvalpha,\beta,\mvgamma^\eps)| \le C_8 \sqrt{\eps} \cdot (-\log \eps)^2.$$
\end{Lemma}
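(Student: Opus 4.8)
Proof of Lemma~\ref{lemma:disturb-c}: plan. The idea is to reduce the perturbation of $\mvgamma$ to a change--of--measure estimate, using the truncation Lemma~\ref{lemma:norm-trunc-kernel} to strip off the polynomially growing weight $w(t)$ and the Radon--Nikodym formula of Lemma~\ref{lemma:rn-derivative} to compare the two type measures on a common space. Since only $\mvgamma$ is perturbed, $\beta$ is unchanged, so the kernel $\kappa_t$ of \eqref{eqn:def-kappa-gen} is the \emph{same} function for $(\mvalpha,\beta,\mvgamma)$ and $(\mvalpha,\beta,\mvgamma^\eps)$; only the type measures $\mu_t$ and $\mu_t^\eps$ differ.

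First I fix $A=A(\eps):=-(\log\eps)/(2C_7)$, with $C_6,C_7,A_0$ as in Lemma~\ref{lemma:norm-trunc-kernel}, so that $C_6e^{-C_7A}=C_6\sqrt\eps$ and $A>A_0$ for $\eps$ small. Applying Lemma~\ref{lemma:norm-trunc-kernel} both to $(\mvalpha,\beta,\mvgamma)$ on $L^2(\mu_t)$ and to $(\mvalpha,\beta,\mvgamma^\eps)$ on $L^2(\mu_t^\eps)$, with the common truncated kernel $\kappa_{A,t}$ of \eqref{eq:eq250}, gives $|\rho_t(\mvalpha,\beta,\mvgamma)-\rho_t(\mvalpha,\beta,\mvgamma^\eps)|\le 2C_6\sqrt\eps+|\rho(\kappa_{A,t},\mu_t)-\rho(\kappa_{A,t},\mu_t^\eps)|$. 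By Lemma~\ref{lemma:rn-derivative} applied with $\tilde\mvalpha=\mvalpha$ (so the $\mvalpha$-factor equals $1$), $g:=d\mu_t^\eps/d\mu_t$ equals $\mu_t$-a.e. the quantity $L_s^t(w)=\big(\Pi_{j:\tau_j^s\le t}\gamma^\eps_{\Delta(\tau_j^s)}(\tau_j^s)/\gamma_{\Delta(\tau_j^s)}(\tau_j^s)\big)\exp\{\int_s^t w(u)\sum_i(\gamma_i-\gamma_i^\eps)(u)\,du\}$, valid for $\eps$ below a threshold $\eps_0(T)$ which, via the Yule domination of Lemma~\ref{lemm:yule-domn}(i), may be taken uniform over rate functions. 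Then Lemma~\ref{lemma:change-of-measure} identifies $\rho(\kappa_{A,t},\mu_t^\eps)$ with the norm on $L^2(\mu_t)$ of the operator associated with $\kappa_{A,t}'(\bfx,\bfy):=\sqrt{g(\bfx)g(\bfy)}\,\kappa_{A,t}(\bfx,\bfy)$, and Lemma~\ref{lemma:basic-operator-norm}(a),(c) bounds $|\rho(\kappa_{A,t},\mu_t)-\rho(\kappa_{A,t}',\mu_t)|$ by $\big(\int_{\XX\times\XX}\kappa_{A,t}^2(1-\sqrt{g(\bfx)g(\bfy)})^2\,d\mu_t\,d\mu_t\big)^{1/2}$.

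On the support of $\kappa_{A,t}$ one has $w(t),\tilde w(t)\le A$, hence $\kappa_{A,t}^2\le(t\|\beta\|_\infty)^2A^4$; using $(1-\sqrt{ab})^2\le 2(1-\sqrt a)^2+2a(1-\sqrt b)^2$, together with $\mu_t(\XX),\mu_t^\eps(\XX)\le T$ and $\int_{\WW}(1-\sqrt{L_s^t})^2\,d\nu_{s,i}=2\big(1-\E^{\nu_{s,i}}[\sqrt{L_s^t}]\big)$, the integral above is at most $C(T)A^4\,\sup_{s\le t,\,1\le i\le K}\big(1-\E^{\nu_{s,i}}[\sqrt{L_s^t}]\big)$. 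The crux is thus the Hellinger--type estimate $1-\E^{\nu_{s,i}}[\sqrt{L_s^t}]\le C(T)\eps$. To get it I apply the change--of--variables formula (Brémaud A4.T4) to $\sqrt{L_s^v}$ rather than to $L_s^v$, exactly as in the proof of Lemma~\ref{lemma:rn-derivative}: between jumps $\tfrac{d}{dv}\sqrt{L_s^v}=\tfrac12 w(v)\big(\sum_i(\gamma_i-\gamma_i^\eps)(v)\big)\sqrt{L_s^v}$, while at a size-$l$ jump $\sqrt{L_s^v}$ is multiplied by $\sqrt{\gamma_l^\eps(v)/\gamma_l(v)}$; compensating the jump part as in \eqref{eqn:def-martingale-m} yields $d\sqrt{L_s^v}=-\sqrt{L_s^{v-}}\,w(v)\,\tfrac12\sum_l(\sqrt{\gamma_l(v)}-\sqrt{\gamma_l^\eps(v)})^2\,dv+dM(v)$ with $M$ a local martingale, so (integrability being handled as in Lemma~\ref{lemma:rn-derivative}) $1-\E^{\nu_{s,i}}[\sqrt{L_s^t}]=\tfrac12\E\big[\int_s^t\sqrt{L_s^{u-}}\,w(u)\sum_l(\sqrt{\gamma_l(u)}-\sqrt{\gamma_l^\eps(u)})^2\,du\big]$. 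Now the key algebraic point: for $0\le b\le a$ with $a-b\le\eps$ one has $(\sqrt a-\sqrt b)^2=(a-b)^2/(\sqrt a+\sqrt b)^2\le (a-b)^2/a=(a-b)\cdot\tfrac{a-b}{a}\le a-b\le\eps$, so $(\sqrt{\gamma_l(u)}-\sqrt{\gamma_l^\eps(u)})^2\le\eps$ \emph{uniformly, regardless of how small $\gamma_l(u)$ is} (this is the regime where $\gamma_l^\eps$ may vanish). Bounding $\sqrt{L_s^{u-}}\le e^{\eps t w(t)/2}$ and $\int_s^t w(u)\,du\le t\,w(t)$ (as $w$ is non-decreasing), and invoking the geometric--moment bound behind Lemma~\ref{lemm:yule-domn}(ii) to get $\sup_{s\le t,\,i}\E^{\nu_{s,i}}[w(t)e^{\eps t w(t)/2}]\le C(T)$ for $\eps<\eps_0(T)$, yields $1-\E^{\nu_{s,i}}[\sqrt{L_s^t}]\le\tfrac12 K\eps\cdot t\cdot C(T)=C'(T)\eps$. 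Combining everything, $\big(\int\kappa_{A,t}^2(1-\sqrt{g(\bfx)g(\bfy)})^2\big)^{1/2}\le C''(T)A^2\sqrt\eps=C''(T)(2C_7)^{-2}(-\log\eps)^2\sqrt\eps$, and adding the $2C_6\sqrt\eps$ from the truncation step gives the assertion with a suitable $C_8$ and $\eps_0$ (enlarging $C_8$ makes the bound trivial for $\eps\ge\eps_0$).

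The step I expect to be the main obstacle is obtaining the \emph{correct power of $\eps$}: a direct estimate $\|\kappa_t-\kappa_t'\|_{2,\mu_t}^2\lesssim\int w(t)^2|g-1|\,d\mu_t$ through Cauchy--Schwarz only gives $O(\sqrt\eps)$, hence $O(\eps^{1/4})$ for the norm difference, which is too weak. Two ingredients are needed and both are somewhat delicate: (i) the truncation, which replaces the unbounded weight $w(t)^2$ by the polylogarithmic constant $A^4$ at negligible cost, reducing matters to the \emph{unweighted} quantity $\int(1-\sqrt g)^2\,d\mu_t$; and (ii) the identification of that unweighted quantity with $2(1-\E[\sqrt{L_s^t}])$, a Hellinger affinity that is genuinely $O(\eps)$ rather than $O(\sqrt\eps)$, together with the inequality $(\sqrt a-\sqrt b)^2\le a-b$ which keeps the Hellinger integrand uniformly $O(\eps)$ in the degenerate regime $\gamma_l\approx\eps$. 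The remaining bookkeeping — the martingale/integrability justification of the change--of--variables identity and the geometric--moment estimates — is routine given Lemmas~\ref{lemma:rn-derivative} and \ref{lemm:yule-domn}.
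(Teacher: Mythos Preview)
Your proposal is correct and follows essentially the same route as the paper: truncate via Lemma~\ref{lemma:norm-trunc-kernel} with $A\sim -\log\eps$, transfer to a single measure via Lemma~\ref{lemma:change-of-measure}, and reduce to the Hellinger-type quantity $1-\E^{\nu_{s,i}}[\sqrt{L_s^t}]$, which is then shown to be $O(\eps)$ by It\^o's formula for $\sqrt{L_s^u}$ together with $(\sqrt{\gamma_l}-\sqrt{\gamma_l^\eps})^2\le\gamma_l-\gamma_l^\eps\le\eps$.

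Two small technical choices differ. First, to pass from the product $(1-\sqrt{g(\bfx)g(\bfy)})^2$ to the single-variable quantity the paper uses the identity $\int\!\!\int(1-\sqrt{L_s^tL_u^t})^2=2-2\,\E[\sqrt{L_s^t}]\E[\sqrt{L_u^t}]$ and the elementary bound $a_1a_2\ge a_1+a_2-1$ for $a_1,a_2\in[0,1]$, whereas you use $(1-\sqrt{ab})^2\le 2(1-\sqrt a)^2+2a(1-\sqrt b)^2$; both land in the same place. Second, to control $\E[\sqrt{L_s^u}\,w(u)]$ the paper applies Cauchy--Schwarz and the martingale property $\E[L_s^u]=1$, giving directly $\E[\sqrt{L_s^u}\,w(u)]\le(\E[w^2(u)])^{1/2}$; your route via $\sqrt{L_s^u}\le e^{\eps t w(t)/2}$ and an exponential moment of $w(t)$ also works (the geometric domination from Lemma~\ref{lemm:yule-domn} supplies the needed moment for $\eps$ small), but the Cauchy--Schwarz argument is cleaner and avoids having to tune $\eps_0$ against the geometric MGF.
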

{\bf Proof:} 
Let $(\mu_t, \kappa_t)$ [resp. $(\mu_t^{\eps}, \kappa_t^{\eps})$] be the type measure and kernel associated with $(\mvalpha,\beta,\mvgamma)$ [resp. $(\mvalpha,\beta,\mvgamma^{\eps})$].
By Lemma \ref{lemma:rn-derivative}, for $(s,w) \in [0,t]\times \WW$
\begin{equation*}
	\frac{d\mu_t^\eps}{d\mu_t} (s,w) = \Pi_{j \ge 1} \left(\frac{ \gamma^\eps_{\Delta(\tau_j^s)}(\tau_j^s)}{\gamma_{\Delta(\tau_j^s)}(\tau_j^s)}\ind_{\{\tau_j^s \le t\}}\right) \times \exp \left\{ -\int_s^t w(u) \left[ \sum_{i=1}^K  \gamma^\eps_i(u)- \sum_{i=1}^K \gamma_i(u) \right ]du \right\}.
\end{equation*}
Denote the right side as $L_s^t(w)$.  Then, as in the proof of Lemma \ref{lemma:rn-derivative}, it follows that
 $\{L_s^u(w)\}_{u \in [s,t]}$ is a $\{\clg_u\}_{u \in [s,t]}$ martingale under $\nu_{s,k}$ for every $k = 1, \cdots K$.
Fix $A\in (A_0, \infty)$, where $A_0$ is as in Lemma \ref{lemma:norm-trunc-kernel}, and let $\kappa_{A,t}$ be defined by \eqref{eq:eq250}.  Similarly define
$\kappa_{A,t}^{\eps}$  by replacing $\kappa_t$ with $\kappa_t^{\eps}$ in \eqref{eq:eq250}.  Denote the operator norm
of the integral operators associated with $(\kappa_{A,t}, \mu_t)$ and $(\kappa_{A,t}^{\eps}, \mu_t^{\eps})$
by $\rho_{A,t}(\mvalpha,\beta,\mvgamma)$ and $\rho_{A,t}(\mvalpha,\beta,\mvgamma^{\eps})$, respectively.  Then, by Lemma 
 \ref{lemma:change-of-measure} and Lemma \ref{lemma:basic-operator-norm} (a,c), 
\begin{align}
	&|\rho_{A,t}(\mvalpha,\beta,\mvgamma)-\rho_{A,t}(\mvalpha,\beta,\mvgamma^\eps)| \nonumber\\
	\le&   \left( \int_{\XX \times \XX} \left|\sqrt{\frac{d\mu_t^\eps}{d\mu_t} (s,w) \frac{d\mu_t^\eps}{d\mu_t} (u,\tilde w)}-1\right|^2 (\kappa_{A,t}(\bfx,\bfy))^2 d\mu_t(\bfx)d\mu_t(\bfy) \right)^{1/2} \nonumber\\
	\le& tA^2\|\beta\|_\infty \left(  \sum_{i=1}^K \sum_{j=1}^K\int_{[0,t]\times [0,t]} \alpha_i(s)\alpha_j(u) 
	\int_{\WW \times \WW} \left| \sqrt{L_s^t(w)L_u^t(w)} -1 \right|^2 \nu_{s,i}(dw)\nu_{u,j}(d\tilde w)\right)^{1/2}. \label{eq:eq415}
\end{align}	
Next, using the martingale property of $L_s^t$, we have
\begin{align}
	&\int_{\WW \times \WW} \left | \sqrt{L_s^t(w)L_u^t(w)} -1 \right |^2 \nu_{s,i}(dw)\nu_{u,j}(d\tilde w)\nonumber \\
	= & 2 - 2	\int_{\WW} \sqrt{L_s^t(w)}\nu_{s,i}(dw) \int_{\WW} \sqrt{L_u^t(w)}\nu_{u,j}(dw) \nonumber\\
	\le & 4- 2\int_{\WW} \sqrt{L_s^t(w)}\nu_{s,i}(dw) - 2\int_{\WW} \sqrt{L_u^t(w)}\nu_{u,j}(dw), \label{eq:eq344}
	\end{align}	
where the inequality on the last line follows on observing that from Jensen's inequality the two integrals on the 	second line are bounded by $1$ and using the elementary
inequality $a_1+a_2 \le a_1a_2+1$, for $a_1,a_2 \in [0,1]$.
	We will now estimate the two integrals on the last line of \eqref{eq:eq344} by using the martingale properties of $\{L_s^u\}_{u \in [s,t]}$ and the representations \eqref{eqn:def-counting-jumps} and \eqref{eqn:l-mart-decompose} in the proof of Lemma \ref{lemma:rn-derivative}.
	% 	\left(\E_{s,k}[L_s^t(w)]\tilde \E_{u,j}[L_u^t(\tilde w)] - 2\E_{s,k}[\sqrt {L_s^t(w)}]\tilde \E_{u,j}[\sqrt {L_u^t(\tilde w)}] +1 \right)  ds du \right)^{1/2} \nonumber\\
% 	\le& tA^2\|\beta\|_\infty \left(  \sum_{i=1}^K \sum_{j=1}^K\int_{[0,t]\times [0,t]} \alpha_i(s)\alpha_j(u)\left(2 - 2\E_{s,k}[\sqrt {L_s^t(w)}] \E_{u,j}[\sqrt {L_u^t( w)}] \right)  ds du \right)^{1/2} \nonumber \\
% 	\le& tA^2\|\beta\|_\infty \left(  \sum_{i=1}^K \sum_{j=1}^K\int_{[0,t]\times [0,t]} \alpha_i(s)\alpha_j(u)\left(4 - 2\E_{s,k}[\sqrt {L_s^t(w)}] -2\E_{u,j}[\sqrt {L_u^t( w)}] \right)  ds du \right)^{1/2}, \label{eqn:711}
% \end{align}
% where the second inequality uses the fact that $\kappa_{A,t}(\bfx,\bfy) \le tA^2 \|b\|_\infty$, the third inequality is actually an equality and uses the fact that $\E_{s,k}[L_s^t(w)]=1$, and the last inequality uses the fact that $\E_{s,k}[\sqrt {L_s^t(w)}] \le 1$ since $\sqrt {L_s^t(w)}$ is a super-martingale. 
% 
% The next task is to analyze $\sqrt {L_s^t(w)}$. Recall $N_i(t)$ and $M_i(t)$ as defined in . 
For the rest of the proof we write $L_s^u$  as $L_s(u)$. By an application of Ito's formula, we have that for every $k = 1, \cdots K$, $\nu_{s,k}$ a.s.
\begin{equation}\label{eq:eq405}
\begin{aligned}
	&\sqrt{L_s(t)}-1 
	- \sum_{i=1}^K \int_s^t \frac{\sqrt{L_s(u-)}}{2}\left( \frac{\gamma^\eps_i(u)}{\gamma_i(u)} -1\right)dM_i(u) \\
	=& \sum_{ s < u \le t}\left( \sqrt{L_s(u)}-\sqrt{L_s(u-)} \right) - \sum_{i=1}^K \int_s^t \frac{\sqrt{L_s(u-)}}{2}\left( \frac{\gamma^\eps_i(u)}{\gamma_i(u)} -1\right)dN_i(u)\\
	=& \sum_{i=1}^K \int_s^t \sqrt{L_s(u-)} \left( \sqrt{\frac{\gamma^\eps_i(u)}{\gamma_i(u)}} -1\right)dN_i(u) - \sum_{i=1}^K \int_s^t \frac{\sqrt{L_s(u-)}}{2}\left( \frac{\gamma^\eps_i(u)}{\gamma_i(u)} -1\right)dN_i(u)\\
	=& - \frac{1}{2} \sum_{i=1}^K \int_s^t {\sqrt{L_s(u-)}}\left( \sqrt{\frac{\gamma^\eps_i(u)}{\gamma_i(u)}} -1\right)^2dN_i(u),\\
\end{aligned}
\end{equation}
where the second equality follows on observing that for $u \in (s,t]$
$\sqrt{L_s(u)} = \sum_{i=1}^K \sqrt{L_s(u-)}  \sqrt{\frac{\gamma^\eps_i(u)}{\gamma_i(u)}} \Delta N_i(u)$.

As in the proof of Lemma \ref{lemma:rn-derivative}, we can check that
for all $i,k$,
\[
\int_{\WW} \left[\int_s^t \sqrt{L_s({u}}) \cdot | \gamma_i^{\eps}(u)-\gamma_i(u)| w(u) du \right] d\nu_{s,k}(w) <  \infty,
\]
and consequently the last term on the left side of \eqref{eq:eq405} is a martingale.
Denoting the expectation operator corresponding to the probability measure $\nu_{s,k}$ on $\WW$ by $\E_{s,k}$, we have
\begin{align*}
	1-\E_{s,k}[\sqrt{L_s(t)}] 
	=& \frac{1}{2} \sum_{i=1}^K \E_{s,k} \left[ \int_s^t {\sqrt{L_s(u-)}}\left( \sqrt{\frac{\gamma^\eps_i(u)}{\gamma_i(u)}} -1\right)^2dN_i(u)\right] \\
	=& \frac{1}{2} \sum_{i=1}^K \E_{s,k} \left[ \int_s^t {\sqrt{L_s(u)}}\left( \sqrt{\frac{\gamma^\eps_i(u)}{\gamma_i(u)}} -1\right)^2 w(u) \gamma_i(u) du\right]\\
	=& \frac{1}{2} \sum_{i=1}^K \int_s^t \E_{s,k} \left[ \sqrt{ L_s(u)} w(u)  \left(  \sqrt {\gamma^\eps_i(u)}- \sqrt {\gamma_i(u)} \right)^2 \right] du\\
	\le& \frac{1}{2} \int_s^t K \eps  \cdot \E_{s,k} [ \sqrt{ L_s(u)} w(u) ] du\\
	\le& \frac{K \eps}{2} \int_s^t \left(\E_{s,k} [  L_s(u) ] \E_{s,k} [  w^2(u) ]\right)^{1/2} du\\
	\le& \frac{K \eps}{2} \cdot t \cdot (6K^2 e^{2TK\|\mvgamma\|_\infty})^{1/2},
\end{align*}
where
the last inequality follows from  \eqref{eqn:bound-yule}. 
Using the above bound in \eqref{eq:eq415} we now have
\begin{align*}
	|\rho_{A,t}(\mvalpha,\beta,\mvgamma)-\rho_{A,t}(\mvalpha,\beta,\mvgamma^\eps)|
	 \le & 
	tA^2\|\beta\|_\infty \cdot t\|\mvalpha\|_\infty \cdot \left[2K\eps t (6K^2 e^{2TK||\mvgamma||_{\infty}})^{1/2}\right]^{1/2}.
	%3 t^{1/2} K e^{\frac{1}{2}\|\mvgamma\|_\infty} \eps^{1/2}.
\end{align*}
Finally, by Lemma \ref{lemma:norm-trunc-kernel}, we have
\begin{align*}
	|\rho_{t}(\mvalpha,\beta,\mvgamma)-\rho_{t}(\mvalpha,\beta,\mvgamma^\eps)| \le& |\rho_{A,t}(\mvalpha,\beta,\mvgamma)-\rho_{A,t}(\mvalpha,\beta,\mvgamma^\eps)| + 2 C_6 e^{-C_7 A}\\
	 < & d_1 A^2 \eps^{1/2}+ 2 C_6 e^{-C_7 A},
\end{align*}
where $d_1=tA^2\|\beta\|_\infty \cdot t\|\mvalpha\|_\infty \cdot \left[2K t (6K^2 e^{2TK||\mvgamma||_{\infty}})^{1/2}\right]^{1/2}$. 
  The result now follows on taking $A=- \log \eps$ in the above display and taking  $\eps_0$ sufficiently small (in particular such that $-\log(\eps_0) > A_0$). \qed\\

Now we combine all the above ingredients to complete the proof of Proposition \ref{lemma:error-norm-abc}.\\

\textbf{Proof of Proposition \ref{lemma:error-norm-abc}:} 
% For $\eps$  small, $\|\mvalpha \pm \eps\|_\infty \le 2\|\mvalpha\|_\infty$, $\|\beta \pm \eps\|_\infty \le 2\|\beta\|_\infty$, $\|\mvgamma \pm \eps\|_\infty \le 2\|\mvgamma\|_\infty$. All the constants in the above Lemmas can be replaced by constants depending only on $T,\|\mvalpha\|_\infty,\|\beta\|_\infty,\|\mvgamma\|_\infty$. 
Using Lemma \ref{lemma:disturb-c}, \ref{lemma:disturb-b} and \ref{lemma:disturb-a},   we get 
\begin{align*}
	|\rho_t^+ -\rho_t| 
	\le& |\rho_t(\mvalpha+\eps, \beta+\eps, \mvgamma+\eps)-\rho_t(\mvalpha+\eps, \beta+\eps, \mvgamma)|\\
	&+ |\rho_t(\mvalpha+\eps, \beta+\eps, \mvgamma)-\rho_t(\mvalpha+\eps, \beta, \mvgamma)|+|\rho_t(\mvalpha+\eps, \beta, \mvgamma)-\rho_t(\mvalpha, \beta, \mvgamma)|\\
	\le& C_8 \eps^{1/2}(-\log \eps)^2 + d_1\eps + d_2 \eps^{1/2},
		%\le C \eps^{1/2}(-\log \eps)^2.
\end{align*}
where
$d_1 = 6 T^2 K^3  e^{2TK }$ and $d_2 =   24K^3T^2e^{2TK}$.
	A similar bound holds for $|\rho_t^- -\rho_t|$. The result follows.\qed

\subsection{Effect of time perturbation  on $\rho_t$}
\label{sec:proof-norm-t}
Throughout this section we fix rate functions $(\mvalpha,\beta,\mvgamma)$.
The aim of this section is to understand the evolution of the operator norm $\rho_t(\mvalpha,\beta,\mvgamma)$  as $t$ changes. The main result of the section is Proposition \ref{lemma:error-norm-t} which studies continuity and differentiability properties of the function 
$\rho(t):= \rho_t(\mvalpha, \beta,\mvgamma)$, $t\geq 0$.

% The next lemma studies the effect of change in time $t$, when $(a,b,c)$ are fixed.

\begin{Proposition}
	\label{lemma:error-norm-t}
	Suppose that  $\beta(t)>0$ for $t>0$ and $\liminf_{t \to \infty}\beta(t) > 0$. Then 
	\begin{enumeratei}
		\item $\rho$ is a continuous strictly increasing function on $\Rbold_+$ with $\rho(0) = 0$ and $\lim_{t\to \infty}\rho(t)=\infty$.
		\item % For any fixed $T$ and $t\in (0,T]$ there is constant $C = C(t) > 0$ and $\eps_0(t) > 0$ such that 
		% 		\[ \rho(t_1) \le \rho(t) - C \cdot (t-t_1) \mbox{ when } t_1 \in (t-\eps_0, t)\]
		There is a unique value $t_c^\prime = t_c^\prime(\mvalpha,\beta,\mvgamma)$ such that $\rho(t_c^\prime) = 1$. 
			\end{enumeratei}
\end{Proposition}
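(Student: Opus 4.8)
The plan is to read off all the assertions in (i) from three structural facts about the family $\{(\mu_t,\kappa_t)\}_{t\ge 0}$, and then deduce (ii) from the intermediate value theorem.

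\emph{Structural facts.} First, each $\KK_t$ is Hilbert--Schmidt: by Lemma~\ref{lemm:yule-domn}(ii) and boundedness of $\mvalpha$ one has $\int_{\XX}w(t)^2\,d\mu_t(\bfx)<\infty$, hence $\int\!\int\kappa_t^2\,d\mu_t\,d\mu_t\le (t\|\beta\|_\infty)^2\big(\int w(t)^2\,d\mu_t\big)^2<\infty$; since $\kappa_t$ is symmetric, $\KK_t$ is compact self-adjoint, so $\rho(t)=\|\KK_t\|$ is the top eigenvalue and, using $\kappa_t\ge 0$, equals $\sup\{\langle\KK_t f,f\rangle_{\mu_t}:f\ge0,\ \|f\|_{L^2(\mu_t)}=1\}$ with the maximum attained. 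Second, by \eqref{eqn:def-mu-gen} the restriction of $\mu_t$ to $\{(u,w):u\le s\}$ equals $\mu_s$ for $s<t$, so extension by zero embeds $L^2(\mu_s)$ isometrically into $L^2(\mu_t)$, and for $f$ in this subspace $\langle\KK_t f,f\rangle_{\mu_t}=\langle\KK_t f,f\rangle_{\mu_s}$. Third, on the common support $\kappa_s\le\kappa_t$ pointwise, with $(\kappa_t-\kappa_s)((r,w),(r',\tilde w))=\int_s^t w(u)\tilde w(u)\beta(u)\,du\ge (K+1)^2\int_s^t\beta(u)\,du$, because the paths $w,\tilde w$ are nondecreasing and at least $K+1$ from their birth time on. Facts two and three together with the variational formula give $\rho(s)\le\rho(t)$, and $\rho(0)=0$ since $\mu_0$ is the zero measure.

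\emph{Strict monotonicity and blow-up.} For $0<s_1<s_2$ suppose $\rho(s_1)=\rho(s_2)=:c$. Then $c>0$ (as $\mu_{s_1}\neq 0$ forces $\int\!\int\kappa_{s_1}\,d\mu_{s_1}\,d\mu_{s_1}>0$), so $\KK_{s_1}$ has a nonnegative unit eigenfunction $f^\ast$, $\KK_{s_1}f^\ast=cf^\ast$; the identity $f^\ast=c^{-1}\KK_{s_1}f^\ast$ together with $\kappa_{s_1}>0$ $\mu_{s_1}\!\otimes\!\mu_{s_1}$-a.e.\ (which uses $\beta>0$ on $(0,s_1)$ and non-atomicity of $\mu_{s_1}$ in the time variable) forces $f^\ast>0$ $\mu_{s_1}$-a.e. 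Viewing $f^\ast\in L^2(\mu_{s_2})$,
\[
 c=\rho(s_2)\ \ge\ \langle\KK_{s_2}f^\ast,f^\ast\rangle_{\mu_{s_1}}\ =\ c+\int_{\XX\times\XX}(\kappa_{s_2}-\kappa_{s_1})(\bfx,\bfy)\,f^\ast(\bfx)f^\ast(\bfy)\,d\mu_{s_1}(\bfx)\,d\mu_{s_1}(\bfy),
\]
and the last integral is strictly positive by the third structural fact and $f^\ast>0$ a.e.; contradiction. (In the regime where the proposition is applied $\mvalpha$ has strictly positive coordinates on $(0,\infty)$, so $\mu_t\neq 0$ for $t>0$ and $\rho$ is strictly increasing there; on an initial interval where $\mu_t\equiv 0$ one has $\rho\equiv 0$ trivially.) For $\lim_{t\to\infty}\rho(t)=\infty$ I would take $f=\mathbf{1}_{\{(u,w):\,u\in[a,b]\}}$ for a fixed $[a,b]\subset(0,\infty)$ with $m_0:=\mu([a,b]\times\WW)>0$: then $\|f\|_{L^2(\mu_t)}^2=m_0$ for $t\ge b$ while $\langle\KK_t f,f\rangle\ge m_0^2\int_b^t\beta(u)\,du$, so $\rho(t)\ge m_0\int_b^t\beta(u)\,du\to\infty$ by the hypothesis $\liminf_{t\to\infty}\beta>0$.

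\emph{Continuity.} This is the delicate point. For $h>0$ write $\mu_{t+h}=\mu_t+\delta\mu$ with $\delta\mu$ supported on $\{u\in(t,t+h]\}$ and total mass $\le Kh$, decompose $L^2(\mu_{t+h})=L^2(\mu_t)\oplus L^2(\delta\mu)$, and write $\KK_{t+h}$ in block form $\left(\begin{smallmatrix}A&B\\ B^\ast&C\end{smallmatrix}\right)$, so that $\|\KK_{t+h}\|\le\max(\|A\|,\|C\|)+\|B\|$. Here $A$ is the integral operator on $L^2(\mu_t)$ with kernel $\kappa_{t+h}$, hence by Lemma~\ref{lemma:basic-operator-norm}(a),(c), $\big|\|A\|-\rho(t)\big|\le\|\kappa_{t+h}-\kappa_t\|_{2,\mu_t}$, and since $\kappa_{t+h}-\kappa_t\le h\|\beta\|_\infty\,w(t+h)\tilde w(t+h)$ the right-hand side is $O(h)$ by the second-moment bound of Lemma~\ref{lemm:yule-domn}(ii); the Hilbert--Schmidt norms of $B$ and $C$ are controlled using $\delta\mu(\XX)\le Kh$ together with the same moment bound, giving $\|C\|=O(h)$ and $\|B\|=O(\sqrt h)$. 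Hence $0\le\rho(t+h)-\rho(t)\le O(\sqrt h)$, and running the estimate with $t$ replaced by $t-h$ yields left-continuity; since all the implied constants are locally bounded in $t$, $\rho$ is continuous on $\RRR_+$. Given (i), part (ii) is immediate from the intermediate value theorem applied to the continuous strictly increasing $\rho$ with $\rho(0)=0$ and $\rho(t)\to\infty$. The main obstacle is precisely this continuity step: $\kappa_t$ is unbounded, so sup-norm comparisons are unavailable and must be replaced throughout by $L^2(\mu_t)$ (Hilbert--Schmidt) estimates, while simultaneously the ambient Hilbert space changes with $t$ — the block decomposition is what handles both at once.
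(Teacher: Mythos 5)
Your proof is correct, but it takes a substantially longer route than the paper's, which packs everything into a single two-sided quantitative estimate (Lemma~\ref{lemma:der-norm-t}). The key observation you are missing is a simple pointwise comparison of kernels: since each path $w$ is nondecreasing and the paths in the support of $\mu_{t_1}$ are already alive by time $t_1$, for $0<t_1<t_2$ and $\mu_{t_1}\otimes\mu_{t_1}$-a.e.\ $(\bfx,\bfy)$ one has
\[
\kappa_{t_2}(\bfx,\bfy)=\kappa_{t_1}(\bfx,\bfy)+\int_{t_1}^{t_2}w(u)\tilde w(u)\beta(u)\,du
\ \ge\ \Bigl(1+(t_2-t_1)\,\tfrac{\inf_{[t_1,t_2]}\beta}{t_1\|\beta\|_\infty}\Bigr)\,\kappa_{t_1}(\bfx,\bfy),
\]
using $\int_{t_1}^{t_2}w\tilde w\beta\ge (t_2-t_1)(\inf\beta)\,w(t_1)\tilde w(t_1)$ and $\int_0^{t_1}w\tilde w\beta\le t_1\|\beta\|_\infty w(t_1)\tilde w(t_1)$. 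Together with Lemma~\ref{lemma:basic-operator-norm}(b) this gives a multiplicative lower bound $\rho(t_2)\ge(1+c(t_1,t_2))\rho(t_1)$ immediately, which handles both strict monotonicity and the blow-up in one stroke — no eigenfunction argument and no custom test function needed. Your eigenfunction positivity argument is correct but reproves Perron–Frobenius-type facts that are simply not required. Likewise your block decomposition for continuity is overkill: since $\kappa_{t_1}$ vanishes whenever either argument has birth time exceeding $t_1$, the operator with kernel $\kappa_{t_1}$ on $L^2(\mu_{t_2})$ has exactly the norm $\rho(t_1)$ (this is the same embedding you already noted), so one can compare $\kappa_{t_1}$ and $\kappa_{t_2}$ directly as kernels on $(\XX,\mu_{t_2})$ via Lemma~\ref{lemma:basic-operator-norm}(a),(c) and the second-moment bound of Lemma~\ref{lemm:yule-domn}(ii). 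That yields a Lipschitz bound $|\rho(t_2)-\rho(t_1)|=O(|t_2-t_1|)$, strictly stronger than the $O(\sqrt h)$ you obtain from the block estimate. One thing your write-up does do more carefully than the paper: you flag explicitly that strict monotonicity and blow-up hinge on $\rho(t)>0$ for $t>0$, which requires the rate functions $\mvalpha$ (and $\beta$) to be positive on $(0,\infty)$; the paper leaves that implicit (and in fact silently drops the factor $\rho(t^*)$ in its final display). Your observation is a genuine, if minor, tightening.
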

% \noindent\textbf{Proof:} 
% Note that by construction (Definition \ref{def:rg-general-constr}) for fixed $T> 0$ we can construct all the basic structures $(\cX_t, \mu_t, \kappa_t)$ for $t \in [0,T]$ by using the same space 
% \[\cX:= [0,T]\times \cW:= \cD([0,T]:\NNN)\]
%with the same type measure $\mu:=\mu_T$. The only difference as $t$ evolves over the fixed horizon $[0,T]$ is the kernel $\kappa_t$. 
 % and note that for any $t< T$, it can be easily checked that the norm of $\kappa_t$ on $L^2(\mu_t)$ is the same as the norm of the same kernel but on the space $L^2(\mu_T)$. Thus we can consider a fixed measure space $(\XX, \mu_T)$ and only the kernel $\kappa_t$ depends on $t$. We denote $\rho(t)$ for the operator norm of $\kappa_t$ defined on the space $(\XX,\mu_T)$. 
The proof of the proposition relies on the following lemma and is given after the proof of the lemma.
\begin{Lemma}
	\label{lemma:der-norm-t}
Let $0 < t_1 \le t_2 < \infty$. Then 
\begin{equation*}
	|t_2-t_1| \cdot \frac{\inf_{t_1 \le u \le t_2}\beta(u)}{t_1\|\beta\|_\infty} \cdot \rho(t_1) \le \rho(t_2)-\rho(t_1) \le |t_2-t_1| \cdot 6 t_2 K^2 \|\beta\|_\infty \|\mvalpha\|_\infty e^{2t_2 K\|\mvgamma\|_\infty}.
\end{equation*}
\end{Lemma}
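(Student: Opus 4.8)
The plan is to realize both integral operators on the single Hilbert space $L^2(\XX,\mu_{t_2})$ and then to reduce the estimate to pointwise comparisons of the kernels $\kappa_{t_1}$ and $\kappa_{t_2}$, using the second moment bound from Lemma~\ref{lemm:yule-domn}. Three elementary observations will drive the argument. First, since the laws $\nu_{u,i}$ in \eqref{eqn:def-mu-gen} do not depend on $t$, the measure $\mu_{t_1}$ is exactly the restriction of $\mu_{t_2}$ to $\Lambda:=[0,t_1]\times\WW$; hence the integral operator on $L^2(\XX,\mu_{t_2})$ associated with the kernel $\kappa_{t_1}\ind_{\Lambda\times\Lambda}$ (which annihilates functions supported off $\Lambda$ and does not couple $\Lambda$ with its complement) has operator norm exactly $\rho(t_1)$. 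Second, under each $\nu_{u,i}$ the path $w$ is nondecreasing (by \eqref{eqn:attachment-operator} it has only upward jumps, and it jumps from $0$ to $K+i>0$ at time $u$), so for $\mu_{t_2}$-a.e.\ $\bfx=(s,w)$ one has $w(r)\le w(t_1)$ for $r\le t_1$, $w(r)\ge w(t_1)$ for $r\in[t_1,t_2]$, and $w(r)=0$ for $r<s$. Third, $0\le\kappa_{t_1}\le\kappa_{t_2}$ pointwise on $\XX\times\XX$.

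For the upper bound I would invoke Lemma~\ref{lemma:basic-operator-norm}(a),(c) with the common measure $\mu_{t_2}$ applied to $\kappa_{t_2}$ and $\kappa_{t_1}\ind_{\Lambda\times\Lambda}$, giving
$$ |\rho(t_2)-\rho(t_1)|\le\left(\int_{\XX\times\XX}\bigl(\kappa_{t_2}(\bfx,\bfy)-\kappa_{t_1}(\bfx,\bfy)\ind_{\Lambda\times\Lambda}(\bfx,\bfy)\bigr)^2\,d\mu_{t_2}(\bfx)\,d\mu_{t_2}(\bfy)\right)^{1/2}. $$
The next step is to bound the integrand by $\|\beta\|_\infty^2(t_2-t_1)^2 w(t_2)^2\tilde w(t_2)^2$ for $\mu_{t_2}\otimes\mu_{t_2}$-a.e.\ $((s,w),(r,\tilde w))$: if $s,r\le t_1$ the difference equals $\int_{t_1}^{t_2}w(u)\tilde w(u)\beta(u)\,du$, while otherwise the indicator term vanishes and the difference equals $\kappa_{t_2}(\bfx,\bfy)=\int_{s\vee r}^{t_2}w(u)\tilde w(u)\beta(u)\,du$ with $s\vee r>t_1$ (the integrand being $0$ below $s\vee r$); in both cases the integration range has length at most $t_2-t_1$, on which $w(u)\tilde w(u)\le w(t_2)\tilde w(t_2)$. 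The double integral then factors as $\|\beta\|_\infty^2(t_2-t_1)^2\bigl(\int_\XX w(t_2)^2\,d\mu_{t_2}(s,w)\bigr)^2$, and Lemma~\ref{lemm:yule-domn}(ii) together with $\sum_{i=1}^K\int_0^{t_2}\alpha_i(u)\,du\le t_2\|\mvalpha\|_\infty$ yields $\int_\XX w(t_2)^2\,d\mu_{t_2}(s,w)\le 6t_2K^2\|\mvalpha\|_\infty e^{2t_2K\|\mvgamma\|_\infty}$; taking square roots gives the claimed upper bound.

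For the lower bound I would apply Lemma~\ref{lemma:basic-operator-norm}(b) twice, again over $L^2(\XX,\mu_{t_2})$. Since $\kappa_{t_2}\ge\kappa_{t_2}\ind_{\Lambda\times\Lambda}\ge0$, $\rho(t_2)$ dominates the norm of the operator with kernel $\kappa_{t_2}\ind_{\Lambda\times\Lambda}$. On $\Lambda\times\Lambda$, setting $\theta:=(t_2-t_1)\bigl(\inf_{t_1\le u\le t_2}\beta(u)\bigr)/\bigl(t_1\|\beta\|_\infty\bigr)$, I will use
$$ \kappa_{t_2}(\bfx,\bfy)=\kappa_{t_1}(\bfx,\bfy)+\int_{t_1}^{t_2}w(u)\tilde w(u)\beta(u)\,du\ge\kappa_{t_1}(\bfx,\bfy)+(t_2-t_1)\Bigl(\inf_{t_1\le u\le t_2}\beta(u)\Bigr)w(t_1)\tilde w(t_1)\ge(1+\theta)\kappa_{t_1}(\bfx,\bfy), $$
where the first inequality uses $w(u)\ge w(t_1)$ and $\tilde w(u)\ge\tilde w(t_1)$ on $[t_1,t_2]$, and the last uses $\kappa_{t_1}(\bfx,\bfy)\le t_1\|\beta\|_\infty w(t_1)\tilde w(t_1)$ (valid since $w(u)\le w(t_1)$ on $[0,t_1]$). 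A second application of Lemma~\ref{lemma:basic-operator-norm}(b) and the first observation then give $\rho(t_2)\ge(1+\theta)\rho(t_1)$, i.e.\ $\rho(t_2)-\rho(t_1)\ge\theta\rho(t_1)$.

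I expect the only genuinely delicate point to be the first observation — identifying $\mu_{t_1}$ with the restriction $\mu_{t_2}|_\Lambda$ and thereby representing $\rho(t_1)$ as an operator norm on the common space $L^2(\XX,\mu_{t_2})$ — since this is precisely what allows both parts of Lemma~\ref{lemma:basic-operator-norm} (which are stated for a common finite measure space) to be applied. Everything after that is pointwise kernel algebra using the monotonicity of the sample paths, together with the Yule-process moment estimate of Lemma~\ref{lemm:yule-domn}.
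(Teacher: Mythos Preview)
Your proof is correct and follows essentially the same route as the paper's: both arguments work on $L^2(\XX,\mu_{t_2})$, bound $\kappa_{t_2}-\kappa_{t_1}$ in $L^2$ via the Yule moment estimate for the upper bound, and use the pointwise inequality $\kappa_{t_2}\ge(1+\theta)\kappa_{t_1}$ together with Lemma~\ref{lemma:basic-operator-norm}(b) for the lower bound. The one point you spell out that the paper leaves implicit is the identification $\mu_{t_1}=\mu_{t_2}|_{[0,t_1]\times\WW}$ (equivalently, that $\kappa_{t_1}$ vanishes $\mu_{t_2}\otimes\mu_{t_2}$-a.e.\ off $\Lambda\times\Lambda$), which is what justifies viewing $\rho(t_1)$ as an operator norm on the common space $L^2(\XX,\mu_{t_2})$; the paper simply writes ``letting $\mu:=\mu_{t_2}$'' and proceeds.
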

\noindent{\bf Proof:} Letting $\mu:=\mu_{t_2}$ we have 
\begin{align*}
	|\rho(t_2)-\rho(t_1)| 
	\le& \left (\int_{\XX \times \XX} (\kappa_{t_2}(\bfx,\bfy)-\kappa_{t_1}(\bfx,\bfy))^2 d\mu(\bfx) d\mu(\bfy)\right)^{1/2}\\
	\le& \left (\int_{\XX \times \XX} (\|\beta\|_\infty w(t_2) \tilde w(t_2) |t_2-t_1| )^2 d\mu(\bfx) d\mu(\bfy)\right)^{1/2}\\
	\le& |t_2-t_1| \cdot \|\beta\|_\infty \cdot t_2\|\mvalpha\|_\infty \cdot 6K^2e^{2t_2 K\|\mvgamma\|_\infty},
\end{align*}
where the last inequality  once again follows from \eqref{eqn:bound-yule}.
  This proves the upper bound. 

Next note that, for $\mu\otimes \mu$ a.e. $(\bfx, \bfy)$ such that  $\kappa_{t_1}(\bfx,\bfy) \ne 0$, we have
\begin{align*}
	\frac{\kappa_{t_2}(\bfx,\bfy)}{\kappa_{t_1}(\bfx,\bfy)} 
	=& 1 + \frac{\int_{t_1}^{t_2} w(u) \tilde w(u) \beta(u) du}{\int_{0}^{t_1} w(u) \tilde w(u) \beta(u) du}\\
	\ge& 1 + \frac{w(t_1) \tilde w(t_1) \inf_{t_1 \le u \le t_2}\beta(u) \cdot (t_2-t_1)}{ w(t_1) \tilde w(t_1) \|\beta\|_\infty t_1}.
\end{align*}
Thus $\kappa_{t_2}(\bfx,\bfy) \ge \left(1 + |t_2-t_1| \cdot \frac{\inf_{t_1 \le u \le t_2}\beta(u)}{t_1\|\beta\|_\infty}\right) \kappa_{t_1}(\bfx,\bfy)$ which from Lemma \ref{lemma:basic-operator-norm} (b) implies
\begin{equation*}
	\rho(t_2) - \rho(t_1) \ge |t_2-t_1| \cdot \frac{\inf_{t_1 \le u \le t_2}\beta(u)}{t_1\|\beta\|_\infty} \cdot \rho(t_1).
\end{equation*}
This completes the proof of the lower bound. \qed

\textbf{Proof of Proposition \ref{lemma:error-norm-t}:}  
Since $\kappa_0 = 0$, the property $\rho(0) =0$ is immediate.  Also Lemma \ref{lemma:der-norm-t} shows that $\rho$ is continuous and strictly increasing.
Finally since $\inf_{t\to \infty} \beta(t) > 0$, there exists $\delta > 0$ and a $t^* \in (0, \infty)$ such that for all $t \ge t^*$, $\beta(t) \ge \delta$.
From Lemma \ref{lemma:der-norm-t} we then have, for $t \ge t^*$,
$\rho(t) - \rho(t^*) \ge \frac{(t-t^*) \delta}{t^* ||\beta||_{\infty}}$.  This proves that $\rho(t) \to \infty$ as $t\to \infty$
and completes the proof of (i).  Part (ii) is immediate from (i).
% 
% 
% 
% 
% Part(ii) of the Proposition follows from the above Lemma \ref{lemma:der-norm-t}. Let us now prove (i).  The fact that $\rho(0) =0$ follows by the definition of the kernel $\kappa_t$.  Since we assume $\beta(t)> 0$ for $t> 0$, Lemma \ref{lemma:der-norm-t} implies $\rho(t)$ is a continuous monotonically increasing function. All we are left to show is $\rho(t) \to \infty$ when $t \to \infty$. 
% 
% Without loss of generality assume $\|\beta\|_\infty = 1$ and let $\liminf_{t\to\infty}\beta(t) = \alpha > 0$. In particular this implies that there exists $t_0> 0$ such that for all $t> t_0$, $\beta(t)> \alpha/2$. Now the left hand side of the inequality in Lemma \ref{lemma:der-norm-t} gives for $t_2> t_1 \in [t_0,\infty)$ 
% \[\frac{\rho(t_2) - \rho(t_1)}{t_2- t_1} \geq \frac{\alpha}{2t_1}\]
% % Let $|t_1-t_2| \to 0$ in the left-hand-side bound of Lemma \ref{lemma:der-norm-t}. 
% Comparison results from differential equations immediately imply that for $t\geq t_0$, $\rho(t) \geq y(t)$ where $y(t)$ satisfies the differential equation  
% $$ \frac{dy}{dt} = \frac{\alpha y(t)}{2t},\qquad t> t_0,$$
% with initial condition $y(t_0) = \rho(t_0)$. 
% This differential equation has the explicit solution
% $$ y(t) = \rho(t_0) \exp \left(\int_{t_0}^t \frac{\alpha}{2s} ds\right), \qquad t\geq  t_0. $$
% In particular, $y(t)\to\infty$ as $t\to\infty$ and this implies $\rho(t)\to\infty$ as well. This completes the proof. 
\qed

% Then we can take a fixed $t_0$ such that $\rho(t_0) > 0$, and for $t > t_0$
% $$ \rho(t) \ge \rho(t_0)  \cdot \exp\set{ \int_0^t \frac{b(s)}{s} ds}.$$
% The right-hand-side of the above expression does go to $\infty$ given $\liminf_{t \to \infty} b(t) > 0$. The proof is then completed.\qed\\

\subsection{Operator norm of $\RG(\va,b,\vc)$ and critical time of $\BS$}
\label{sec:proof-phase-transition}
In this section we will prove Theorem \ref{theo:new-crit-time}.
Recall that by Lemma \ref{lemma:bsr-to-irg}, for any fixed time $t$, $\BS^*(t)$ (more precisely, $\mirror(t)$) can be approximated by perturbations of $\RG(\va,b,\vc)(t)$. To estimate the volume of the largest component in  $\RG(\va,b,\vc)(t)$ we will use Lemma \ref{lemma:irg-unbounded}.
In order to identify suitable $\Lambda_n$ as in part (ii) of the lemma, we start with the following lemma.

% Fix $0< t< T$ and consider the inhomogeneous random graph $\RG(\mvalpha,\beta,\mvgamma)$ driven by rate functions $\mvalpha,\beta,\mvgamma$ satisfying assumption \eqref{eqn:rate-assump} with type measure $\mu_t = \mu_t(\mvalpha,\beta,\mvgamma)$ used to construct $\RG(\mvalpha,\beta,\mvgamma)$.  % Define 
% $$ Y_n(t) \stackrel{\sss def}{=} \sup_{(s,w) \in \PP_n} w(t).$$
\begin{Lemma}
\label{lemma:error-wst}
Let $(\mvalpha,\beta,\mvgamma)$ be rate functions and let $\mu_t$ be the associated type measure.  Fix $T>0$. Define
$\Lambda \in \clb(\XX)$ as $\Lambda = \{(s,w) \in \XX: w(T) \le l\}$ for $l \in \Rbold_+$.  Then, for every $l \in \Rbold_+$
\begin{equation*}
\mu_t( \Lambda^c )<  2T\|\mvalpha\|_\infty  \cdot  \exp\left(-l\frac{e^{-TK \|\mvgamma\|_\infty }}{2K}\right).
\end{equation*}
%where recall $\|\mvalpha\|_\infty :=  \sup_{0 \le s \le T } \sum_{i=1}^K |\alpha_i(s)|$ and $\|\mvgamma\|_\infty := \sup_{0 \le s \le T } \sum_{i=1}^K |\gamma_i(s)|$.
\end{Lemma}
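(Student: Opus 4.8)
The plan is to compute $\mu_t(\Lambda^c)$ directly from the representation \eqref{eqn:def-mu-gen} of the type measure and then reduce everything to the tail estimate for the attachment process supplied by Lemma~\ref{lemm:yule-domn}(ii). Taking $f(s,w) = \indic{(s,w)\in\Lambda^c}$ in \eqref{eqn:def-mu-gen} gives
\[
\mu_t(\Lambda^c) = \sum_{i=1}^K \int_0^t \alpha_i(u)\,\nu_{u,i}\!\left(\{w: w(T) > l\}\right) du .
\]
So the whole statement rests on bounding, uniformly over $1\le i\le K$ and over $u$ in the integration range, the one-dimensional tail $\nu_{u,i}(w(T) > l)$.

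First I would dispose of the contributions with $u > T$ (present only when $t > T$): under $\nu_{u,i}$ the path $w$ vanishes on $[0,u)$ and is non-decreasing thereafter, since by \eqref{eqn:attachment-operator} all jumps are positive; hence $w(T) = 0 \le l$ and those integrands vanish. For $u \in [0,T]$, I would invoke Lemma~\ref{lemm:yule-domn}(ii) \emph{with its time parameter set equal to the horizon $T$} (and its starting time equal to $u$, which is legitimate because $u \le T$), obtaining
\[
\nu_{u,i}\!\left(w(T) > l\right) \le 2\bigl(1 - e^{-TK\|\mvgamma\|_\infty}\bigr)^{l/2K}, \qquad u\in[0,T],\ 1\le i\le K .
\]
Plugging this in, using $\sum_{i=1}^K \alpha_i(u) \le \|\mvalpha\|_\infty$ for every $u$, and integrating over $u\in[0,t\wedge T]$ yields
\[
\mu_t(\Lambda^c) \le 2T\|\mvalpha\|_\infty \bigl(1 - e^{-TK\|\mvgamma\|_\infty}\bigr)^{l/2K},
\]
and the proof finishes by the elementary bound $1 - x \le e^{-x}$ applied with $x = e^{-TK\|\mvgamma\|_\infty}$, which turns the right-hand side into $2T\|\mvalpha\|_\infty \exp\!\bigl(-l\,e^{-TK\|\mvgamma\|_\infty}/(2K)\bigr)$; the strict inequality in the statement comes from the strictness of $1-x < e^{-x}$ for $x>0$.

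Frankly there is no genuine obstacle in this lemma — it is a short consequence of Lemma~\ref{lemm:yule-domn}. The only two points deserving a word of care are the bookkeeping of time variables (matching the ``$t$'' of Lemma~\ref{lemm:yule-domn}(ii) with the fixed horizon $T$ rather than with the immigration time $u$ indexing $\nu_{u,i}$), and the observation that the slices with $u > T$ contribute nothing because the corresponding immigrants have not yet appeared by time $T$.
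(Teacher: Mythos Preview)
Your argument is correct and follows essentially the same route as the paper: expand $\mu_t(\Lambda^c)$ via \eqref{eqn:def-mu-gen}, bound each $\nu_{u,i}(w(T)>l)$ by the Yule tail estimate of Lemma~\ref{lemm:yule-domn}(ii), and finish with $1-x\le e^{-x}$. Your treatment is in fact slightly more careful than the paper's, which tacitly restricts the supremum to $u\in[0,T]$ without commenting on the (vanishing) contribution from $u>T$.
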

\textbf{Proof:} 
Note that
\begin{equation}\label{eq:eq1335}
\mu_t( \Lambda^c ) = \sum_{i=1}^K \int_0^t \alpha_i(u) \nu_{u,i}( \Lambda^c ) \le ||\mvalpha||_{\infty} T \sup_{u \in [0,T]} \sup_{1 \le i \le K}
\nu_{u,i}( \Lambda^c ).\end{equation}
By \eqref{eqn:wt-bd-2-yule}, 
$$
\nu_{u,i}( \Lambda^c ) =\nu_{u,i}(\{ w: w(T) \ge l \}) \le \prob(X_1 +X_2 \ge l/K) \le 2 (1-e^{-TK\|\mvgamma\|_\infty })^{l/2K}.$$
where $X_i$ are iid with  $\mbox{Geom}(e^{-T\|\mvgamma\|_\infty })$ distribution. 
Using this estimate in \eqref{eq:eq1335}, we have
$$
\mu_t( \Lambda^c ) \le ||\mvalpha||_{\infty} T \cdot 2 (1-e^{-TK\|\mvgamma\|_\infty })^{l/2K}.$$
The result follows.
\qed \\ \ \\

We will now use the above lemma along with Lemma \ref{lemma:irg-unbounded} to estimate the largest component in 
$\RG^{\sss(n)}(\mvalpha,\beta,\mvgamma)(t)$.  Recall the notation $\rho_t(\mvalpha,\beta,\mvgamma)$ from Section \ref{sec:proof-norm-abc}.
\begin{Lemma}
	\label{lem:lem1426}
Let $(\mvalpha,\beta,\mvgamma)$ be rate functions  and denote by
 $\cI_1^{\sss \RG}(t)$  the component with the largest volume, with respect to the weight function $\phi_t$, in $\RG^{\sss(n)}(t) := \RG^{\sss(n)}(\mvalpha,\beta,\mvgamma)(t)$. 
Then, for every $t>0$ such that  $\rho_t(\mvalpha,\beta,\mvgamma) < 1$, there exists $A \in (0, \infty)$ such that 
	\[\prob(\vol_{\phi_t}(\cI_1^{\sss \RG}(t)) > A\log^4{n}) \to 0, \mbox{ as } n\to \infty.\] 
\end{Lemma}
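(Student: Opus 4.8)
The plan is to apply Lemma~\ref{lemma:irg-unbounded}(ii) to the inhomogeneous random graph $\RG^{\sss(n)}(t)=\RG^{\sss(n)}(\XX,\mu_t,\kappa_t,\phi_t)$, after choosing a truncation set $\Lambda_n$ whose ``radius'' grows like $\log n$. Fix $t>0$ with $\rho_t(\mvalpha,\beta,\mvgamma)<1$ and let $\KK_t$ be the integral operator associated with $(\kappa_t,\mu_t)$, so that $\|\KK_t\|=\rho_t(\mvalpha,\beta,\mvgamma)$; then $\Delta:=1-\|\KK_t\|>0$ is a constant not depending on $n$ (here $t$ and the rate functions are fixed throughout, so every constant below is legitimate). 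I would first record two elementary facts to be used repeatedly. Since the attachment generator \eqref{eqn:attachment-operator} has only positive jumps, for $\mu_t$--a.e.\ $\bfx=(s,w)$ the path $r\mapsto w(r)$ is nondecreasing, so that $\phi_t(\bfx)=w(t)$ and, for $\bfx=(s,w)$, $\bfy=(r,\tilde w)$, $\kappa_t(\bfx,\bfy)=\int_0^t w(u)\tilde w(u)\beta(u)\,du\le t\|\beta\|_\infty\,w(t)\tilde w(t)$. Also $\mu_t(\XX)=\sum_{i=1}^K\int_0^t\alpha_i(u)\,du\le t\|\mvalpha\|_\infty$.

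Next I would fix a constant $C>2Ke^{tK\|\mvgamma\|_\infty}$ and take $\Lambda_n:=\{(s,w)\in\XX:\ w(t)\le C\log n\}$. On $\Lambda_n$ one has $\sup_{\bfx}|\phi_t(\bfx)|\le C\log n$ and $\sup_{\bfx,\bfy}|\kappa_t(\bfx,\bfy)|\le t\|\beta\|_\infty(C\log n)^2$ by the bounds above, so the quantity $g(n)$ of Lemma~\ref{lemma:irg-unbounded}(ii) satisfies $g(n)\le 8C\log n\,(1+3t^2\|\mvalpha\|_\infty\|\beta\|_\infty C^2(\log n)^2)\le c_1(\log n)^3$ for large $n$ and a constant $c_1$; in particular it is finite, as the lemma requires. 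For the error from truncation, Lemma~\ref{lemma:error-wst} (with $T=t$, $l=C\log n$) gives $n\mu_t(\Lambda_n^c)\le 2t\|\mvalpha\|_\infty\, n^{\,1-Ce^{-tK\|\mvgamma\|_\infty}/(2K)}\to0$, by the choice of $C$.

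Finally, I would fix $D:=t\|\mvalpha\|_\infty+1>\mu_t(\XX)$ and apply Lemma~\ref{lemma:irg-unbounded}(ii) with $m=\lfloor A(\log n)^4\rfloor$ for $A$ chosen large enough (in terms of $\Delta$ and $c_1$) that $\Delta^2 m/g(n)\ge 2\log n$ for all large $n$: since $\vol_{\phi_t}$ is integer-valued,
\[
\prob\bigl(\vol_{\phi_t}(\cI_1^{\sss \RG}(t))>A(\log n)^4\bigr)\ \le\ n\mu_t(\Lambda_n^c)\ +\ 2nD\,e^{-\Delta^2 m/g(n)}\ +\ \prob(|\PP_n|\ge nD).
\]
The first term tends to $0$ as above; the second is at most $2D/n\to0$; and the third tends to $0$ by Chebyshev's inequality, since $|\PP_n|$ is Poisson with mean $n\mu_t(\XX)<n(D-1)$. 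This would complete the proof. The one point that actually matters is the choice of the truncation scale: truncating at $w(t)\le C\log n$ is exactly what keeps $g(n)$ polylogarithmic of order $(\log n)^3$, which in turn is what forces (and is compatible with) the $(\log n)^4$ bound on the largest volume; everything else is routine bookkeeping once Lemmas~\ref{lemma:irg-unbounded} and~\ref{lemma:error-wst} are in hand, and the strict gap $\Delta>0$ provided by $\rho_t<1$ is what makes the exponential estimate effective.
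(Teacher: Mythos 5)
Your proposal is correct and takes essentially the same route as the paper: apply Lemma~\ref{lemma:irg-unbounded}(ii) with the truncation $\Lambda_n=\{(s,w):w(t)\lesssim \log n\}$, bound $g(n)=O((\log n)^3)$ using $\kappa_t(\bfx,\bfy)\le t\|\beta\|_\infty w(t)\tilde w(t)$, control $n\mu_t(\Lambda_n^c)$ via Lemma~\ref{lemma:error-wst}, and then pick $A$ large. The only (minor) difference is that you make the handling of the Poisson tail $\prob(|\PP_n|\ge nD)$ explicit via Chebyshev with $D=t\|\mvalpha\|_\infty+1$, whereas the paper suppresses that term; that is a small improvement in completeness rather than a different argument.
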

\noindent{\bf Proof:} We will use Lemma \ref{lemma:irg-unbounded}(ii). Define
\[\Lambda_n:= \set{(s,w)\in \cX: w(t) < B \log{n}},\] 
where   $B$ will be chosen appropriately later in the proof.   Now consider the function $g(n)$ in Lemma \ref{lemma:irg-unbounded}(ii)
with $\Lambda_n$ defined as above and $(\mu, \phi, \kappa)$ there replaced by
$(\mu_t, \phi_t, \kappa_t)$, where $(\mu_t, \kappa_t)$ is the type measure and kernel associated with $(\mvalpha,\beta,\mvgamma)$.
Note that
 \[\kappa_t(\bfx,\bfy) = \int_0^t \beta(u) w(u) \tilde w(u) du \leq t\|\beta\|_\infty w(t) \tilde w(t)\] 
and therefore 
\begin{equation}
\label{eqn:gn-final-bd}
	g(n) \le 8 B\log{n}(1+ 3\mu_t(\cX) \cdot t\|\beta\|_\infty B^2\log^2{n}).
\end{equation} 
% Write $|\PP_n(t)|$ for the number of vertices in  $\RG^{\sss(n)}(\mvalpha,\beta,\mvgamma)(t)$.  Then $|\PP_n(t)| \sim \mbox{Poisson}(n\theta)$. 
Writing $m_n = A\log^4{n}$,  the bound in Lemma \ref{lemma:irg-unbounded}(ii) then gives
\begin{equation}
\label{eqn:tot-prob-bound}
\prob(\vol_{\phi_t}(\cI_1^{\sss \RG}(t)) > m_n) \leq n\mu_t(\Lambda_n^c) +  2n  \mu_t(\cX)\exp\biggl(-\Delta^2 A \log^4{n}/g(n) \biggr),	
\end{equation}
where $\Delta = 1 - \rho_t(\mvalpha,\beta,\mvgamma) > 0$.
%Since $|\PP_n(t)|\sim \mbox{Poisson}(n\theta)$, the last term goes to zero for any $D> \theta$. 
Using Lemma \ref{lemma:error-wst} with $l= B\log{n}$ gives 
\begin{equation}
	n\mu_t(\Lambda_n^c) \leq n t \|\mvalpha\|_\infty \cdot n^{-B e^{-T\|\mvgamma\|_\infty} /2K} = o(1) \label{eqn:1450}
\end{equation} 
for $B >  2Ke^{T||\mvgamma||_\infty}$. 
%Finally since by \eqref{eqn:gn-final-bd}, $g(n) \leq C \log^3{n}$ for some constant $C= C(B)$. 
Now fix $B>  e^{T\|\mvgamma\|_\infty /2K}$, and choose $A$ large such that  
\[n \mu_t(\cX)\exp\biggl(-\Delta^2 A \log^4{n}/g(n) \biggr) \to 0\]
as $n\to\infty$. The result follows. \qed

%  which gave a different characterization of the critical time in bounded-size rules. The last few sections analyzed the random graph model $\RG(\mvalpha,\beta,\mvgamma)$ with general rate functions. The aim of this Section is to apply these results to the random graph model approximating $\BS$ (or more appropriately $\BS^*$). Recall that by Lemma \ref{lemma:bsr-to-irg}, for any fixed time $t$, $\BS^*(t)$ can be approximated by perturbations of $\RG(\va,b,\vc)(t)$ where the rate functions $\va,b,\vc$  are defined respectively in \eqref{eqn:def-fa-a}\eqref{eqn:def-fb-b} and \eqref{eqn:def-fc-c}, satisfying all the regularity conditions required to use the results in Section \ref{sec:proof-norm-abc} and \ref{sec:proof-norm-t}. In particular, writing $\rho(t) = \|\cK_t(\va,b,\vc)\|$, by Proposition \ref{lemma:error-norm-t} let $t_c^\prime$ be the unique time for which $\rho(t_c^\prime) = 1$.  
% % We shall prove the phase transition of the $\BS^{\sss(n)}(t)$ with respect to the time $t'_c$, which is defined by $\|\kappa_{t'_c} \|=1$. Since $\rho(t)$ increases to $\infty$, such a $t'_c$ exists and is unique. Now we start the proof.\\

\textbf{Proof of Theorem \ref{theo:new-crit-time}:} 
Let, for $t \ge 0$, $(\mu_t, \kappa_t)$ be the type measure and the kernel associated with rate functions $(\va,b,\vc)$.  We will prove Theorem \ref{theo:new-crit-time} with this choice of $(\mu_t, \kappa_t)$.  From Proposition \ref{lemma:error-norm-t} we have that
$\rho(t) = \rho_t(\va,b,\vc)$ is continuous and strictly increasing in $t$ and there is a unique $t_c' \in (0, \infty)$ such that
$\rho(t_c') =1$.  It now suffices to show that: (a) For $t < t_c'$, $|\clc_1(t)|$ (the size of the largest component in $\BS^*(t)$) is $O(\log^4 n)$;
and (b) for $t > t_c'$, $|\clc_1(t)| = \Omega(n)$.

Consider first (a).   Fix $t< t_c^\prime$.
For $\delta > 0$, define rate functions $(\va^+,b^+,\vc^+) = (\va +\delta,b+ \delta,\vc +\delta)$.  Since $\rho(t) <1$, by Proposition 
\ref{lemma:error-norm-abc}, we can choose $\delta$ sufficiently small so that $\rho_t(\va^+,b^+,\vc^+) <1$. Denote $\cI_1^{\sss \RG^+}(t)$ for the component of the largest volume in $\RG^+(t) := \RG(\va^+,b^+,\vc^+)(t)$. From Lemma \ref{lem:lem1426}
there exists $A \in (0, \infty)$ such that
$$\prob (\vol_{\phi_t}(\cI_1^{\sss \RG^+}(t)) > A \log^4 n) \to 0, \mbox{ as } n \to \infty .
$$
Combining this result with Corollary \ref{cor:cor2127}  we see that
$$\prob (\vol_{\phi_t}(\cI_1^{\sss \mirror}(t)) > A \log^4 n) \to 0, \mbox{ as } n \to \infty,
$$
where $\cI_1^{\sss \mirror}(t)$ is the component with the largest volume in $\mirror(t)$.  Part (a) is now immediate from the one to one correspondence between
the components in $\mirror(t)$ and $\BS^*(t)$ (see \eqref{eq:eq1848}).

We now consider (b). Fix $t> t_c^\prime$.  Then
 $\rho(t) > 1$. From Proposition \ref{lemma:error-norm-abc} we can find 
$\delta > 0$ such that $\rho_t(\va^-,b^-,\vc^-) > 1$, where
$(\va^-,b^-,\vc^-) = ((\va-\delta)^+, (b-\delta)^+, (\vc-\delta)^+)$.
%For a component $\CC$ in $\RG^{-}\equiv\RG^{\sss(n)}(\va^-,b^-,\vc^-)(t)$, write $|\CC|^*$ for the number of vertices in $\CC$.
Let $\CC_1^{\sss \RG^-}(t)$ be the component in $\RG^{-}(t):=\RG^{\sss(n)}(\va^-,b^-,\vc^-)(t)$ with the largest number of vertices. By Theorem 3.1 of \cite{bollobas-riordan-janson}, $|\CC_1^{\sss \RG^-}(t)| = \Theta(n)$.
Since $\vol_{\phi_t}(\CC_1^{\sss \RG^-}(t)) \ge |\CC_1^{\sss \RG^-}(t)|$, we have
$\vol_{\phi_t}(\cI_1^{\sss \RG^-}(t)) = \Omega(n)$, where $\cI_1^{\sss \RG^-}(t)$ is the component with the largest volume in $\RG^{-}(t)$.
Finally, in view of Corollary \ref{cor:cor2127}, we have the same result with $\cI_1^{\sss \RG^-}(t)$ replaced by $\cI_1^{\sss \mirror}(t)$ and the result follows once more from the 
one to one correspondence between
the components in $\mirror(t)$ and $\BS^*(t)$. \qed
% 
%  Let $\kappa^{\sss (n),-}_t$ be the kernel
% \begin{equation*}
% 	\kappa_t^{\sss (n),-}(\bfx, \bfy)=\kappa_{t}^{\sss (n),-}((s,w),(r,\tilde w))= n \left( 1-\exp\set{-\frac{1}{n}\int_0^t w(u) \tilde w(u) (b(u)-\delta)^+ du  } \right).
% \end{equation*}
% Define $\mu^-_t$, $\phi^-_t$ similarly from \eqref{eqn:def-mu} and \eqref{eqn:def-phi}. This falls into the super-critical regime for IRG model (see Theorem 3.1 of \cite{bollobas-riordan-janson}, which studied the size of the largest component with the default weight function $\phi(\bfx) \equiv 1$), thus $\RG^{\sss (n)}(\mu^-_t, \kappa^{\sss (n),-}_t)$ contains a component of $\Theta(n)$ vertices with high probability. Since the weight function is always $\ge 1$, thus the volume of the largest component in $\RG^{\sss (n)}(\mu^-_t, \kappa^{\sss (n),-}_t, \phi^-_t)$ is of the order $\Theta(n)$. 
% 
% Go back to the BSR model, by Lemma \ref{lemma:bsr-to-irg}, $\BS^{\sss (n)}_t$ dominates $\RG^{\sss (n)}(\mu^-_t, \kappa^{\sss (n),-}_t, \phi^-_t)$ with high probability, thus $\BS^{\sss (n)}_t$ also has a component of size $\Theta(n)$ with high probability. Thus the super-critical regime of $\BS^{\sss (n)}_t$ for $t > t'_c$ is established. The proof is completed. \qed\\
% 
% The following is an immediate corollary of Theorem \ref{theo:new-crit-time}.
% \begin{Corollary}
% 	For the critical time $t_c$ of BSR process $\BS^{\sss(n)}(t)$, and the associated operator $\kappa_t$, we have $\|\kappa_{t_c}\|=1$.
% \end{Corollary}

\subsection{Barely subcritical regime for Bounded-size rules}
\label{sec:ia-largest-com}
In this section we  complete the proof of Theorem \ref{thm:subcrit-reg}. Throughout this section we fix $\gamma \in (0,1/4)$ and let $t_n = t_c - n^{-\gamma}$. 
% In this section we will study the size of the largest component $\BS_t^{\sss (n)}$ for all $t < t_n = t_c-n^{- \gamma}$, where $\gamma \in (0,1/4)$ and thusly prove Theorem \ref{thm:subcrit-reg}. The proof is quite similar to the proof for the sub-critical case of Theorem \ref{thm:subcrit-reg} in the previous section, however, we need a much finer estimate of the bounds since $t_n $ is also changing with $n$.\\
The main ingredient in the proof is the  following proposition.
\begin{Proposition}
	\label{prop:size-first-com}
	 There exist  $\bar B,\bar C,\bar N \in (0, \infty)$ such that for all $n \ge \bar N$ and  all  $0 \le t \le t_n$ 
	$$ \prob \{ |\CC_1^{\sss (n)}(t)| \ge \bar m(n,t) \} \le \frac{\bar C}{n^2}, \mbox{ where } \bar m(n,t) =  \frac{\bar B (\log n)^4}{(t_c-t)^2} .$$
\end{Proposition}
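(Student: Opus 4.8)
The plan is to read off the estimate from the inhomogeneous-random-graph sandwich of Corollary~\ref{cor:cor2127}, combining a quantitative lower bound for the spectral gap of the limiting integral operator with the tail bound of Lemma~\ref{lemma:irg-unbounded}(ii). Fix $T=t_c$ and write $\rho(t)=\rho_t(\va,b,\vc)$, which by Theorem~\ref{theo:new-crit-time} is continuous, strictly increasing, and satisfies $\rho(t_c)=1$. The first step is a linear lower bound on the gap: there is a constant $c_0>0$ with $1-\rho(t)\ge c_0(t_c-t)$ for all $t\le t_n$ (we may assume $n\ge\bar N$ so that $t_n>t_c/2$). For $t\in[t_c/2,t_c]$ this follows from the lower bound in Lemma~\ref{lemma:der-norm-t} with $(t_1,t_2)=(t,t_c)$: the continuous function $b$ is bounded below by a positive constant on $[t_c/2,t_c]$ (Proposition~\ref{prop:error-abc}(a)), $\rho(t)\ge\rho(t_c/2)>0$, and $t\|b\|_\infty\le t_c/2$ (Proposition~\ref{prop:error-abc}(b)), so $1-\rho(t)=\rho(t_c)-\rho(t)$ is at least a constant multiple of $t_c-t$; for $t<t_c/2$ one simply uses $1-\rho(t)\ge 1-\rho(t_c/2)>0$ together with $t_c-t\le t_c$.

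The crux is the choice of the coupling scale. Fix $\delta\in(2\gamma,1/2)$ --- possible precisely because $\gamma<1/4$ --- set $\eps_n=n^{-\delta}$, and let $\RG^+(t)=\RG(\va^+,b^+,\vc^+)(t)$ be as in Lemma~\ref{lemma:bsr-to-irg} for this $\eps_n$, with $(\mu_t^+,\kappa_t^+)$ the associated type measure and kernel. By Proposition~\ref{lemma:error-norm-abc} (applicable for $n$ large, since $\|\va+\eps_n\|_\infty\le 1/2+K\eps_n<1$ and similarly for $b,\vc$),
\[ \rho_t(\va^+,b^+,\vc^+)\ \le\ \rho(t)+C_5\,\delta^2\, n^{-\delta/2}(\log n)^2 . \]
Since $\delta/2>\gamma$ the perturbation term is $o(n^{-\gamma})$, hence $o(t_c-t)$ uniformly over $t\le t_n$ because $t_c-t\ge n^{-\gamma}$; so for all large $n$ and all $t\le t_n$ the integral operator $\cK_t^+$ associated with $(\kappa_t^+,\mu_t^+)$ has spectral gap $\Delta_t:=1-\rho_t(\va^+,b^+,\vc^+)\ge\tfrac{1}{2} c_0(t_c-t)>0$.

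Now I would repeat the argument of Lemma~\ref{lem:lem1426} for $\RG^+(t)$, tracking $\Delta_t$. Apply Lemma~\ref{lemma:irg-unbounded}(ii) with $\Lambda_n=\{(s,w)\in\cX:w(t)<B\log n\}$: on $\Lambda_n$ one has $\phi_t=w(t)<B\log n$ and $\kappa_t^+\le t_c(B\log n)^2$, while $\mu_t^+(\cX)=\sum_{i}\int_0^t\alpha_i^+(u)\,du$ is bounded by a constant $C$ uniformly in $t\le t_c$ and $n$, so $g(n)\le C_9(\log n)^3$. Taking $m=\bar m(n,t)=\bar B(\log n)^4/(t_c-t)^2$ (rounded to an integer, which costs only a factor) yields $\Delta_t^2 m/g(n)\ge (c_0^2\bar B/4C_9)\log n$, so $2nD\exp(-\Delta_t^2 m/g(n))\le 2D\,n^{1-c_0^2\bar B/4C_9}\le n^{-2}$ once $\bar B$ is large; moreover $n\mu_t^+(\Lambda_n^c)\le 2t_c n^{1-Bc_2}\le n^{-2}$ for $B$ large (Lemma~\ref{lemma:error-wst}, using that $w$ is nondecreasing, so $\{w(t)\ge B\log n\}\subset\{w(t_c)\ge B\log n\}$), and $\prob(|\PP_n^+|\ge nD)\le e^{-cn}\le n^{-2}$ with $D=2C$ since $|\PP_n^+|$ is Poisson with mean at most $Cn$. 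Hence $\prob\{\vol_{\phi_t}(\cI_1^{\sss\RG^+}(t))\ge\bar m(n,t)\}\le 3n^{-2}$. Finally, for $n$ large $\bar m(n,t)\ge\bar B(\log n)^4/t_c^2>K$, so every component of $\BS^{\sss(n)}(t)$ of size $\ge\bar m(n,t)$ lies in $\BS^*(t)$ and by \eqref{eq:eq1848} has size equal to the $\phi_t$-volume of the corresponding component of $\mirror(t)$; thus $\{|\CC_1^{\sss(n)}(t)|\ge\bar m(n,t)\}\subset\{\vol_{\phi_t}(\cI_1^{\sss\mirror}(t))\ge\bar m(n,t)\}$, and Corollary~\ref{cor:cor2127} gives $\prob\{|\CC_1^{\sss(n)}(t)|\ge\bar m(n,t)\}\le 3n^{-2}+C_3 e^{-C_4 n^{1-2\delta}}\le\bar C/n^2$ for $n\ge\bar N$, since $1-2\delta>0$.

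The main obstacle is the balance of scales in the second paragraph. Lemma~\ref{lemma:bsr-to-irg} only couples $\BS^*(t)$ with $\RG^\pm(t)$ up to a perturbation of the rate functions of size $\eps_n$, which translates (via Proposition~\ref{lemma:error-norm-abc}) into a perturbation $\sqrt{\eps_n}(\log n)^2$ of the operator norm; this has to be smaller than the gap $1-\rho(t)$, of order $t_c-t$, which at $t=t_n$ is only $n^{-\gamma}$. Reconciling this with the requirement $\delta<1/2$ --- needed so that the coupling-failure probability $e^{-C_4 n^{1-2\delta}}$ dominates $n^{-2}$ --- forces $2\gamma<\delta<1/2$, which is exactly why one must assume $\gamma<1/4$. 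The remaining work is routine bookkeeping; the one genuine quantitative input is that the power $(\log n)^4$ in $\bar m(n,t)$ exceeds by one the power $(\log n)^3$ produced by $g(n)$ in Lemma~\ref{lemma:irg-unbounded}(ii), which is what turns the exponential term into a negative power of $n$.
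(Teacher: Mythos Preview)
Your proof is correct and follows essentially the same approach as the paper. The paper packages your spectral-gap estimate (the first two paragraphs) into a separate Lemma~\ref{lemma:norm-ia}, parameterized as $\delta_n=n^{-2\gamma_0}$ with $\gamma_0\in(\gamma,1/4)$ --- exactly your $\delta=2\gamma_0\in(2\gamma,1/2)$ --- and then applies Lemma~\ref{lemma:irg-unbounded}(ii) and the coupling in the same way; your inline derivation of the linear gap bound $1-\rho(t)\ge c_0(t_c-t)$ via the split at $t_c/2$ fills in a step the paper leaves implicit.
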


Let us first prove Theorem \ref{thm:subcrit-reg} assuming the above proposition.\\

\noindent {\bf Proof of Theorem \ref{thm:subcrit-reg}:}
Write $\tau = \inf\{ t \ge 0: |\CC_1^{\sss (n)}(t)| \ge m(n,t)  \}$, where $m(n,t) =  \frac{2\bar B (\log n)^4}{(t_c-t)^2}$. Then
\begin{align}\label{eq:eq1241}
	\prob\{ |\CC_1^{\sss (n)}(t)| \ge m(n,t) \mbox{ for some } t \le t_n \}=  \prob\{ \tau \le t_n \}.
	% = \int_0^{t_n} \prob\{ \tau \in (t,t+dt] \}
	%\le \sum_{v\in [n]} \int_0^{t_n} \prob\{ \CC^{v,\sss (n)}(s) \ge m(n,s) \}ds \le n t_c \sup_{v\in [n], t \le t_n} \prob\{ \CC^{v,\sss (n)}(t) \ge m(n,t)\}.
\end{align}
Note that 
\begin{equation}\label{eq:eq1243}
	\{ \tau =t \} \subset \bigcup_{v,v' \in [n], v\neq v'} E^{v,v'},
\end{equation}
where, denoting the component in $\BS(t)$ that contains the vertex $v \in [n]$ by $\CC_{v}^{\sss (n)}(t)$ and its size by $|\CC_{v}^{\sss (n)}(t)|$,
\begin{align}
E^{v,v'} =& \left\{\max\set{|\CC_{v}^{\sss (n)}(t-)|, |\CC_{v'}^{\sss (n)}(t-)|} < m(n,t); \CC_{v}^{\sss (n)}(t-) \neq \CC_{v'}^{\sss (n)}(t-) \right\}\nonumber\\
&\;\;\;\bigcap \left\{|\CC_{v}^{\sss (n)}(t-)| + |\CC_{v'}^{\sss (n)}(t-)| \ge m(n,t)\right\}
\bigcap \{\CC_{v}^{\sss (n)}(t) = \CC_{v'}^{\sss (n)}(t)\}.	\label{eq:eq1247}
	\end{align}
Note  that
\begin{equation}\label{eq:eq1255}
\prob\{|\CC_{v}^{\sss (n)}(t)| + |\CC_{v'}^{\sss (n)}(t)| \ge m(n,t)\} \le 2 \prob\{  |\CC^{\sss (n)}_1(t)|  \ge m(n,t)/2\}
\end{equation}
and, on the set, $\{\max\set{|\CC_{v}^{\sss (n)}(t)|, |\CC_{v'}^{\sss (n)}(t)|} < m(n,t)\}$, the rate at which
$\CC_{v}^{\sss (n)}(t)$ and $\CC_{v'}^{\sss (n)}(t)$ merge can be bounded by
$$\frac{1}{2n^3} \cdot 4 |\CC_{v}^{\sss (n)}(t)|  | \CC_{v'}^{\sss (n)}(t)| n^2  \le \frac{2 m^2(n,t)}{n}.$$
Combining this observation with \eqref{eq:eq1243} and \eqref{eq:eq1255}, we have
% (i) There exist vertex $v, \tilde v$ such that $\CC_{v}^{\sss (n)}(t)$ and $\CC^{\sss \tilde v,(n)}(t)$ merge during time $(t,t+dt]$ and $\max\set{\CC_{v}^{\sss (n)}(t), \CC^{\sss \tilde v,(n)}(t)} < m(n,t)$. By simply counting the number of related quadruples, one can see that the probability of this event can be bounded by $$\frac{1}{2n^3} \cdot 4 \CC_{v}^{\sss (n)}(t) \CC^{\sss \tilde v,(n)}(t) n^2 dt \le \frac{2 m^2(n,t)}{n} dt.$$\\
% (ii) $\CC_{v}^{\sss (n)}(t) + \CC^{\sss \tilde v,(n)}(t) \ge m(n,t)$. The probability of this event can be bounded by $2 \prob\{  \CC_{v}^{\sss (n)}(t)  \ge m(n,t)/2\}$.\\
% 
% Since there are $n^2$ different pairs $(v,\tilde v)$, thus
\begin{align*}
	\prob\{ \tau \le t_n \} \le& \sum_{v,v' \in [n], v\neq v'}\int_0^{t_n} \prob\{|\CC_{v}^{\sss (n)}(t)| + |\CC_{v'}^{\sss (n)}(t)| \ge m(n,t)\}\cdot \frac{2 m^2(n,t)}{n} dt\\
	\le& 2n^2 \int_0^{t_n}  \prob\{  |\CC^{\sss (n)}_1(t)|  \ge m(n,t)/2\} \cdot \frac{2 m^2(n,t)}{n} dt\\
	\le& 4n t_c \sup_{t \le t_n} \set{m^2(n,t)\prob\{  |\CC^{\sss (n)}_1(t)|  \ge \bar m(n,t)\} }\\
	=& O(n \cdot n^{4\gamma} (\log n)^8 \cdot n^{-2}) = O(n^{-1+4\gamma}(\log n)^8 )=o(1),
\end{align*}
where the last line follows from Proposition \ref{prop:size-first-com} and the fact that $\gamma < 1/4$. 
Using the above estimate in \eqref{eq:eq1241} we have the result.
 \qed\\

We will need the following lemma in the proof of  Proposition \ref{prop:size-first-com}.\\

\begin{Lemma}
	\label{lemma:norm-ia}
Let $(\va^+,b^+,\vc^+) = (\va +\delta_n, b + \delta_n, \vc + \delta_n)$, where
$\delta_n = n^{-2\gamma_0}$ and $\gamma_0 \in (\gamma , 1/4)$.
 Let $\rho_t^{\sss (n), +} = \rho_t(\va^+,b^+,\vc^+)$.   Then there exists  $C_9, N_0 \in (0,\infty)$ such that for all   $n \ge N_0$,
	$$ \rho_t^{\sss (n),+} < 1- C_9 (t_c-t) \text{ for all }  0 \le t \le t_n. $$
\end{Lemma}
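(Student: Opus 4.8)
The plan is to reduce the inequality to two inputs: a \emph{deterministic} linear lower bound on $1-\rho(t)$ near $t_c$, where $\rho(t):=\rho_t(\va,b,\vc)$, and the perturbation estimate of Proposition~\ref{lemma:error-norm-abc}. By Theorem~\ref{theo:new-crit-time} we have $\rho(t_c)=1$ (this is exactly the identification, carried out there, of the critical time produced by Proposition~\ref{lemma:error-norm-t} with the Spencer--Wormald critical time $t_c$), and $\rho$ is continuous and strictly increasing with $\rho(0)=0$.

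\textbf{Step 1: deterministic bound.} I would first show there is $c_0>0$ with $1-\rho(t)\ge c_0(t_c-t)$ for all $t\in[0,t_c]$. For $t\in[t_c/2,t_c]$, apply the left-hand inequality of Lemma~\ref{lemma:der-norm-t} with $t_1=t$, $t_2=t_c$, $\beta=b$, using $\rho(t_c)=1$:
\[
  1-\rho(t)\ \ge\ (t_c-t)\,\frac{\inf_{t\le u\le t_c}b(u)}{t\,\|b\|_\infty}\,\rho(t)\ \ge\ (t_c-t)\,\frac{\min_{u\in[t_c/2,t_c]}b(u)}{t_c\,\|b\|_\infty}\,\rho(t_c/2),
\]
where $\min_{u\in[t_c/2,t_c]}b(u)>0$ by Proposition~\ref{prop:error-abc}(a) and continuity of $b$ on $(0,\infty)$, and $\rho(t_c/2)>\rho(0)=0$ by strict monotonicity (Proposition~\ref{lemma:error-norm-t}(i)). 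For $t\in[0,t_c/2)$ use strict monotonicity directly: $1-\rho(t)\ge 1-\rho(t_c/2)\ge \frac{1-\rho(t_c/2)}{t_c}(t_c-t)$, since $t_c-t\le t_c$. Taking $c_0$ to be the minimum of the two constants gives the bound on all of $[0,t_c]$ (the point $t=0$ is covered because $c_0\le \frac{1-\rho(t_c/2)}{t_c}<\frac{1}{t_c}$).

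\textbf{Step 2: perturbation and conclusion.} Once $\delta_n<1/(2K)$, Proposition~\ref{prop:error-abc}(b) gives $\max\{\|\va+\delta_n\|_\infty,\|b+\delta_n\|_\infty,\|\vc+\delta_n\|_\infty\}\le\frac{1}{2}+K\delta_n<1$, so Proposition~\ref{lemma:error-norm-abc} applies with $T=t_c$ and $\eps=\delta_n=n^{-2\gamma_0}$ and yields, uniformly in $t\in[0,t_c]$,
\[
  \rho^{\sss(n),+}_t\ \le\ \rho(t)+C_5\sqrt{\delta_n}\,(-\log\delta_n)^2\ =\ \rho(t)+4\gamma_0^2 C_5\,n^{-\gamma_0}(\log n)^2.
\]
Combined with Step~1, for all $t\in[0,t_c]$, $\rho^{\sss(n),+}_t\le 1-c_0(t_c-t)+C_{10}\,n^{-\gamma_0}(\log n)^2$ with $C_{10}:=4\gamma_0^2 C_5$. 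Finally, for $t\le t_n=t_c-n^{-\gamma}$ we have $t_c-t\ge n^{-\gamma}$, and since $\gamma_0>\gamma$, $C_{10}n^{-\gamma_0}(\log n)^2=C_{10}n^{\gamma-\gamma_0}(\log n)^2\cdot n^{-\gamma}\le\frac{c_0}{2}(t_c-t)$ for all $n\ge N_0$; hence $\rho^{\sss(n),+}_t\le 1-\frac{c_0}{2}(t_c-t)$ on $[0,t_n]$, i.e.\ the lemma holds with $C_9=c_0/2$.

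The only genuine point requiring care is Step~1: one needs a lower bound on $1-\rho(t)$ that is \emph{linear} in $t_c-t$, since the perturbation error $n^{-\gamma_0}(\log n)^2$ must be absorbed into a constant fraction of $t_c-t\ge n^{-\gamma}$, and this works precisely because $\gamma<\gamma_0$. The left-hand inequality of Lemma~\ref{lemma:der-norm-t} furnishes exactly such a bound, the positivity of $b$ on $(0,\infty)$ and the strict monotonicity of $\rho$ providing the positive constants; the remaining manipulations are routine.
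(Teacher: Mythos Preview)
Your proof is correct and follows essentially the same approach as the paper: apply Proposition~\ref{lemma:error-norm-abc} with $\eps=\delta_n=n^{-2\gamma_0}$ to compare $\rho_t^{\sss(n),+}$ with $\rho_t(\va,b,\vc)$, use Lemma~\ref{lemma:der-norm-t} together with $\rho_{t_c}(\va,b,\vc)=1$ to get a linear lower bound on $1-\rho_t(\va,b,\vc)$, and then absorb the perturbation error using $\gamma_0>\gamma$. The paper's proof is terser; in particular it simply asserts the linear bound ``by Lemma~\ref{lemma:der-norm-t}'' without your case split at $t_c/2$, which is needed because the left-hand side of Lemma~\ref{lemma:der-norm-t} carries a factor $\rho(t_1)$ that degenerates as $t_1\to 0$. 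Your Step~1 fills in this detail cleanly.
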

\noindent\textbf{Proof of Lemma \ref{lemma:norm-ia}: } From Proposition \ref{lemma:error-norm-abc}, there is a $d_1 \in (0, \infty)$ 
such that
$$ \rho_t^{\sss (n), +}  \le \rho_t(\va,b,\vc) + d_1 n^{-\gamma_0}  \log^2 n, \mbox { for all } t \le t_c. $$
By Lemma \ref{lemma:der-norm-t} and since $\rho_{t_c}(\va,b,\vc)=1$, there exists  $d_2 \in (0, \infty)$ such that
$$ \rho_t(\va,b,\vc) \le  1 - d_2 (t_c-t), \mbox{ for all } t \le t_n.$$
Thus, since $\gamma < \gamma_0$, we have for some $N_0 > 0$
\begin{align*}	
	\rho_t^{\sss (n), +} \le 1 - d_2 (t_c-t)  + d_1 n^{-\gamma_0} ( \log n)^2 	< 1 - \frac{d_2}{2}(t_c-t),
\end{align*}
for all $n \ge N_0$ and $0 \le t \le t_c-n^{-\gamma}$. The result follows. \qed \\

\textbf{Proof of Proposition \ref{prop:size-first-com}:} Recall the rate functions $(\va,b,\vc)$ introduced in Section \ref{sec:evol-bst-st}. % By Lemma \ref{lemma:bsr-to-irg}, the size of the component contains a given vertex in $\BS_t^{\sss (n)}$ can be approximated by the volume of a random chosen component in $\RG^{\sss (n)}_t(a,b,c)$ with probability proportional to their weights. In order to control the error of approximation rigorously, we slightly enlarge the rate functions $(a,b,c)$ to $(a^+,b^+,c^+)$. 
Choose $\gamma_0\in (\gamma,1/4)$ and let    $(\va^+,b^+,\vc^+)$ be as in Lemma \ref{lemma:norm-ia}.
% \begin{equation}
% 	\label{eqn:abc-plus}
% 	\va^+=(a_i(\cdot)+\delta_n)_{i=1}^K, \;\; b^+ = b(\cdot) + \delta_n, \;\; \vc^+ = (c_i(\cdot)+\delta_n)_{i=1}^K \mbox{ with } \delta_n=n^{-2\gamma_0} 
% \end{equation}
%The above depend on $n$ but we suppress this dependence to simplify notation.  
Fix $t< t_n$ and consider the random graph $\RG^{\sss(n)}(\va^+,b^+, \vc^+)(t)$. From Lemma \ref{lemma:bsr-to-irg},  we can couple $\mirror(t)$ and $\RG^{\sss(n)}(\va^+,b^+, \vc^+)(t)$ such that 
\[\prob(\mirror(t) \subseteq \RG^{\sss(n)}(\va^+,b^+, \vc^+)(t) ) \geq 1-C_3\exp(-C_4 n^{1-4\gamma_0}), \mbox{ for all } t \in [0,T].\]
Recalling the one to one correspondence between components in $\BS^*(t)$ and $\Gamma(t)$, and \eqref{eq:eq1848}, we have
 for any $m\geq 1$, 
\be
 \prob\{ |\CC_1^{\sss (n)}(t)| > m \} \le \prob\{ \vol_{\phi_t}(\cI_1^{\sss \RG^+}(t)) \ge m \} + C_3 \exp\{ - C_4 n^{1-4\gamma_0}\}, \label{eqn:795}
\ee
where $\cI_1^{\sss \RG^+}(t)$ is the component with the largest volume in $\RG^{\sss(n)}(\va^+,b^+, \vc^+)(t)$.
% Then Lemma \ref{lemma:norm-ia} justify the fact that the corresponding component $\CC_{\sss RG}^{\sss(n),+}(t)$ of $\RG^{\sss(n)}_t(a^+,b^+,c^+)$ dominates $\CC_*^{\sss (n)}(t)$ with probability greater than $1 - C_1 \exp\{ -C_2 n^{1-4\gamma_0} \}$. Thus for all $ t \in [0,T] $ and all $m$,
From Lemma \ref{lemma:norm-ia}, there is a $N_0 > 0$ such that
 $\Delta_t^{\sss(n), +} = 1-\rho_t(\va^+, b^+, \vc^+)$ satisfies
\begin{equation}
\label{eqn:delta-diff-1}
	\Delta_t^{\sss(n), +} \geq C_9(t_c-t), \mbox{ for all } t \le t_n, \; n \ge N_0.
\end{equation}
  Using Lemma \ref{lemma:irg-unbounded} and arguing as in equation \eqref{eqn:tot-prob-bound} we have for all $ t \in [0,T] $ and all $m\geq 1$,
\begin{equation}
	\label{eqn:801}
	\prob\{ \vol_{\phi_t}(\cI_1^{\sss \RG^+}(t)) \ge m \} \leq n d_1 \exp\{ - (\Delta^{\sss (n),+}_t)^2 m/ (d_2 \log^3 n )\}  + d_3 n^{-2},
\end{equation}
 where $d_1, d_2, d_3$ are suitable constants. Using \eqref{eqn:delta-diff-1} in \eqref{eqn:801} we get  
% where $\Delta^{\sss (n),+}_t = 1 - \rho_t^{\sss (n), +}$ and $\rho_t^{\sss (n), +}$ is the associated operator norm for $\RG^{\sss (n)}_t(a^+, b^+, c^+)$, where $(a^+,b^+,c^+)$ are as defined in \ref{eqn:abc-plus}. 
\begin{equation*}
	\prob\{ \vol_{\phi_t}(\cI_1^{\sss \RG^+}(t))  \ge m \} \le  nd_1 \exp\{ - d_4 (t_c-t)^2 m/  \log^3 n \}  + d_3 n^{-2}.
\end{equation*}
The result now follows on substituting $ m = m(n,t) =  \frac{\bar B (\log n)^4}{(t_c-t)^2}$, with $\bar B > 3/d_4$, in the above inequality.
%Taking $m= \frac{2C_3 (\log n )^4}{C_5^2 (t_c-t)^2}$ and this completes the proof of Proposition \ref{prop:size-first-com}. 
\qed\\

% By the above lemma we have $\Delta^{\sss (n),+}_t \ge C_5 (t_c-t)$, thus plug this into \eqref{lemma:norm-ia}, we have

\section*{Acknowledgements}

AB and XW has been supported in part by the National Science Foundation (DMS-1004418, DMS-1016441), the Army Research Office (W911NF-0-1-0080, W911NF-10-1-0158) and the US-Israel Binational Science Foundation (2008466). SB and XW have been supported in part NSF-DMS grant 1105581.

%\newpage

%\listoftodos

% \bibliographystyle{plain}
% \bibliography{crit-bib}

% \bib, bibdiv, biblist are defined by the amsrefs package.
\begin{bibdiv}
\begin{biblist}

\bib{achlioptas2009explosive}{article}{
      author={Achlioptas, D.},
      author={D'Souza, R.M.},
      author={Spencer, J.},
       title={{Explosive percolation in random networks}},
        date={2009},
        ISSN={0036-8075},
     journal={Science},
      volume={323},
      number={5920},
       pages={1453},
}

\bib{aldous1997brownian}{article}{
      author={Aldous, D.},
       title={{Brownian excursions, critical random graphs and the
  multiplicative coalescent}},
        date={1997},
     journal={The Annals of Probability},
      volume={25},
      number={2},
       pages={812\ndash 854},
}

\bib{bhamidi-budhiraja-wang2011}{article}{
      author={Bhamidi, S.},
      author={Budhiraja, A.},
      author={Wang, X.},
       title={Aggregation models with limited choice and the multiplicative
  coalescent},
        date={2012},
     journal={To appear in Random Structures and Algorithms},
}

\bib{amc-2012}{article}{
      author={Bhamidi, S},
      author={Budhiraja, A},
      author={Wang, X},
       title={{The augmented multiplicative coalescent and emergence of the
  giant and complexity for dynamic random graph models at criticality}},
        date={2012},
     journal={Arxiv preprint},
}

\bib{bohman2001avoiding}{article}{
      author={Bohman, T.},
      author={Frieze, A.},
       title={{Avoiding a giant component}},
        date={2001},
     journal={Random Structures and Algorithms},
      volume={19},
      number={1},
       pages={75\ndash 85},
}

\bib{bohman2004avoidance}{article}{
      author={Bohman, T.},
      author={Frieze, A.},
      author={Wormald, N.C.},
       title={{Avoidance of a giant component in half the edge set of a random
  graph}},
        date={2004},
        ISSN={1098-2418},
     journal={Random Structures \& Algorithms},
      volume={25},
      number={4},
       pages={432\ndash 449},
}

\bib{bollobas-rg-book}{book}{
      author={Bollob{\'a}s, B{\'e}la},
       title={Random graphs},
     edition={Second},
      series={Cambridge Studies in Advanced Mathematics},
   publisher={Cambridge University Press},
     address={Cambridge},
        date={2001},
      volume={73},
        ISBN={0-521-80920-7; 0-521-79722-5},
      review={\MR{1864966 (2002j:05132)}},
}

\bib{bollobas-riordan-janson}{article}{
      author={Bollob{\'a}s, B{\'e}la},
      author={Janson, Svante},
      author={Riordan, Oliver},
       title={The phase transition in inhomogeneous random graphs},
        date={2007},
        ISSN={1042-9832},
     journal={Random Structures Algorithms},
      volume={31},
      number={1},
       pages={3\ndash 122},
         url={http://dx.doi.org/10.1002/rsa.20168},
      review={\MR{2337396 (2008e:05124)}},
}

\bib{bremaud1981}{book}{
      author={Br{\'e}maud, Pierre},
       title={Point processes and queues},
   publisher={Springer-Verlag},
     address={New York},
        date={1981},
        ISBN={0-387-90536-7},
        note={Martingale dynamics, Springer Series in Statistics},
      review={\MR{636252 (82m:60058)}},
}

\bib{peres-ding}{article}{
      author={Ding, Jian},
      author={Lubetzky, Eyal},
      author={Peres, Yuval},
       title={Mixing time of near-critical random graphs},
        date={2012},
        ISSN={0091-1798},
     journal={Ann. Probab.},
      volume={40},
      number={3},
       pages={979\ndash 1008},
         url={http://dx.doi.org.libproxy.lib.unc.edu/10.1214/11-AOP647},
      review={\MR{2962084}},
}

\bib{er-2}{article}{
      author={Erd{\H{o}}s, P.},
      author={R{\'e}nyi, A.},
       title={On the evolution of random graphs},
        date={1960},
     journal={Magyar Tud. Akad. Mat. Kutat\'o Int. K\"ozl.},
      volume={5},
       pages={17\ndash 61},
      review={\MR{0125031 (23 \#A2338)}},
}

\bib{er-1}{article}{
      author={Erd{\H{o}}s, P.},
      author={R{\'e}nyi, A.},
       title={On the evolution of random graphs},
        date={1961},
     journal={Bull. Inst. Internat. Statist.},
      volume={38},
       pages={343\ndash 347},
      review={\MR{0148055 (26 \#5564)}},
}

\bib{janson1994birth}{article}{
      author={Janson, S.},
      author={Knuth, D.},
      author={Luczak, T.},
      author={Pittel, B.},
       title={{The birth of the giant component, with an introduction by the
  editors}},
        date={1994},
     journal={Random Struct. Alg},
      volume={4},
       pages={231\ndash 358},
}

\bib{janson2010phase}{article}{
      author={Janson, S.},
      author={Spencer, J.},
       title={{Phase Transitions for Modified Erdos--R{\'e}nyi Processes}},
        date={2010},
     journal={Arxiv preprint arXiv:1005.4494},
}

\bib{janson-subcrit}{article}{
      author={Janson, Svante},
       title={The largest component in a subcritical random graph with a power
  law degree distribution},
        date={2008},
        ISSN={1050-5164},
     journal={Ann. Appl. Probab.},
      volume={18},
      number={4},
       pages={1651\ndash 1668},
         url={http://dx.doi.org.libproxy.lib.unc.edu/10.1214/07-AAP490},
      review={\MR{2434185 (2009d:05227)}},
}

\bib{janson-luczak-subcrit}{article}{
      author={Janson, Svante},
      author={Luczak, Malwina~J.},
       title={Susceptibility in subcritical random graphs},
        date={2008},
        ISSN={0022-2488},
     journal={J. Math. Phys.},
      volume={49},
      number={12},
       pages={125207, 23},
         url={http://dx.doi.org.libproxy.lib.unc.edu/10.1063/1.2982848},
      review={\MR{2484338 (2010b:05152)}},
}

\bib{bf-spencer-perkins-kang}{article}{
      author={Kang, M.},
      author={},
      author={Perkins, W},
      author={Spencer, J.},
       title={{The Bohman-Frieze Process Near Criticality}},
        date={2010},
     journal={Arxiv preprint arXiv:1106.0484v1},
}

\bib{poisson-cloning}{incollection}{
      author={Kim, Jeong~Han},
       title={Poisson cloning model for random graphs},
        date={2006},
   booktitle={International {C}ongress of {M}athematicians. {V}ol. {III}},
   publisher={Eur. Math. Soc., Z\"urich},
       pages={873\ndash 897},
      review={\MR{2275710 (2007j:05196)}},
}

\bib{liptser-mart-book}{book}{
      author={Liptser, R.~Sh.},
      author={Shiryayev, A.~N.},
       title={Theory of martingales},
      series={Mathematics and its Applications (Soviet Series)},
   publisher={Kluwer Academic Publishers Group},
     address={Dordrecht},
        date={1989},
      volume={49},
        ISBN={0-7923-0395-4},
         url={http://dx.doi.org/10.1007/978-94-009-2438-3},
        note={Translated from the Russian by K. Dzjaparidze [Kacha
  Dzhaparidze]},
      review={\MR{1022664 (90j:60046)}},
}

\bib{norris-mc}{book}{
      author={Norris, J.~R.},
       title={Markov chains},
      series={Cambridge Series in Statistical and Probabilistic Mathematics},
   publisher={Cambridge University Press},
     address={Cambridge},
        date={1998},
      volume={2},
        ISBN={0-521-48181-3},
        note={Reprint of 1997 original},
      review={\MR{1600720 (99c:60144)}},
}

\bib{pittel-subcrit}{article}{
      author={Pittel, B.~G.},
       title={On the largest component of a random graph with a subpower-law
  degree sequence in a subcritical phase},
        date={2008},
        ISSN={1050-5164},
     journal={Ann. Appl. Probab.},
      volume={18},
      number={4},
       pages={1636\ndash 1650},
         url={http://dx.doi.org.libproxy.lib.unc.edu/10.1214/07-AAP493},
      review={\MR{2434184 (2009e:60020)}},
}

\bib{riordan2011achlioptas}{article}{
      author={Riordan, O.},
      author={Warnke, L.},
       title={{Achlioptas process phase transitions are continuous}},
        date={2011},
     journal={Arxiv preprint arXiv:1102.5306},
}

\bib{riordan2012evolution}{article}{
      author={Riordan, O.},
      author={Warnke, L.},
       title={The evolution of subcritical {A}chlioptas processes},
        date={2012},
     journal={arXiv preprint arXiv:1204.5068},
}

\bib{spencer2007birth}{article}{
      author={Spencer, J.},
      author={Wormald, N.},
       title={{Birth control for giants}},
        date={2007},
     journal={Combinatorica},
      volume={27},
      number={5},
       pages={587\ndash 628},
}

\bib{van2012hypercube}{article}{
      author={van~der Hofstad, R.},
      author={Nachmias, A.},
       title={Hypercube percolation},
        date={2012},
     journal={arXiv preprint arXiv:1201.3953},
}

\end{biblist}
\end{bibdiv}

\end{document}